\numberwithin{equation}{section}
\def\cocoa{{\hbox{\rm C\kern-.13em o\kern-.07em C\kern-.13em o\kern-.15em A}}}
\newtheorem*{mthm}{Main Theorem 1}
\newtheorem*{mthm2}{Main Theorem 2}
\newtheorem{theorem}{Theorem}[section]
\newtheorem{lemma}[theorem]{Lemma}
\newtheorem{proposition}[theorem]{Proposition}
\theoremstyle{definition}
\newtheorem{remark}[theorem]{Remark}
\newtheorem{definition}[theorem]{Definition}
\newtheorem{open problem}{Open Problem}
\newcommand {\sHom}{\mathcal{H}\kern -0.25ex{\mathit om}}
\newcommand {\sExt}{\mathcal{E}\kern -0.25ex{\mathit xt}}
\newcommand {\sTor}{\mathcal{T}\kern -0.25ex{\mathit or}}
\newcommand {\rk}{\mathrm{rk}}
\newcommand {\supp}{\mathrm{Supp}}
\newcommand {\Ext}{\mathrm{Ext}}
\newcommand {\Hom}{\mathrm{Hom}}
\newcommand {\Hilb}{\mathcal{H}\kern -0.25ex{\mathit ilb\/}}
\newcommand{\inext}{{\mathcal E}{\it xt}}
\newcommand {\cU}{\mathcal{U}}
\newcommand{\cE}{{\mathcal E}}
\newcommand{\cEv}{{\mathcal E}^{\vee}}
\newcommand{\cF}{{\mathcal F}}
\newcommand{\cM}{{\mathcal M}}
\newcommand{\cN}{{\mathcal N}}
\newcommand{\cO}{{\mathcal O}}
\newcommand{\cG}{{\mathcal G}}
\newcommand{\cT}{{\mathcal T}}
\newcommand{\cZ}{{\mathcal Z}}
\newcommand{\cI}{{\mathcal I}}
\newcommand {\bZ}{\mathbb{Z}}
\newcommand {\bP}{\mathbb{P}}
\newcommand {\bF}{\mathbb{F}}
\newcommand{\Pic}{\operatorname{Pic}}
\newcommand{\lra}{\longrightarrow}
\def\p#1{{\bP^{#1}}}
\def\H#1{\mathrm{H}^{#1}}
\def\h#1{\mathrm{h}^{#1}}
		\title[Instanton Sheaves on Ruled Fano 3-folds]{Instanton Sheaves on \\ Ruled Fano 3-folds of Picard Rank 2 and Index 1}
		\keywords{Instanton Sheaves, Ulrich Bundles, Fano Threefolds}
		\author[O. Genc]{Ozhan Genc}
		\author[M. Jardim]{Marcos Jardim}
\begin{document}

\begin{abstract}
	We study rank 2 $h$-instanton sheaves on projective threefolds. We demonstrate that any orientable rank 2, non-locally free $h$-instanton sheaf with defect 0 on a threefold can be obtained as an elementary transformation of a locally free $h$-instanton sheaf. Our focus then shifts to ruled Fano threefolds of Picard rank 2 and index 1, of which there are five deformation classes. We establish the existence of orientable rank 2 $h$-instanton bundles on such varieties. Additionally, we prove the existence of Ulrich bundles on such varieties, which correspond to $h$-instanton sheaves of minimum charge.
\end{abstract}

\maketitle

\begin{quote}
	\leftskip=70pt {\it To our friend Gianfranco Casnati}
\end{quote}

\section{Introduction}

In their fundamental work \cite{ADHM}, Atiyah, Drinfeld, Hitchin, and Manin presented the notion of \textit{mathematical instanton bundles} as rank 2 holomorphic vector bundles on $\p3$ that correspond to anti-self-dual connections, a.k.a. instantons, on the 4-dimensional round sphere $S^4$. To be precise, a \textit{mathematical (or complex) instanton bundle} of charge $n$ was defined in \cite[p. 185]{OSSG80} as a stable rank 2 vector bundle $E$ with Chern classes $c_1(E)=0, \ c_2(E)=n$ and such that $\H{i}(E(-2))=0$ for $i=1,2$.

In the following decades, several authors presented different generalizations of mathematical instanton bundles, first to odd-dimensional projective spaces \cite{MCS}, then to non-locally free sheaves of any rank on arbitrary projective spaces \cite{J-inst}, and then to other Fano three-dimensional varieties, see \cites{F,K,CCGM21,CJ}. More recently, Antonelli and Casnati, following up on previous work by Antonelli--Malaspina \cite{AM22}, proposed in \cite{AC23} the notion of \textit{$h$-instanton sheaves} on arbitrary projective varieties.

In another direction, the notion of rank 0 instanton sheaves on $\bP^3$ was initially introduced in \cite[Definition 6.1]{HaLa} and further studied in \cite{GJ} and more recently in \cite{CJ} in the context of Fano threefolds with Picard rank 1. In particular, \cite[Main Theorem]{GJ} provides a precise relation between non-locally free instanton sheaves and rank 0 instanton sheaves: namely, if $\cE$ is a non-locally free rank 2 instanton sheaf on $\bP^3$, then $\cE^{\vee\vee}$ is a locally free rank 2 instanton sheaf and $\cE^{\vee\vee}/\cE$ is a rank 0 instanton sheaf. This result was then generalized to the context of Fano threefolds with Picard rank 1, see \cite[Theorem 24]{CJ}.

More precisely, let $X$ be an irreducible projective 3-dimensional scheme endowed with an ample, globally generated line bundle $\cO_X(h)$. An \emph{$h$-instanton sheaf with defect 0} on $X$ is a torsion-free sheaf $\cE$ satisfying the following cohomological vanishing conditions
$$ \h0 \big(\cE(-h)\big) = \h{1} \big(\cE(-2h)\big)=\h{2} \big(\cE(-2h)\big) = \h{3} \big(\cE(-3h)\big) = 0. $$
This is a special case of the definition introduced in \cite[Definition 1.3 and Theorem 1.4]{AC23}, where the authors also introduce $h$-instantons with defect 1. Following \cites{AC23,AM22,J-inst}, but departing from \cites{CJ,F,K}, the definition here considered does not assume that the $\cE$ is slope-(semi)stable; a comparison between these definitions is considered in Section \ref{sec:h-instantons}.

Our first goal in the present paper is to introduce the concept of \textit{rank 0 $h$-instanton sheaves} on three-dimensional projective varieties: this is a 1-dimensional sheaf $\cT$ satisfying $h^{p}(\cT(-2h))=0$ for all $p$, see Definition \ref{defn: rank 0 instanton}  below. We then generalize the classification of non-locally free rank 2 $h$-instanton sheaves we mentioned above, showing that if $\cE$ is a non-locally free rank 2 $h$-instanton sheaf on $X$, then $\cE^{\vee\vee}$ is a locally free rank 2 $h$-instanton sheaf and $\cE^{\vee\vee}/\cE$ is a rank 0 $h$-instanton sheaf; further details in Theorem \ref{thm:double_dual_bundle} below.

In the second part of the paper, we will focus on the construction of rank 2 $h$-instanton sheaves on ruled Fano threefolds with Picard rank 2 and index 1. These varieties are of the form $X_c:=\bP\big( F_c \big)$ where $F_c$ is one of the rank 2 Fano bundles on $\bP^2$ provided in Table \ref{table: for F}, see \cite[Theorem in p. 296]{SW90}. In addition, we consider a polarization on $X_c$ of the form $h=\xi+f$ where $\xi = c_1(\cO_{X_{c}} (1))$ and $f=c_1(\pi^*\cO_{\bP^2}(1))$ with $X_c\stackrel{\pi}{\to}\p2$ is the projection induced from $F_c\to\p2$; $h$ is ample by \cite[Proposition 2.1]{SW90}. 


\begin{table}[h]
	\caption{Rank 2 Fano bundles on $\p2$ of index 1}
	\begin{tabular}{|c|c|}
		\hline
		$c=c_2$ & $F_c$                                                                                                                         \\ \hline
		0       & $F_0 \simeq \cO_{\p2} \oplus \cO_{\p2}(2)$                                                                                    \\ \hline
		1       & $F_1 \simeq \cO_{\p2}(1) \oplus \cO_{\p2}(1)$                                                                                 \\ \hline
		2       & $0 \rightarrow \cO_{\p2}(1) \rightarrow F_2 \rightarrow \cI_{pt|\p2}(1) \rightarrow 0$                                        \\ \hline
		3       & $0 \rightarrow \mathcal{O}_{\mathbb{P}^{2}}(-1)^2 \rightarrow \mathcal{O}_{\mathbb{P}^{2}}^{4} \rightarrow F_3 \rightarrow 0$ \\ \hline
		4       & $0 \to \mathcal{O}_{\mathbb{P}^{2}}(-2) \to \mathcal{O}_{\mathbb{P}^{2}}^{3} \to F_4 \to 0$                                   \\ \hline
	\end{tabular}
	\label{table: for F}
\end{table}

We recall that there exist two deformation classes of ruled Fano threefolds with Picard rank 2 and index 2, namely the $\bP\big({\rm T}\bP^2\big)$ (which coincides with the full flag manifold $\bF(0,1,2)$) and $\bP\big( \cO_{\p2} \oplus \cO_{\p2}(1)\big)$ (which coincides with the blow-up of $\bP^3$ at a point). Instanton sheaves on such threefolds have been studied in \cite{MML20} and \cite{CCGM21} respectively. Also, $h$-instanton bundles on $X_1:=\bP\big( F_1 \big)$ have been studied in \cite{AM22} by considering $X_1$ as a rational normal scroll.

Let $\mathcal{M}_X(2,c_1,c_2)$ denote the moduli space of $\mu$-stable rank 2 sheaves on a projective variety $X$ with Chern classes $c_1$ and $c_2$; in this paper, we prove the following:

\begin{mthm}\label{mthm}
	For each $\beta\ge2$ and $\alpha\ge6$ when $c\ge1$ and $\alpha\ge7$ when $c=0$, there exist rank 2, orientable, $\mu$-stable $h$-instanton bundles on $(X_c,h)$ with $c_2=\alpha \xi f+\beta f^2$ and charge $3\alpha+\beta+c-21$. Moreover, these instanton bundles are smooth points in a generically smooth irreducible component of dimension $10\alpha+4\beta+4c-21$ in $\mathcal{M}_{X_c}(2,2\xi+3f,\alpha \xi f+\beta f^2)$. 
	When $\alpha=6$, $\beta=2$ such bundles are Ulrich bundles on $X_1$.
\end{mthm}

In the last part of the paper, we focus on the case $\alpha = 5$, which is the minimum possible value, see Lemma \ref{lem:bounds_2ndChern} below. Using a completely different construction, we prove the following result.

\begin{mthm2}\label{2mthm}
	For each $l\ge1$, there exist rank 2, orientable, $\mu$-stable $h$-instanton bundles on $(X_c,h)$ with $c_2(\cG)=5\xi f+(l^2+2l+3-c)f^2$ and charge $(l-1)(l+3)$; in particular, $\cG$ is Ulrich when $l=1$. Moreover, these instanton bundles form a smooth, open subset of an irreducible component of dimension $4l(l+2)$ in $\mathcal{M}_{X_c}(2,2\xi+3f,5\xi f+(l^2+2l+3-c)f^2)$. 
\end{mthm2}

The paper is organized as follows. In Section \ref{sec:h-instantons}, we give the definition and some fundamental properties of $h$-instanton sheaves on threefolds along with an extended definition of rank 0 $h$-instanton sheaves. In Section \ref{sec:elem transf}, we establish the relation between locally free and non-locally free rank 2 orientable $h$-instanton sheaf with defect 0 on a smooth, irreducible projective threefolds. Section \ref{sec:ruled fanos} focuses on the detailed geometry of ruled Fano threefolds of index 1 and Picard rank 2. In Section \ref{sec:instantons on Xc}, we demonstrate the existence of rank 2 orientable $h$-instanton bundles with second Chern class $\alpha \xi f + 2 f^2$ for each $\alpha \ge 6$ when $c\ge1$ and $\alpha\ge7$ when $c=0$. In Section \ref{sec:deformation}, we apply the elementary deformation of the bundles constructed in Section \ref{sec:instantons on Xc} to prove Main Theorem 1. Finally, the last section focuses on the case $\alpha = 5$ and we establish Main Theorem 2, in which the minimum charge case (Ulrich bundles) is covered. We finish the paper with some open problems related to $h$-instanton bundles on $X_c$ which are not covered in this work.

\subsection*{Acknowledgments} The first author is supported by the grants MINIATURA 6 - 2022/06/X/ST1/01758 and MAESTRO NCN - UMO-2019/34/A/ST1/00263 - Research in Commutative Algebra and Representation Theory. The second author is partially supported by the CNPQ grant number 305601/2022-9, the FAPESP Thematic Project 2018/21391-1, and the FAPESP-ANR project 2021/04065-6. The first author extends his gratitude to the Department of Mathematics at the University of Campinas and to the second author for their hospitality during his visit.



\section{$h$-Instanton sheaves on threefolds} \label{sec:h-instantons}

Let $X$ be an irreducible projective scheme of dimension $3$ endowed with an ample, globally generated line bundle $\cO_X(h)$; we will often denote this pair by $(X,h)$. We will adopt the following definition for $h$-instanton sheaves on threefolds; it is a special case of the definition introduced in \cite{AC23}, compare with Definition 1.3 and Theorem 1.4 op. cit. 

\begin{definition}\label{defn:instanton}
	An \emph{$h$-instanton sheaf with defect 0} on $X$ is a torsion-free sheaf $\cE$ satisfying the following cohomological conditions
	\begin{enumerate}
		\item $\h0 \big(\cE(-h)\big)= \h{3} \big(\cE(-3h)\big)=0$;
		\item $\h{1} \big(\cE(-2h)\big)=\h{2} \big(\cE(-2h)\big)=0$;
		\item $\h1\big(\cE(-h)\big)=\h{2}\big(\cE(-3h)\big)$.
	\end{enumerate}
	The integer $k(\cE):=\h1\big(\cE(-h)\big)=\h{2}\big(\cE(-3h)\big)$ is called the \emph{charge} of $\cE$. 
\end{definition}

We will often omit the expression \textit{with defect 0} in the remainder of the text, regarding it as implicit in our results. We remark that, as a consequence of \cite[Definition 1.3 and Theorem 1.4]{AC23}, the following cohomological conditions are also satisfied:
$$ \h0 \big(\cE(-(t+1)h)\big) = \h{3} \big(\cE((t-3)h)\big) = 0 $$
$$ \h1 \big(\cE(-(t+2)h)\big) = \h{2} \big(\cE((t-2)h)\big) = 0 $$
for every $t\ge0$. In particular, $\chi(\cE(-h)\big)=-\h1\big(\cE(-h)\big)=-k(\cE)$. 

\begin{remark}
	Alternatively, one could define the charge of an $h$-instanton sheaf $\cE$ as $c_2(\cE)\cdot h$ or even just $c_2(\cE)$; these are related to $k(\cE)$ via the Grothendieck--Riemann--Roch formula. We opt for defining the charge as above because $k(\cE)\ge0$, and $k(\cE)=0$ if and only if $\cE$ is an Ulrich bundle.
\end{remark}

We note that Definition \ref{defn:instanton} generalizes Faenzi's \cite[Definition 1]{F} and Kuznetsov's \cite[Definition 1.1]{K} definitions for rank 2 instanton bundles on Fano threefolds of Picard rank 1, and further extended by Comaschi and Jardim \cite[Definition 6]{CJ} to encompass non-locally free sheaves of arbitrary rank. More precisely, assume that $X$ is a Fano threefold with $\Pic(X)=\bZ\cdot h$ (where $h$ is the ample generator) and index $i_X=2q_X+r_X$. Recall that an instanton sheaf on $X$, in the sense of \cite[Defintion 6]{CJ}, is a $\mu$-semistable torsion-free sheaf $\cF$ with $c_1(\cF)=-r_X\cdot\rk(\cF)/2$ and $\h1(\cF(-q_X\cdot h))=\h2(\cF(-q_X\cdot h))=0$; in particular, the rank of an instanton sheaf on a Fano threefold of odd index must be even.

Let us now compare \cite[Defintion 6]{CJ} with Definition \ref{defn:instanton}. If $\cF$ is an instanton sheaf, set $\cE:=\cF((2-q_X)\cdot h)$, First, note that 
$$ \h{p}(\cE(-2h)) = \h{p}(\cF(-q_X\cdot h) = 0 ~~ {\rm for} ~~ p = 1,2. $$
The fact that $\cF$ is $\mu$-semistable implies that
$$ \h0(\cE(-h)) = \h0(\cF((1-q_X)\cdot h)) = 0 ~~ {\rm and} ~~ \h3(\cE(-3h)) = \h3(\cF((-1-q_X)\cdot h)) = 0 $$
when $i_X=3,4$. Therefore, $\cE$ is a $h$-instanton sheaf when $X=\p3$ or $X$ is a 3-dimensional quadric in $\p4$. Otherwise, if $i_X=1,2$, then $\mu$-semistability is not enough to guarantee the vanishing in item (1) of Definition \ref{defn:instanton}. On the other hand, if $i_X=2$, then $\mu$-stable instanton sheaves in the sense of \cite[Defintion 6]{CJ} do satisfy item (1) of Definition \ref{defn:instanton}.

Notice that we have not fixed the first Chern class of an $h$-instanton sheaf, as usual in the various definitions of instantons sheaves available in the literature. 

\begin{definition}\label{defn:orientable}
	A torsion-free sheaf $\cE$ on $X$ is said to be \emph{orientable} if $c_1(\cE) = (4h + K_X)\rk(\cE)/2.$
\end{definition}

Note that instanton sheaves in the sense of \cite[Definition 6]{CJ} are orientable. 

\begin{lemma} \label{lem:rk2instanton}
	Let $\cE$ be an orientable, locally free sheaf of rank $2$. $\cE$ is an $h$-instanton if and only if $\h0\big(\cE(-h)\big) = \h{1}\big(\cE(-2h)\big)=0$.
\end{lemma}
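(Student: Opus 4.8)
The forward direction is trivial: if $\cE$ is an $h$-instanton then conditions (1) and (2) of Definition~\ref{defn:instanton} give $\h0(\cE(-h))=\h1(\cE(-2h))=0$ directly. So the content is the converse: given an orientable rank $2$ locally free $\cE$ with $\h0(\cE(-h))=\h1(\cE(-2h))=0$, I must recover the remaining vanishings $\h3(\cE(-3h))=0$, $\h2(\cE(-2h))=0$, and the equality $\h1(\cE(-h))=\h2(\cE(-3h))$. The key tool is Serre duality combined with the orientability hypothesis: since $\cE$ is locally free of rank $2$ with $c_1(\cE)=(4h+K_X)$, we have $\cE^\vee\cong\cE(-4h-K_X)\otimes(\det\cE)^\vee\otimes\det\cE$... more precisely $\cE^\vee\cong\cE\otimes(\det\cE)^{-1}$, and $\det\cE=\cO_X(4h+K_X)$, so $\cE^\vee\cong\cE(-4h-K_X)$. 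Hence $\cE^\vee\otimes\omega_X\cong\cE(-4h)$.

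First I would use Serre duality: $\h3(\cE(-3h))=\h0\big(\cE^\vee(3h)\otimes\omega_X\big)^\vee=\h0\big(\cE(-4h)(3h)\big)^\vee=\h0(\cE(-h))^\vee=0$, which is exactly the hypothesis. This disposes of the first half of condition (1). Next, for condition (2)'s second vanishing, Serre duality gives $\h2(\cE(-2h))=\h1\big(\cE^\vee(2h)\otimes\omega_X\big)^\vee=\h1\big(\cE(-4h)(2h)\big)^\vee=\h1(\cE(-2h))^\vee=0$, again immediate from the hypothesis. Finally, for condition (3), Serre duality yields $\h2(\cE(-3h))=\h1\big(\cE^\vee(3h)\otimes\omega_X\big)^\vee=\h1\big(\cE(-4h)(3h)\big)^\vee=\h1(\cE(-h))^\vee$, so $\h1(\cE(-h))=\h2(\cE(-3h))$ automatically. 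Thus all four conditions of Definition~\ref{defn:instanton} follow from the two assumed vanishings together with orientability and the rank $2$ locally free condition (which makes $\cE^\vee\cong\cE\otimes(\det\cE)^{-1}$ hold on the nose).

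I do not expect any serious obstacle here — the proof is essentially a bookkeeping exercise in Serre duality, the only subtlety being the clean identification $\cE^\vee\otimes\omega_X\cong\cE(-4h)$ which requires both that $\cE$ is locally free (so Serre duality applies without derived-category complications and $\cE^\vee$ is the honest dual) and that it has rank exactly $2$ (so that $\cE^\vee\cong\cE\otimes(\det\cE)^{-1}$, a special feature of rank $2$). One should also note that $\cE$ is automatically torsion-free, so the hypothesis "torsion-free" in Definition~\ref{defn:instanton} is met. I would present the three Serre duality computations in sequence and conclude.
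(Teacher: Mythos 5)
Your proposal is correct and follows essentially the same route as the paper: the forward direction is immediate from Definition~\ref{defn:instanton}, and the converse uses the rank-2 identification $\cE^\vee\cong\cE\otimes(\det\cE)^{-1}=\cE(-4h-K_X)$ from orientability together with Serre duality to reduce $\h3(\cE(-3h))$, $\h2(\cE(-2h))$, and $\h2(\cE(-3h))$ to $\h0(\cE(-h))$, $\h1(\cE(-2h))$, and $\h1(\cE(-h))$ respectively. The duality computations match the paper's proof exactly, so no further comment is needed.
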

\begin{proof}
	If  $\cE$ is an orientable rank 2 locally free sheaf, then $\cE^\vee=\cE(-4h-K_X)$, thus Serre duality yields
	$$ \h{2}\big(\cE(-2h)\big) = \h{1}\big(\cE^\vee(2h+K_X)\big) = \h{1}\big(\cE(-2h)\big) ~~ {\rm and} $$
	$$ \h{i}\big(\cE(-3h)\big) = \h{3-i}\big(\cE^\vee(3h+K_X)\big) = \h{3-i}\big(\cE(-h)\big) ~~ {\rm for} ~~i=2,3. $$
	Therefore, the $h$-instanton conditions are reduced to checking the vanishing of $\h0\big(\cE(-h)\big)$ and $\h{1}\big(\cE(-2h)\big)$.
\end{proof}

Finally, the following definition is a generalization, up to a twist by $\cO_X(q_X-2)$, of \cite[Definition 12]{CJ} to the present context.

\begin{definition}\label{defn: rank 0 instanton}
	A \emph{rank 0 $h$-instanton sheaf} on a threefold $(X,h)$ is a 1-dimensional sheaf $\cT$ satisfying $h^{p}(\cT(-2h))=0$ for all $p$. In addition, $d_\cT:=\chi(\cT(-h))$ is called the \textit{degree} of $\cT$.
\end{definition}

Indeed, note that $\cU$ is a rank 0 instanton sheaf on a Fano threefold of Picard rank 1 in the sense of \cite[Defintion 6]{CJ} (that is, $\cU$ is a 1-dimensional sheaf such that $h^{p}(\cU(-q_X\cdot h))=0$) if and only if $\cT=\cU((2-q_X)h)$ is a rank 0 $h$-instanton sheaf.

It is important to observe that rank 0 $h$-instanton sheaves are pure: if $\cZ$ is a non-trivial, 0-dimensional subsheaf of a 1-dimensional sheaf $\cT$, then $\h0(\cT(-2h))\ne0$. Moreover, since $\chi(\cT(th))$ is a degree 1 polynomial on $t$ and $\chi(\cT(-2h))=0$, we must have that $\chi(\cT(th))=d_\cT\cdot(t+2)$.  

\bigskip

With previous definitions in mind, let us establish some key facts about $h$-instanton sheaves that will be useful later on in this paper.

\begin{proposition}
	If $\cE$ is a reflexive, orientable, $h$-instanton sheaf of rank $2$, then $\cE$ is locally free. 
\end{proposition}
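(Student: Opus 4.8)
The plan is to reduce local freeness to the vanishing of a single local Ext sheaf and then detect that vanishing through Serre duality together with instanton condition (2) of Definition \ref{defn:instanton}. Since $\cE$ is reflexive of rank $2$ on a smooth projective threefold, its non-locally-free locus is zero-dimensional; concretely, $\codim\supp\sExt^q(\cE,\cO_X)\ge q+2$ for $q\ge1$, so $\sExt^q(\cE,\cO_X)=0$ for $q\ge2$, while $\cT:=\sExt^1(\cE,\cO_X)$ is a sheaf of finite length, and $\cE$ is locally free if and only if $\cT=0$. Thus the entire problem becomes showing $\h0(\cT)=0$.

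Next I would use orientability to put $\cE$ in a self-dual position. For a rank $2$ reflexive sheaf the natural map $\cE\otimes(\det\cE)^{-1}\to\cE^\vee$ is an isomorphism, since both sides are reflexive and agree off a set of codimension $3$; orientability gives $\det\cE=\cO_X(4h+K_X)$, hence $\cE^\vee\cong\cE(-4h-K_X)$. Setting $\cF:=\cE(-2h)$, this yields the self-duality $\cF^\vee\otimes\omega_X\cong\cF$, while $\sExt^1(\cF,\omega_X)$ is a twist of $\cT$, still of finite length and zero if and only if $\cT=0$, and $\sExt^{\ge2}(\cF,\omega_X)=0$.

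The computational core is the local-to-global spectral sequence $E_2^{p,q}=\H{p}(\sExt^q(\cF,\omega_X))\Rightarrow\Ext^{p+q}(\cF,\omega_X)$. Its only nonzero columns are $q=0$, giving $\H{p}(\cF^\vee\otimes\omega_X)=\H{p}(\cF)$ by self-duality, and $q=1$, a skyscraper contributing only in $\H0$; hence the low-degree terms form the exact sequence
\[ 0\to\H1(\cF)\to\Ext^1(\cF,\omega_X)\to\H0(\sExt^1(\cF,\omega_X))\to\H2(\cF). \]
Instanton condition (2), applied to $\cF=\cE(-2h)$, gives $\h1(\cF)=\h1(\cE(-2h))=0$ and $\h2(\cF)=\h2(\cE(-2h))=0$, and Serre duality then gives $\Ext^1(\cF,\omega_X)\cong\H2(\cF)^{*}=0$. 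Feeding these vanishings into the sequence forces $\H0(\sExt^1(\cF,\omega_X))=0$, whence $\cT=0$ and $\cE$ is locally free.

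I expect the main obstacle to be careful bookkeeping rather than a deep new idea. One must ensure the reflexive structure theory is in force — in particular the codimension estimates on $\supp\sExt^q(\cE,\cO_X)$, which rely on $X$ being smooth — and one must correctly identify the $q=0$ row of the spectral sequence with the cohomology of $\cF$ \emph{itself}: this is exactly where orientability is indispensable, since it turns $\cF^\vee\otimes\omega_X$ back into $\cF$ so that the two cohomological inputs become instanton condition (2). If $X$ were only assumed irreducible and not smooth, the codimension bounds would have to be replaced by their analogues over the regular locus, which is the one step requiring extra care.
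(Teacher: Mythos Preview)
Your proof is correct and takes a genuinely different route from the paper's. The paper computes $\chi(\cE(-2h))=0$ from the instanton conditions, then applies Grothendieck--Riemann--Roch (using orientability so that the formula collapses for $r=2$) to deduce $c_3(\cE)=0$, and finally invokes Hartshorne's criterion \cite[Proposition~2.6]{Har80} to conclude local freeness. You instead bypass the Chern-class computation entirely and attack $\sExt^1(\cE,\cO_X)$ directly via the local-to-global spectral sequence and Serre duality. Your argument is more self-contained and uses only condition~(2) of Definition~\ref{defn:instanton}, whereas the paper's $\chi=0$ step implicitly needs all four $\h{p}(\cE(-2h))$ to vanish. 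It is worth noting that your method is essentially the one the paper itself employs later, in the proof of Theorem~\ref{thm:double_dual_bundle}, to show that the double dual of a non-locally-free rank~$2$ instanton is locally free; you have independently found the technique the authors reserve for that harder statement. Both approaches tacitly require $X$ smooth---yours for the codimension bounds on $\supp\sExt^q(\cE,\cO_X)$, theirs for Riemann--Roch and Hartshorne's criterion---a point you rightly flag at the end.
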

\begin{proof}
	If $\cE$ is an orientable, $h$-instanton sheaf of rank $r$, then $\h{p}(\cE(-2))=0$ for all $p\in\bZ$ by definition. By applying the Grothendieck–Riemann-Roch theorem, we obtain 
	$$ \chi(\cE(-2h))=\frac{c_3(\cE)}{2} + \frac{r(r-1)(r-2) K_X^3}{48} + \frac{(2-r)K_X \cdot c_2\big(\cE(-2h) \big)}{4}=0. $$
	Setting $r=2$, we have  $c_3(\cE(-2h))=0$. Therefore, $\cE$ is locally free by \cite[Proposition 2.6]{Har80}.
\end{proof}

Recall that the \textit{homological dimension} of a coherent sheaf $\cE$ is defined by
$$ {\rm dh}(\cE) := \max \{p ~|~ \inext^p(\cE,\cO_X)\ne0 \} ; $$
note that $\cE$ is locally free if and only if ${\rm dh}(\cE)=0$.

\begin{lemma}\label{lem:hd=1}
	The homological dimension of a non-locally free $h$-instanton sheaf is equal to 1.
\end{lemma}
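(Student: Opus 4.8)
The plan is to bound the homological dimension of a non-locally free $h$-instanton sheaf $\cE$ both from above and from below. For the upper bound, recall that the local rings of $X$ are regular of dimension at most $3$ (since $X$ is a smooth irreducible projective threefold), so the Auslander--Buchsbaum formula gives $\mathrm{dh}_x(\cE) = 3 - \mathrm{depth}_x(\cE) \le 3$ at every point $x$; hence $\mathrm{dh}(\cE) \le 3$ a priori. I would then improve this: since $\cE$ is torsion-free, it has no associated primes of dimension $0$ or (after checking) $1$, so $\mathrm{depth}_x(\cE) \ge 1$ wherever $\cE$ is not already free at $x$. Actually the cleaner route is to use that a torsion-free sheaf on a normal variety is reflexive away from a closed set of codimension $\ge 3$, and reflexive sheaves on a smooth threefold have $\mathrm{dh} \le 1$; more precisely, for a torsion-free $\cE$ one has $\sExt^p(\cE, \cO_X) = 0$ for $p = 0$ automatically, and $\sExt^3(\cE,\cO_X)$ is supported in dimension $0$ while $\sExt^2(\cE,\cO_X)$ is supported in dimension $\le 1$. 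The key point is that for a torsion-free sheaf $\sExt^3(\cE,\cO_X)$ vanishes: dualizing the canonical exact sequence $0 \to \cE \to \cE^{\vee\vee} \to \cQ \to 0$, where $\cQ$ has dimension $\le 1$, and using that $\cE^{\vee\vee}$ is reflexive (hence $\mathrm{dh} \le 1$), we get $\sExt^3(\cE,\cO_X) \cong \sExt^2(\cQ,\cO_X)$ since $\cQ$ has dimension $\le 1$ its $\sExt^2$ and $\sExt^3$ need not both vanish — so I must argue that the dimension of $\cQ$ is actually $\le 1$ and then that $\sExt^3(\cQ,\cO_X)=0$ because $\cQ$ has no $0$-dimensional subsheaf. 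This last vanishing follows because $\cQ$ is, in our situation, the quotient $\cE^{\vee\vee}/\cE$ which is a rank $0$ $h$-instanton sheaf by Theorem \ref{thm:double_dual_bundle}, hence pure of dimension $1$; a pure $1$-dimensional sheaf on a smooth threefold has $\mathrm{dh} = 2$, so $\sExt^3(\cQ,\cO_X) = 0$. Therefore $\mathrm{dh}(\cE) \le 2$, and then $\sExt^3(\cE,\cO_X) \cong \sExt^3(\cQ,\cO_X) = 0$ forces $\mathrm{dh}(\cE)\le 2$; I then refine once more to get $\le 1$.

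More carefully, here is the argument I would actually write. Let $\cE$ be a non-locally free $h$-instanton sheaf. Since $\cE$ is torsion-free on the smooth threefold $X$, we have $\sExt^0(\cE,\cO_X) = \cHom(\cE,\cO_X) = \cE^\vee$, and the canonical map $\cE \to \cE^{\vee\vee}$ has torsion-free-of-rank-$0$, hence at-most-$1$-dimensional, cokernel $\cQ$. By Theorem \ref{thm:double_dual_bundle}, $\cE^{\vee\vee}$ is a locally free $h$-instanton sheaf and $\cQ = \cE^{\vee\vee}/\cE$ is a rank $0$ $h$-instanton sheaf, which we already observed is pure of dimension $1$. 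Applying $\sHom(-,\cO_X)$ to $0 \to \cE \to \cE^{\vee\vee} \to \cQ \to 0$ and using that $\cE^{\vee\vee}$ is locally free yields isomorphisms $\sExt^p(\cE,\cO_X) \cong \sExt^{p+1}(\cQ,\cO_X)$ for all $p \ge 1$. Since $\cQ$ is a pure sheaf of dimension $1$ on a smooth threefold, it is Cohen--Macaulay of codimension $2$, so $\sExt^{p+1}(\cQ,\cO_X) = 0$ for $p+1 \neq 2$, i.e. for $p \ne 1$. Consequently $\sExt^p(\cE,\cO_X) = 0$ for all $p \ge 2$, and $\sExt^1(\cE,\cO_X) \cong \sExt^2(\cQ,\cO_X) \ne 0$ because $\cQ$ is nonzero Cohen--Macaulay of codimension $2$. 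Since $\cE$ is not locally free, $\sExt^0(\cE,\cO_X)$ does not already compute everything — more to the point, $\mathrm{dh}(\cE) = \max\{p : \sExt^p(\cE,\cO_X)\ne 0\} = 1$.

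The step I expect to be the main obstacle is justifying purity of $\cQ = \cE^{\vee\vee}/\cE$ in dimension $1$, and more specifically that it has no embedded or $0$-dimensional components; this is precisely what is supplied by Theorem \ref{thm:double_dual_bundle} together with the observation (made just after Definition \ref{defn: rank 0 instanton}) that rank $0$ $h$-instanton sheaves are pure of dimension $1$. One must also be slightly careful that $\cQ$ is genuinely $1$-dimensional and not $0$-dimensional or zero: it is nonzero precisely because $\cE$ is assumed non-locally free (equivalently, $\cE \ne \cE^{\vee\vee}$), and it cannot be $0$-dimensional since a $0$-dimensional quotient of $\cE^{\vee\vee}$ with torsion-free kernel would force $\cE$ to fail reflexivity in codimension $2$, contradicting that $\cE^{\vee\vee}$ is already the reflexive hull; in any case purity is exactly the content we invoke. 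Alternatively, if one prefers to avoid forward-referencing Theorem \ref{thm:double_dual_bundle}, the inequality $\mathrm{dh}(\cE) \le 1$ can be obtained directly from Auslander--Buchsbaum plus the fact that a torsion-free sheaf on a normal variety has depth $\ge 2$ outside a set of codimension $\ge 3$ — but since Theorem \ref{thm:double_dual_bundle} precedes this lemma in the paper, the cleanest exposition uses it, and the only remaining point is the strict inequality $\mathrm{dh}(\cE) \ge 1$, which is immediate from $\cE$ being non-locally free.
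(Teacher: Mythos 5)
There is a genuine gap: the lemma is stated for a non-locally free $h$-instanton sheaf of \emph{arbitrary} rank, with no orientability assumption, but your whole argument is funnelled through Theorem \ref{thm:double_dual_bundle} --- both the local freeness of $\cE^{\vee\vee}$ and the purity of $\cQ=\cE^{\vee\vee}/\cE$ come from it --- and that theorem is stated, and proved, only for rank 2 orientable $h$-instanton sheaves (its proof uses $\cE^{\vee\vee\vee}\simeq\cE^{\vee\vee}\otimes\det(\cE^{\vee\vee})^{-1}$ and other rank 2 facts). So as written you only obtain the rank 2 orientable case. Moreover, your stated justification for invoking it is factually wrong: Theorem \ref{thm:double_dual_bundle} appears in Section \ref{sec:elem transf}, \emph{after} this lemma, so you would be making a forward reference (it happens not to be circular, since its proof does not use this lemma, but it is the opposite of what you assert). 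Finally, your fallback route for $\mathrm{dh}(\cE)\le 1$ does not work: Auslander--Buchsbaum together with ``torsion-free implies depth $\ge 2$ outside a set of codimension $\ge 3$'' only gives $\mathrm{dh}\le 1$ away from finitely many points, and at such points a torsion-free sheaf can perfectly well have homological dimension $2$ --- e.g.\ a twist of the ideal sheaf of a $0$-dimensional subscheme, for which $\sExt^2(\cI_Z,\cO_X)\simeq\sExt^3(\cO_Z,\cO_X)\ne 0$. The upper bound genuinely requires the instanton vanishings of Definition \ref{defn:instanton}, which your alternative never uses; without them the statement is simply false for torsion-free sheaves.

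For comparison, the paper's proof is direct and rank-free: for a torsion-free sheaf on a threefold, $\inext^2(\cE,\omega_X(th))$ is $0$-dimensional (and $\inext^3(\cE,\cO_X)=0$), so for $t\gg 0$ its global sections compute $\Ext^2(\cE,\omega_X(th))$, which by Serre duality is $\H1(\cE(-th))^*$; the extended instanton vanishings give $\h1(\cE(-th))=0$ for $t\ge 2$, forcing $\inext^2(\cE,\cO_X)=0$, while $\mathrm{dh}(\cE)\ge 1$ is immediate from non-local-freeness. Your reduction --- $\sExt^p(\cE,\cO_X)\simeq\sExt^{p+1}(\cQ,\cO_X)$ for $p\ge 2$ together with purity of $\cQ$ --- is a correct equivalent formulation of the same content, and it would become a complete, rank-free proof if you replaced the appeal to Theorem \ref{thm:double_dual_bundle} by (i) the fact that the reflexive sheaf $\cE^{\vee\vee}$ has $\mathrm{dh}\le 1$ (local freeness is not available outside the rank 2 orientable setting, where it relies on Proposition 2.6), and (ii) a direct check that $\cQ$ has no $0$-dimensional subsheaf, e.g.\ via $\h0(\cQ(-th))\le \h0(\cE^{\vee\vee}(-th))+\h1(\cE(-th))=0$ for $t\gg 0$; but at that point you are essentially reproving the paper's argument.
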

\begin{proof}
	If $\cE$ is a torsion free sheaf then $\inext^2(\cE,\omega_X(th))$ is a 0-dimensional sheaf for any $t\in\bZ$, if non-trivial. Recall that there is $t_0$ such that 
	$$ H^0(\inext^2(\cE,\omega_X(th))) = \Ext^2(\cE,\omega_X(th)) ~~,~~ \forall t>t_0. $$
	By Serre duality, we have that for every $t\in\bZ$
	$$ \Ext^2(\cE,\omega_X(th)) \simeq \Ext^1(\omega_X(th),\cE\otimes\omega_X)^* = \H1(\cE(-th))^*=0 ~~,~~ \forall t>1. $$
	It follows that $\inext^2(\cE,\cO_X)=0$, as desired.
\end{proof}

Let us now turn our attention to rank 0 $h$-instanton sheaves.

\begin{lemma}\label{lem:coho-t}
	If $\cT$ is a rank 0 $h$-instanton sheaf, then $\h0(\cT((-2-n) h))=0$ and $\h1(\cT((-2+n) h))=0$ for every $n\ge0$. In particular, $d_\cT=\h0(\cT(-h))=\h1(\cT(-3h))$.
\end{lemma}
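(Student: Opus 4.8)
The plan is to exploit the fact that a rank $0$ $h$-instanton sheaf $\cT$ is supported on a curve, so its cohomology is controlled by twisting down (which kills $\h1$) and twisting up (which kills $\h0$), starting from the single vanishing $\h{p}(\cT(-2h))=0$ built into Definition~\ref{defn: rank 0 instanton}. First I would recall that, since $\cO_X(h)$ is globally generated, one can choose a general hyperplane section, i.e. a divisor $D\in|h|$ meeting the (1-dimensional) support of $\cT$ properly; restriction to $D$ gives a short exact sequence
$$ 0 \to \cT(-h) \to \cT \to \cT|_D \to 0, $$
where $\cT|_D$ is $0$-dimensional, hence has cohomology concentrated in degree $0$. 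Twisting by $\cO_X((-1-n)h)$ and taking the long exact sequence, one gets surjections $\h1(\cT((-2-n)h)) \twoheadrightarrow \h1(\cT((-1-n)h))$ for every $n\ge0$; iterating downward from the known vanishing $\h1(\cT(-2h))=0$ forces $\h1(\cT((-2-n)h))=0$ for all $n\ge0$. Symmetrically, the same sequence gives injections $\h0(\cT((-1+n)h))\hookrightarrow \cdots$; more precisely, twisting by $\cO_X((-2+n)h)$ yields $0\to \h0(\cT((-3+n)h))\to \h0(\cT((-2+n)h))$, so $\h0$ of the twists is monotone, and chasing up from $\h0(\cT(-2h))=0$ gives $\h0(\cT((-2+n)h))=0$ for all $n\ge0$. (Here I would use that only $\h0$ and $\h1$ can be nonzero, since $\cT$ is $1$-dimensional, so the long exact sequence is short in the relevant range.)

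Alternatively — and perhaps more cleanly — one can avoid choosing a hyperplane section and instead argue directly from Serre's vanishing and the Euler characteristic. Since $\cT$ is $1$-dimensional and pure (as noted in the text just before the lemma), $\h{p}(\cT(th))=0$ for $p\ge2$ and all $t$, and $\h0(\cT(th))=0$ for $t\ll0$ while $\h1(\cT(th))=0$ for $t\gg0$. The text already records that $\chi(\cT(th)) = d_\cT\,(t+2)$. Combining Serre duality-type boundedness with the recursion from the hyperplane sequence gives that $\h0(\cT(th))$ is a nondecreasing function of $t$ that vanishes at $t=-2$, hence vanishes for all $t\le -2$; dually $\h1(\cT(th))$ vanishes for all $t\ge -2$. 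In particular, at $t=-1$ we get $\h1(\cT(-h))=0$, so $d_\cT=\chi(\cT(-h))=\h0(\cT(-h))$; and at $t=-3$ we get $\h0(\cT(-3h))=0$, so $-d_\cT=\chi(\cT(-3h))=-\h1(\cT(-3h))$, i.e. $d_\cT=\h1(\cT(-3h))$.

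The main obstacle — really the only subtle point — is justifying the purity/dimension input cleanly: one needs that $\cT$ has no embedded $0$-dimensional subsheaf (so that $\h0(\cT(-2h))=0$ genuinely propagates) and that the general member of $|h|$ restricts well to $\supp(\cT)$. Both are already available: purity is established in the paragraph preceding the lemma, and global generation of $\cO_X(h)$ guarantees a suitable hyperplane section by a Bertini-type argument on the curve $\supp(\cT)$. Once these are in hand, the cohomology vanishings follow from the monotonicity of the long exact sequence as above, and the identities $d_\cT = \h0(\cT(-h)) = \h1(\cT(-3h))$ drop out by plugging $t=-1$ and $t=-3$ into $\chi(\cT(th))=d_\cT(t+2)$.
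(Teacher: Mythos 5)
Your proposal ultimately follows the paper's own route (a hyperplane section $S\in|h|$ transversal to $\supp(\cT)$, the short exact sequence $0\to\cT((k-1)h)\to\cT(kh)\to\cT\otimes\cO_S(k)\to0$ with $\cT\otimes\cO_S$ zero-dimensional, and evaluation of $\chi(\cT(th))=d_\cT(t+2)$ at $t=-1,-3$), but the first paragraph of your write-up has the direction of propagation reversed, and the statements asserted there are false. From the long exact sequence one gets $\H0(\cT((k-1)h))\hookrightarrow\H0(\cT(kh))$, so the vanishing $\h0(\cT(-2h))=0$ propagates \emph{downward} to $\h0(\cT((-2-n)h))=0$, and $\H1(\cT((k-1)h))\twoheadrightarrow\H1(\cT(kh))$, so $\h1(\cT(-2h))=0$ propagates \emph{upward} to $\h1(\cT((-2+n)h))=0$. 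You instead conclude $\h1(\cT((-2-n)h))=0$ and $\h0(\cT((-2+n)h))=0$ for all $n\ge0$: neither follows from the maps you quote (a surjection with vanishing target says nothing about its source, and an injection with vanishing source says nothing about its target), and both are false whenever $d_\cT>0$, since they would force $\h0(\cT(-h))=\h1(\cT(-3h))=0$, contradicting the very identity $d_\cT=\h0(\cT(-h))=\h1(\cT(-3h))$ to be proved (for instance $\cT=\imath_*\cO_L(1)$ of Lemma \ref{cor: O_L(1) is rank 0 instanton} has $d_\cT=1$).

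Your second, ``alternative'' paragraph does state the correct monotonicity ($\h0(\cT(th))$ nondecreasing in $t$ and zero at $t=-2$, hence zero for $t\le-2$; $\h1(\cT(th))$ nonincreasing and zero at $t=-2$, hence zero for $t\ge-2$) and correctly extracts $d_\cT=\h0(\cT(-h))=\h1(\cT(-3h))$ from $\chi(\cT(th))=d_\cT(t+2)$; this is precisely the paper's argument, so your proof is correct once the first paragraph's claims are discarded. Two minor points: the vanishing of $\h1$ for $t\ge-2$ comes from the surjectivity in the restriction sequence, not from Serre vanishing, which by itself only controls $t\gg0$; and your justification of the transversal hyperplane section via global generation of $\cO_X(h)$, together with the purity remark preceding the lemma, is exactly what the paper uses to ensure $\sTor^1(\cT,\cO_S)=0$.
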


\begin{proof}
	Given a rank 0 $h$-instanton sheaf $T$, let $S\subset X$ be a hyperplane section transversal to the support of $\cT$ (i.e. $\dim(\supp(\cT)\cap S)=0$), so that $\sTor^1(\cT,\cO_S)=0$. This implies that we can twist the exact sequence $0\to\cO_X(-h)\to\cO_X\to\cO_S\to0$ by $\cT(kh)$ to obtain the short exact sequence
	$$ 0 \longrightarrow \cT((k-1)h) \longrightarrow \cT(kh) \longrightarrow \cT\otimes\cO_S(k) \longrightarrow 0. $$
	Taking cohomology, we conclude that $h^0(T((k-1)h))=0$ whenever $h^0(\cT(kh))=0$, while $h^1(\cT(kh))=0$ whenever $h^1(\cT((k-1)h))=0$, since $\dim(\cT\otimes\cO_S)=0$. The desired claim follows by induction on $k$. In particular, we obtain that $\h1(\cT(-h))=0$, thus $\h0(\cT(-h))=\chi(\cT(-h))=d_\cT$. Similarly, $\h0(\cT(-3h))=0$, thus $\h1(\cT(-3h))=-\chi(\cT(-3h))=d_\cT$.
\end{proof}

\begin{proposition} \label{prop:ext2_dual}
	If $\cT$ is a rank 0 $h$-instanton sheaf on $(X, h)$ of degree $d_{\cT}$, then so is $\cT^{\rm D}(4h)$, where $\cT^{\rm D}:=\inext^2(\cT,\omega_X)$. In addition, $\inext^3(\cT,\cO_X)=0$.
\end{proposition}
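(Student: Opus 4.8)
The plan is to deduce both assertions from Grothendieck--Serre duality applied to the pure $1$-dimensional sheaf $\cT$. Recall from the discussion preceding the statement that $\cT$ is pure of dimension $1$, hence of codimension $2$ in $X$; consequently $\inext^q(\cT,\omega_X)$ vanishes for $q<2$ by the codimension bound and for $q=3$ because $\cT$ has no nonzero $0$-dimensional subsheaf (by local duality, $\inext^3(\cT,\omega_X)$ is dual to the maximal $0$-dimensional subsheaf of $\cT$). This already gives $\inext^3(\cT,\cO_X)\cong\inext^3(\cT,\omega_X)\otimes\omega_X^{-1}=0$, settling the last assertion, and it gives $R\sHom(\cT,\omega_X)\cong\cT^{\rm D}[-2]$. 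The standard structure theory of pure sheaves then shows that $\cT^{\rm D}=\inext^2(\cT,\omega_X)$ is again pure of dimension $1$, so $\cT^{\rm D}(4h)$ is genuinely a $1$-dimensional sheaf; it remains to verify the cohomological conditions of Definition \ref{defn: rank 0 instanton} for $\cT^{\rm D}(4h)$ and to compute its degree.

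For this I would first record a duality identity: tensoring $R\sHom(\cT,\omega_X)\cong\cT^{\rm D}[-2]$ with $\cO_X(th)$ and taking hypercohomology (the local-to-global spectral sequence collapses since only $\inext^2$ survives) gives $\Ext^{i+2}\big(\cT,\omega_X(th)\big)\cong H^i\big(X,\cT^{\rm D}(th)\big)$, and combining this with Serre duality $\Ext^{j}\big(\cT,\omega_X(th)\big)\cong H^{3-j}\big(X,\cT(-th)\big)^{*}$ (applied with $j=i+2$) yields
$$ H^i\big(\cT^{\rm D}(th)\big)\ \cong\ H^{1-i}\big(\cT(-th)\big)^{*}\qquad\text{for all } i,t\in\bZ. $$
Taking $t=2$: the left-hand side vanishes for $i\ge 2$ and for $i<0$ because $\cT^{\rm D}$ is $1$-dimensional, while for $i=0,1$ it equals $H^{1}\big(\cT(-2h)\big)^{*}$ and $H^{0}\big(\cT(-2h)\big)^{*}$, both zero since $\cT$ is a rank $0$ $h$-instanton sheaf. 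Hence $h^p\big((\cT^{\rm D}(4h))(-2h)\big)=h^p\big(\cT^{\rm D}(2h)\big)=0$ for every $p$, that is, $\cT^{\rm D}(4h)$ is a rank $0$ $h$-instanton sheaf.

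For the degree I would pass to Euler characteristics in the displayed identity, obtaining $\chi\big(\cT^{\rm D}(th)\big)=-\chi\big(\cT(-th)\big)$; since $\chi\big(\cT(sh)\big)=d_\cT(s+2)$ (as recalled just above the statement), this equals $d_\cT(t-2)$, so $d_{\cT^{\rm D}(4h)}=\chi\big((\cT^{\rm D}(4h))(-h)\big)=\chi\big(\cT^{\rm D}(3h)\big)=d_\cT$, as claimed. The argument carries no real conceptual difficulty; the only points demanding care are the bookkeeping of cohomological degrees in Serre duality and in the local-to-global spectral sequence, and invoking the correct facts about pure sheaves together with the hypothesis that $\omega_X$ is a line bundle, which is what makes the sheaf-theoretic form of Serre duality used above available.
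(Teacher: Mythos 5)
Your argument is correct, and its core is the same as the paper's: collapse the local-to-global spectral sequence for $R\sHom(\cT,\omega_X(th))$ using $\inext^0=\inext^1=0$, then apply Serre duality and the formula $\chi(\cT(th))=d_\cT(t+2)$ to read off the instanton conditions and the degree of $\cT^{\rm D}(4h)$. The genuine difference lies in how the second assertion, $\inext^3(\cT,\cO_X)=0$, is obtained. You prove it \emph{up front}, directly from purity of $\cT$ via local duality (equivalently, the purity criterion in \cite[Proposition 1.1.10]{HL10}, which forces $\codim\inext^3\ge 4$ on a threefold); this makes only the row $q=2$ of the spectral sequence survive and yields the clean isomorphism $H^i(\cT^{\rm D}(th))\cong H^{1-i}(\cT(-th))^{*}$ for all $i$ and $t$, from which everything follows by pure bookkeeping. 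The paper, by contrast, does not assume this vanishing: it keeps the extra term $H^0(\inext^3(\cT,\omega_X(th)))$ inside $\Ext^3$, first proves that $\cT^{\rm D}(4h)$ is a rank $0$ $h$-instanton of degree $d_\cT$ using only the injection $H^1(\inext^2(\cT,\omega_X(th)))\hookrightarrow H^0(\cT((2-t)h))^{*}$, and then deduces $\inext^3=0$ a posteriori by a degree count: the injection at $t=1$ must be an isomorphism because both sides have dimension $d_\cT$, killing the complementary $0$-dimensional summand. Your route is slightly more economical and isolates the purity input cleanly; the paper's route avoids invoking local duality (or the sharper purity criterion) at the price of a small numerical argument at the end. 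Also note your degree computation via $\chi(\cT^{\rm D}(th))=-\chi(\cT(-th))$ replaces the paper's use of Lemma \ref{lem:coho-t}; both are fine.
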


\begin{proof}
	Firstly, $\cT^{\rm D}$ is a pure 1-dimensional sheaf and $\inext^q (\cT,\omega_X) = 0$ for $q =0,1$ by \cite[Proposition 1.1.6]{HL10}. Using the spectral sequence 
	\[ E^{p,q}_2=\H {p} (\inext^q(\cT, \omega_X (th)))\Rightarrow \Ext^{p+q}(\cT, \omega_X (th)), \]
	and vanishing of $\inext^q (\cT,\omega_X)$ for $q =0,1$, we have two isomorphisms:
	$$  \H {0} (\inext^2(\cT, \omega_X (th))) \simeq \Ext^{2}(\cT, \omega_X (th)) ~~{\rm and} $$
	$$ \H {1} (\inext^2(\cT, \omega_X (th))) \oplus \H {0} (\inext^3(\cT, \omega_X (th))) \simeq \Ext^{3}(\cT, \omega_X (th)). $$
	
	
	Serre duality yields, for $p=0,1$ 
	\[ \Ext^{p+2}(\cT, \omega_X (th)) \simeq \Ext^{1-p}(\omega_X (th), \cT \otimes \omega_X)^{*} \simeq \H{1-p} (\cT(-th))^{*}. \]
	Therefore, setting $\cU:=\cT^{\rm D}(4h)$, we obtain:
	$$ \H{0}(\cU(-2h)) = \H {0} (\inext^2(\cT, \omega_X (2h))) \simeq \H{1} (\cT(-2h))^{*}=0, ~~{\rm and} $$
	$$ \H{1}(\cU(-2h)) = \H {1} (\inext^2(\cT, \omega_X (2h))) \hookrightarrow \H{0} (\cT(-2h))^{*}=0, $$
	so $\cU$ is a rank 0 $h$-instanton sheaf. In addition,
	$$ d_{\cU}=\h0(\cU(-h))=\h1(\cT(-3h))=d_{\cT}. $$
	
	Finally, we also have that
	$$ \H{1}(\cU(-3h)) = \H {1} (\inext^2(\cT, \omega_X (h))) \hookrightarrow \H{0} (\cT(-h))^{*}; $$
	since $\h1(\cU(-3h))=d_{\cU}=d_{\cT}=\h{0}(\cT(-h))$, the map above must be an isomorphism, thus 
	$\H {0} (\inext^3(\cT, \omega_X (h)))=0$. Since $\dim\inext^3(\cT, \omega_X (h))=0$, we conclude that $\inext^3(\cT,\cO_X)=0$.
\end{proof}

We close this section with a simple example of rank 0 $h$-instanton sheaves on threefolds.

\begin{lemma}\label{cor: O_L(1) is rank 0 instanton}
	If $\imath:L\hookrightarrow X$ is a line in a polarized threefold $(X,h)$ such that $\ell\cdot h=1$, where $\ell$ denotes the class of $L$ in $A^2(X)$, then sheaf $\imath_*\big(\cO_{L}(1)\big)$ is a rank 0 $h$-instanton sheaf of degree 1.
\end{lemma}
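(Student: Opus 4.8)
The plan is direct: the statement reduces to a cohomology computation on $L\cong\bP^1$ via the projection formula and exactness of pushforward along the closed immersion $\imath$. First I would record that $L$ is a line, so $L\cong\bP^1$, and that the hypothesis $\ell\cdot h=1$ says precisely that $\cO_X(h)|_L$ has degree $1$ on $L$, i.e. $\imath^*\cO_X(h)\cong\cO_L(1)$. Combining this with the projection formula, one obtains for every $t\in\bZ$
$$ \imath_*\big(\cO_{L}(1)\big)\otimes\cO_X(th)\;\cong\;\imath_*\big(\cO_L(1)\otimes\imath^*\cO_X(th)\big)\;\cong\;\imath_*\big(\cO_L(t+1)\big). $$

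Next, since $\imath$ is a closed immersion (hence affine, and $\imath_*$ exact), one has $\H{p}\big(X,\imath_*\cF\big)\cong\H{p}\big(L,\cF\big)$ for every coherent sheaf $\cF$ on $L$. Setting $\cT:=\imath_*\big(\cO_{L}(1)\big)$, which is a pure $1$-dimensional sheaf supported on $L$, the identity above at $t=-2$ gives
$$ \h{p}\big(\cT(-2h)\big)=\h{p}\big(\bP^1,\cO_{\bP^1}(-1)\big)=0\qquad\text{for all }p, $$
the vanishing in degrees $0$ and $1$ being the standard computation on $\bP^1$, and the vanishing in higher degrees being automatic since $\dim L=1$. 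By Definition \ref{defn: rank 0 instanton}, this already shows $\cT$ is a rank $0$ $h$-instanton sheaf. For the degree, the same identity at $t=-1$ yields
$$ d_\cT=\chi\big(\cT(-h)\big)=\chi\big(\bP^1,\cO_{\bP^1}\big)=1. $$

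There is essentially no obstacle here; the only point that warrants care is interpreting the symbol $\cO_L(1)$ correctly, namely as $\cO_{\bP^1}(1)$ under the identification $L\cong\bP^1$, which is consistent with $\imath^*\cO_X(h)$ exactly because of the assumption $\ell\cdot h=1$. (If one instead reads $\cO_L(1)$ as $\imath^*\cO_X(h)$ from the outset, the argument is literally the displayed computations above with no identification needed.)
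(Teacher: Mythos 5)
Your proof is correct and follows essentially the same route as the paper's: both reduce the cohomological vanishing and the degree computation to the standard facts $\h{i}(\bP^1,\cO_{\bP^1}(-1))=0$ and $\chi(\bP^1,\cO_{\bP^1})=1$ via the projection formula and the identification $\imath^*\cO_X(h)\cong\cO_L(1)$ coming from $\ell\cdot h=1$. Your write-up merely makes explicit the steps (projection formula, exactness of $\imath_*$) that the paper leaves implicit.
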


\begin{proof}
	It is clear that $\imath_*\big(\cO_{L}(1)\big)$ is a pure 1-dimensional sheaf. Also,
	$$ \h{i} \Big(\imath_*\big(\cO_{L}(1)\big) \otimes \cO_{X_c}(-2h) \Big) =
	\h{i}\Big(\imath_*\big(\cO_{L}(-1)\big)\Big) =  \h{i}(L,\cO_{L}(-1)) = 0 $$
	for all $i$. Then $\imath_*\big(\cO_{L}(1)\big)$ is a rank 0 $h$-instanton sheaf on $(X,h)$ of degree \linebreak $d:=\h{0}\Big(\imath_*\big(\cO_{L}(1)\big) \otimes \cO_{X_c}(-h) \Big) = \h{0}(L,\cO_{L})=1$.
\end{proof}


\section{Elementary transformations and classification of rank 2 $h$-instantons} \label{sec:elem transf}

Let $\cE$ be an $h$-instanton sheaf with defect $0$ and $\cT$ be a rank 0 $h$-instanton sheaf on a projective threefold $(X, h)$. Given an epimorphism $\varphi \colon \cE \to \cT$, the sheaf $\cE_{\varphi}:= \ker(\varphi)$ is called an \textit{elementary transformation of $\cE$ along $\cT$}.

\begin{lemma}\label{lem:kernel is instanton sheaf}
	Under the conditions of the previous paragraph, $\cE_{\varphi}$ is an $h$-instanton sheaf with defect 0 of charge $k(\cE)+d_\cT$; in addition, $\cE_{\varphi}$ is orientable if and only if $\cE$ is.
\end{lemma}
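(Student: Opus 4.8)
The plan is to work directly with the defining short exact sequence
\[
0 \longrightarrow \cE_\varphi \longrightarrow \cE \stackrel{\varphi}{\longrightarrow} \cT \longrightarrow 0,
\]
twist it by $\cO_X(-th)$ for the relevant values of $t$, and chase the resulting long exact sequences in cohomology. First I would note that $\cE_\varphi$ is torsion-free, since it is a subsheaf of the torsion-free sheaf $\cE$. Twisting by $\cO_X(-h)$ gives $\h0(\cE_\varphi(-h)) \hookrightarrow \h0(\cE(-h)) = 0$, so condition (1) of Definition \ref{defn:instanton} (the $\h0$ part) holds. For the $\h3(\cE_\varphi(-3h))$ part, twist by $\cO_X(-3h)$ and use $\h3(\cT(-3h)) = 0$ (which holds because $\cT$ is $1$-dimensional, so $\h p(\cT(\bullet)) = 0$ for $p \ge 2$) together with $\h3(\cE(-3h)) = 0$ and the fact that $\h3$ is right-exact in the relevant range: the tail $\h3(\cE_\varphi(-3h)) \to \h3(\cE(-3h)) \to \h3(\cT(-3h))$ forces $\h3(\cE_\varphi(-3h)) \to 0$ to be surjective onto $0$, but I actually need the preceding term, so I would instead read off $\cdots \to \h2(\cT(-3h)) \to \h3(\cE_\varphi(-3h)) \to \h3(\cE(-3h)) = 0$ and observe $\h2(\cT(-3h)) = 0$ as well, giving $\h3(\cE_\varphi(-3h)) = 0$.

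Next I would handle condition (2). Twisting by $\cO_X(-2h)$ yields the segment
\[
\h0(\cT(-2h)) \to \h1(\cE_\varphi(-2h)) \to \h1(\cE(-2h)) \to \h1(\cT(-2h)) \to \h2(\cE_\varphi(-2h)) \to \h2(\cE(-2h)).
\]
Since $\cT$ is a rank $0$ $h$-instanton sheaf, $\h p(\cT(-2h)) = 0$ for all $p$; combined with $\h1(\cE(-2h)) = \h2(\cE(-2h)) = 0$ this immediately gives $\h1(\cE_\varphi(-2h)) = \h2(\cE_\varphi(-2h)) = 0$. For condition (3) and the charge computation I would compare Euler characteristics: from the twisted sequence $0 \to \cE_\varphi(-h) \to \cE(-h) \to \cT(-h) \to 0$ we get $\chi(\cE_\varphi(-h)) = \chi(\cE(-h)) - \chi(\cT(-h)) = -k(\cE) - d_\cT$. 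Using the already-established vanishings $\h0(\cE_\varphi(-h)) = 0$, $\h2(\cE_\varphi(-2h)) = 0$ (hence $\h2(\cE_\varphi(-3h)) \cong \h1(\cE_\varphi(-h))$ is not automatic — I must be careful here). Rather, I would invoke the supplementary vanishings recorded right after Definition \ref{defn:instanton}: once $\cE_\varphi$ is known to satisfy conditions (1) and (2), it is an $h$-instanton with defect $0$, so $\h p(\cE_\varphi(-h)) = 0$ for $p \ne 1$ and $\h p(\cE_\varphi(-3h)) = 0$ for $p \ne 2$; then $\chi(\cE_\varphi(-h)) = -\h1(\cE_\varphi(-h)) = -k(\cE_\varphi)$, so $k(\cE_\varphi) = k(\cE) + d_\cT$, and condition (3) $\h1(\cE_\varphi(-h)) = \h2(\cE_\varphi(-3h))$ follows from the defect-$0$ structure as well. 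The subtle point I want to get right is the logical order: I establish (1) and (2) by direct cohomology chase, then quote the abstract consequences of \cite{AC23} to deduce (3) and pin down the charge.

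Finally, for orientability I would use that the defining sequence is an exact sequence of coherent sheaves with $\cT$ of rank $0$, hence $\rk(\cE_\varphi) = \rk(\cE) = 2$ and $c_1(\cE_\varphi) = c_1(\cE) - c_1(\cT)$. But $\cT$ is supported in dimension $1$, so $c_1(\cT) = 0$ in $A^1(X)$; therefore $c_1(\cE_\varphi) = c_1(\cE)$, and the orientability condition $c_1 = (4h + K_X)\rk/2$ holds for $\cE_\varphi$ if and only if it holds for $\cE$. I expect the main obstacle to be purely bookkeeping: making sure that at each stage I only use vanishings already proven for $\cE_\varphi$ (or quoted for $\cT$), and in particular not circularly assuming condition (3) for $\cE_\varphi$ before deducing it. No deep input is needed beyond the long exact sequence, Definition \ref{defn: rank 0 instanton}, and the stability of the defect-$0$ and orientability conditions under subtracting a $1$-dimensional sheaf.
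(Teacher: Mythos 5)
Your handling of conditions (1) and (2), of torsion-freeness, and of orientability is correct and is exactly what the paper's (very terse) proof intends: chase the long exact sequences attached to $0 \to \cE_\varphi \to \cE \to \cT \to 0$, and note that $c_1(\cE_\varphi)=c_1(\cE)$ because $\cT$ is supported in codimension $2$. However, your derivation of condition (3) and of the charge is circular. You assert that ``once $\cE_\varphi$ is known to satisfy conditions (1) and (2), it is an $h$-instanton with defect $0$'' and then read off the supplementary vanishings and condition (3) from that. But condition (3) is part of Definition \ref{defn:instanton}, and the supplementary vanishings recorded after it are consequences of the \emph{full} definition, not of (1) and (2) alone; indeed (1) and (2) do not imply (3) for a general torsion-free sheaf (for instance $\Omega_{\p3}(1)$ on $\p3$ satisfies (1) and (2) but has $\h1(\Omega_{\p3})=1\neq 0=\h2(\Omega_{\p3}(-2))$). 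So you may not quote the defect-$0$ structure of $\cE_\varphi$ before you have proved it.

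The gap is easy to close, and the repair is presumably what the authors mean by ``the appropriate exact sequences in cohomology''. Twisting by $-h$ gives the segment $0=\h0(\cE(-h))\to\h0(\cT(-h))\to\h1(\cE_\varphi(-h))\to\h1(\cE(-h))\to\h1(\cT(-h))=0$, where $\h1(\cT(-h))=0$ and $\h0(\cT(-h))=d_\cT$ by Lemma \ref{lem:coho-t}, so $\h1(\cE_\varphi(-h))=k(\cE)+d_\cT$. Twisting by $-3h$ gives $0=\h1(\cE(-3h))\to\h1(\cT(-3h))\to\h2(\cE_\varphi(-3h))\to\h2(\cE(-3h))\to\h2(\cT(-3h))=0$, where $\h1(\cE(-3h))=0$ is one of the supplementary vanishings for $\cE$ (legitimate, since $\cE$ \emph{is} a defect-$0$ instanton by hypothesis), $\h2(\cT(-3h))=0$ because $\cT$ is $1$-dimensional, and $\h1(\cT(-3h))=d_\cT$ again by Lemma \ref{lem:coho-t}; hence $\h2(\cE_\varphi(-3h))=k(\cE)+d_\cT$ as well. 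This establishes condition (3) and pins down the charge simultaneously, without appealing to any property of $\cE_\varphi$ not yet proved; your Euler-characteristic computation $\chi(\cE_\varphi(-h))=-k(\cE)-d_\cT$ then serves as a consistency check rather than as the source of the charge.
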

\begin{proof}
	Just consider the short exact sequence
	$$ 0 \rightarrow \cE_{\varphi} \rightarrow \cE \rightarrow \cT \rightarrow 0, $$
	and the appropriate exact sequences in cohomology. Note that $c_1(\cE_\varphi)=c_1(\cE)$ and $\chi(\cE_\varphi(-h))=\chi(\cE(-h))-\chi(\cT(-h))=-k(\cE)-d_\cT$.
\end{proof}

In other words, elementary transformations are a way to construct $h$-instanton sheaves of higher charge. However, the modified instanton sheaf $\cE_\varphi$ is never locally free; so to construct locally free $h$-instanton sheaves of arbitrary charge one must show that $\cE_\varphi$ can be deformed into a locally free sheaf. 

Our next result, which generalizes \cite[Theorem 24]{CJ} to the context of the present paper, shows that every rank 2 orientable $h$-instanton sheaf with defect $0$ can be obtained as an elementary transformation of a locally free $h$-instanton sheaf.

\begin{theorem}\label{thm:double_dual_bundle}
	Let $\cE$ be a rank 2 orientable $h$-instanton sheaf with defect $0$ on a smooth irreducible projective variety $(X, h)$ of dimension three; assume that $\cE$ is not locally free. Then $\cE^{\vee\vee}$ is an orientable locally free $h$-instanton sheaf and $\cT_E:=\cE^{\vee\vee}/\cE$ is a rank $0$ $h$-instanton sheaf.
\end{theorem}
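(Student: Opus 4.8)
The strategy is to exploit the standard exact sequence
\[
0 \longrightarrow \cE \longrightarrow \cE^{\vee\vee} \longrightarrow \cT_E \longrightarrow 0,
\]
where $\cT_E := \cE^{\vee\vee}/\cE$, together with the reflexivity criterion and cohomological bookkeeping. First I would observe that since $\cE$ is torsion-free of rank $2$, the double dual $\cE^{\vee\vee}$ is reflexive of rank $2$ with $c_1(\cE^{\vee\vee}) = c_1(\cE)$, so $\cE^{\vee\vee}$ is again orientable; moreover $\cT_E$ is supported in dimension at most $1$ (it is the cokernel of the reflexive hull, hence has codimension $\ge 2$ support). By Lemma \ref{lem:hd=1}, since $\cE$ is not locally free it has homological dimension exactly $1$, which forces $\cT_E$ to be nonzero and, in fact, pure $1$-dimensional: the argument is that $\inext^2(\cE,\cO_X) \cong \inext^3(\cT_E,\cO_X)$ from the long exact $\inext$-sequence (using that $\cE^{\vee\vee}$ is reflexive, so $\inext^{\ge 2}(\cE^{\vee\vee},\cO_X) = 0$ by \cite[Proposition 1.1.10]{HL10} or the analogue), and this vanishes by Lemma \ref{lem:hd=1}; combined with the identification of $\inext^3(\cT_E, \cO_X)$ with the $0$-dimensional torsion part, purity follows. (Alternatively: the maximal $0$-dimensional subsheaf of $\cT_E$ would pull back to a subsheaf of $\cE^{\vee\vee}$ strictly containing $\cE$ with the same reflexive hull, which is impossible.)

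The next step is to show $\cE^{\vee\vee}$ is an $h$-instanton bundle. Since $\cE^{\vee\vee}$ is reflexive, orientable and rank $2$, by the Proposition preceding Lemma \ref{lem:hd=1} it will be locally free as soon as we know it is an $h$-instanton sheaf. By Lemma \ref{lem:rk2instanton}, for an orientable rank $2$ reflexive (hence after we prove local freeness, locally free) sheaf it suffices to check $\h0(\cE^{\vee\vee}(-h)) = 0$ and $\h1(\cE^{\vee\vee}(-2h)) = 0$ — though strictly I would first verify local freeness via $c_3 = 0$. To get these vanishings, twist the defining sequence by $\cO_X(-h)$ and by $\cO_X(-2h)$ and take cohomology. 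For $\h0(\cE^{\vee\vee}(-h))$: the sequence gives $0 \to \h0(\cE(-h)) \to \h0(\cE^{\vee\vee}(-h)) \to \h0(\cT_E(-h))$; the first term vanishes since $\cE$ is an instanton, but $\h0(\cT_E(-h)) = d_{\cT_E}$ need not vanish, so this alone is not enough — I need to use the connecting map $\h0(\cT_E(-h)) \to \h1(\cE(-h))$ and argue it is injective, equivalently that $\h0(\cE^{\vee\vee}(-h)) \to \h0(\cT_E(-h))$ is zero. Here is where the key input enters: a nonzero section of $\cE^{\vee\vee}(-h)$ would give a nonzero section of $\cE^{\vee\vee}$ vanishing on an ample divisor, but $\cE^{\vee\vee}$ is $\mu$-semistable or at least has no sections of negative twist — this needs justification. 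I would instead mimic the proof of \cite[Theorem 24]{CJ}: analyze the section's zero locus and derive a contradiction with $\h0(\cE(-h)) = 0$ by relating sections of $\cE^{\vee\vee}$ and $\cE$ away from the $1$-dimensional support of $\cT_E$, using that $\cE$ and $\cE^{\vee\vee}$ agree in codimension $1$, so $\h0(\cE^{\vee\vee}(-h)) = \h0(\cE(-h)) = 0$ directly (a reflexive sheaf's sections are determined on the locus where it is locally free, which has codimension $\ge 2$ complement — more precisely $H^0(\cE^{\vee\vee}(-h)) = H^0(\cE(-h))$ because the inclusion $\cE \hookrightarrow \cE^{\vee\vee}$ induces an isomorphism on $H^0$ after any twist, since any section of $\cE^{\vee\vee}(-h)$ restricted to the open locus extends, and conversely). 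Granting $\h0(\cE^{\vee\vee}(-h)) = 0$, the same restriction argument and Lemma \ref{lem:coho-t}-type manipulations handle $\h1$.

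For the remaining claim that $\cT_E$ is a rank $0$ $h$-instanton sheaf, I need $\h p(\cT_E(-2h)) = 0$ for all $p$. Purity (already established) kills $\h0$. Twisting the defining sequence by $\cO_X(-2h)$: $\h1(\cE^{\vee\vee}(-2h)) \to \h1(\cT_E(-2h)) \to \h2(\cE(-2h))$, and the outer terms vanish — the left by the instanton property of $\cE^{\vee\vee}$ just proven, the right by the instanton property of $\cE$ — giving $\h1(\cT_E(-2h)) = 0$; similarly $\h2(\cT_E(-2h))$ sits between $\h2(\cE^{\vee\vee}(-2h))$ and $\h3(\cE(-2h))$. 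The vanishing of $\h2(\cE^{\vee\vee}(-2h))$ follows from the instanton property of the bundle $\cE^{\vee\vee}$ (via Lemma \ref{lem:rk2instanton} and Serre duality), and $\h3(\cE(-2h)) = \h0(\cE^\vee(2h + K_X))^* $; I would control this last term using orientability ($\cE^{\vee\vee} \otimes \omega_X \cong \cE^{\vee\vee}(-4h)$ gives $\cE^\vee \cong \cE^{\vee\vee}(-4h - K_X)$) to rewrite it as $\h0(\cE^{\vee\vee}(-2h))^* = 0$. The main obstacle I anticipate is precisely the identification $\h0(\cE^{\vee\vee}(-h)) = \h0(\cE(-h))$ and the analogous control on $\cE^\vee$: one must be careful that the double-dual inclusion is an isomorphism in codimension one and argue cleanly that global sections of twists are unaffected, rather than invoking stability which is not assumed here. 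The rest is a diagram chase through the three cohomology long exact sequences.
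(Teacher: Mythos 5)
Your skeleton (the sequence $0 \to \cE \to \cE^{\vee\vee} \to \cT_E \to 0$ plus cohomological bookkeeping) matches the paper's, and your handling of $\h2(\cE^{\vee\vee}(-2h))$, $\h3(\cE^{\vee\vee}(-3h))$ and of $\h1(\cT_E(-2h))$, $\h2(\cT_E(-2h))$ is fine. But there are two genuine gaps at exactly the points you flag as delicate. First, the claim that $\H0(\cE^{\vee\vee}(-h)) = \H0(\cE(-h))$ because the two sheaves agree in codimension one is false for torsion-free sheaves: a section of $\cE^{\vee\vee}(-h)$ restricted to the open locus extends over the codimension-two locus only to a section of $j_*\big(\cE(-h)|_U\big)$, which contains $\cE^{\vee\vee}(-h)$ rather than $\cE(-h)$ --- the reflexive hull is precisely the sheaf that acquires these extra sections (compare $\h0(\cI_p)=0$ versus $\h0(\cI_p^{\vee\vee})=\h0(\cO)=1$). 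The cokernel of $\H0(\cE(-h))\hookrightarrow\H0(\cE^{\vee\vee}(-h))$ maps to $\H0(\cT_E(-h))$, which is typically nonzero, so some other input is required. The paper gets this vanishing from duality for rank $2$ reflexive sheaves: $\h0(\cE^{\vee\vee}(-h)) = \h3(\cE^{\vee\vee\vee}(h)\otimes\omega_X)$, and $\cE^{\vee\vee\vee}\simeq\cE^{\vee\vee}\otimes\det(\cE^{\vee\vee})^{-1}$ together with orientability turns this into $\h3(\cE^{\vee\vee}(-3h))$, which you have already shown to vanish.

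Second, and more seriously, you have no argument for $\h1(\cE^{\vee\vee}(-2h))=0$, which is the heart of the theorem. The long exact sequence gives $\h1(\cE^{\vee\vee}(-2h))\cong\h1(\cT_E(-2h))$ (since $\h1(\cE(-2h))=\h2(\cE(-2h))=0$), so this vanishing is \emph{equivalent} to part of what you must prove about $\cT_E$ and cannot be extracted from the sequence or from restriction arguments; ``Lemma \ref{lem:coho-t}-type manipulations'' presuppose that $\cT_E$ is already a rank $0$ instanton. The paper's key idea is the local-to-global Ext spectral sequence for $\Ext^{\bullet}(\cE^{\vee\vee},\cO_X(2h+K_X))$: reflexivity kills all terms except $\H0(\inext^1)$ and $\H1(\cE^{\vee\vee}(-2h))$ in degree one, while Serre duality identifies $\Ext^1(\cE^{\vee\vee},\cO_X(2h+K_X))$ with $\H2(\cE^{\vee\vee}(-2h))^{\vee}=0$; both summands therefore vanish, giving the last instanton condition \emph{and} local freeness simultaneously (so the detour through $c_3=0$ becomes unnecessary). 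Finally, a small but real error: purity of $\cT_E$ does not imply $\h0(\cT_E(-2h))=0$ (a pure $1$-dimensional sheaf of large degree has plenty of sections after this twist); that vanishing should instead come from the surjection $\H0(\cE^{\vee\vee}(-2h))\twoheadrightarrow\H0(\cT_E(-2h))$ supplied by the long exact sequence.
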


\begin{proof}
	We have the following standard short exact sequence of sheaves on $X$ 
	\begin{equation}\label{ses:double_dual}
		0 \rightarrow \cE \rightarrow \cE^{\vee\vee} \rightarrow \cT_\cE \rightarrow 0.
	\end{equation}
	If we tensor the sequence \eqref{ses:double_dual} by $\cO_X(th)$ then the cohomology sequence gives us the inequality
	$$ \h i (\cE^{\vee\vee} (th)) \leq \h i (\cE(th)) + \h i (\cT_\cE (th)) \ \ \ \text{for} \ \ i>0. $$
	Since $\cE$ is an instanton sheaf, $\h2 (\cE(-2h)) = \h3 (\cE(-3h))= 0$. Also, $\h2 (\cT_\cE (-2h)) = \h3 (\cT_\cE (-3h)) =  0$, since $\cT_\cE$ is a 1-dimensional sheaf. Therefore, $\h2 ({\cE^{\vee\vee} (-2h)}) = \h3 ({\cE^{\vee\vee} (-3h)}) = 0$.
	
	Next, notice that $\h0 (\cE^{\vee\vee} (-h)) = \h3 (\cE^{\vee\vee\vee} (h) \otimes \omega_X)$ by Proposition \cite[Theorem 2.5]{Har80}. Then, since $\cE^{\vee\vee}$ is a rank 2 reflexive sheaf, we have $\cE^{\vee\vee\vee} \simeq \cE^{\vee\vee} \otimes (\det(\cE^{\vee\vee}))^{-1}$ by \cite[Proposition 1.10]{Har80}. So, $\h0 (\cE^{\vee\vee} (-h)) = \h3 (\cE^{\vee\vee} (-3h)) = 0$.
	
	It remains to prove that $\h1 (\cE^{\vee\vee} (-2)) = 0$ to show that $\cE^{\vee\vee}$ is an $h$-instanton sheaf. For this purpose, let us consider the spectral sequence 
	$$
	\H {p} (\inext^q(\cE^{\vee\vee} , \cO_X(2h + K_X))) \Rightarrow \Ext^{p+q}(\cE^{\vee\vee} , \cO_X(2h + K_X)). 
	$$
	Since $\cE^{\vee\vee}$ is reflexive, we know that $\H {p} (\inext^q(\cE^{\vee\vee} , \cO_X(2h + K_X))) = 0 $ when $q>1$ and when $q=1$ and $p>0$ by  \cite[Proposition 1.1.10]{HL10}. Also, for $q=0$, we have
	$$\H {p} (\inext^0(\cE^{\vee\vee} , \cO_X(2h + K_X))) \simeq \H p (\cE^{\vee\vee\vee} (2h + K_X)) \simeq \H p (\cE^{\vee\vee} (-2h)).	$$
	So, $\H {p} (\inext^q(\cE^{\vee\vee} , \cO_X(2h + K_X))) $ vanishes also for $q = 0$ and $p = 2,3$. Therefore, the spectral sequence gives us the isomorphism 
	\begin{equation}\label{isomorphism from spectral}
		\Ext^{1}(\cE^{\vee\vee} , \cO_X(2h + K_X)) \simeq \H {0} (\inext^1(\cE^{\vee\vee} , \cO_X(2h + K_X))) \oplus \H1 (\cE^{\vee\vee} (-2h)).	
	\end{equation}
	Using Serre duality for the left-hand side of the isomorphism, we have
	\begin{align*}
		\Ext^{1}(\cE^{\vee\vee} , \cO_X(2h + K_X)) &\simeq \Ext^{2}( \cO_X(2h + K_X), \cE^{\vee\vee} (K_X))^{\vee} \\
		&\simeq \H2(\cE^{\vee\vee}(-2))^{\vee} = 0.
	\end{align*}
	So, both $\H{0}(\inext^1(\cE^{\vee\vee}, \cO_X(2h + K_X)))$ and $ \H1 (\cE^{\vee\vee} (-2h))$ vanish; the latter vanishing proves that $\cE^{\vee\vee}$ is an instanton sheaf, while the former vanishing proves that the singular locus of $\cE^{\vee\vee}$ is empty, which means that $\cE^{\vee\vee}$ is locally free.
	
	Finally, we argue that $\cT_\cE$ is a rank 0 instanton sheaf. The cohomology sequence of \eqref{ses:double_dual} implies the inequality
	\[ \h i (\cT_\cE (-2h)) \leq \h i (\cE^{\vee\vee} (-2h)) + \h {i+1} (\cE(-2h))  \ \ \ \text{for} \ \ i=0,1. \]
	Since we have the vanishing of the right-hand side of the inequality, $\cT_\cE$ is a rank 0 instanton sheaf.
\end{proof}

\begin{remark}
	Recall from \cite[Theorem 1.4]{AC23} that a torsion-free sheaf $\cE$ on $(X,h)$ is an $h$-instanton sheaf $\cE$ with defect $1$ if $\h0 (\cE(-h)) = \h3(\cE(-2h))=\h1(\cE(-2h))=\h2(\cE(-h)) =0$,  $\h1(\cE(-h))=\h2(\cE(-2h))=k$ and $\chi(\cE)=\chi(\cE(-3h))$.
	We note that the result in Theorem \ref{thm:double_dual_bundle} does not hold for a rank 2 orientable $h$-instanton sheaf with \textit{defect $1$}. Because $\H {2} (\inext^0(\cE^{\vee\vee} , \cO_X(h + K_X))) \simeq \h2 (\cE^{\vee\vee} (-2h))$ does not vanish in principle in the case of defect $1$. Due to this reason, the spectral sequence 
	$$\H {p} (\inext^q(\cE^{\vee\vee} , \cO_X(h + K_X))) \Rightarrow \Ext^{p+q}(\cE^{\vee\vee} , \cO_X(h + K_X))$$
	does not stabilize on the second page. Therefore we will not have an isomorphism similar to one in display \eqref{isomorphism from spectral}, which is necessary for the vanishing of $\h1(\cE^{\vee\vee}(-2h)).$
\end{remark}



\section{Ruled Fano threefolds of index 1 and Picard rank 2} \label{sec:ruled fanos}

This section will revise the relevant facts about ruled Fano threefolds of index 1 and Picard rank 2.

As we mentioned in the Introduction, these varieties occur in 5 deformation classes given by the projectivization of the rank 2 bundles $F_c$ over $\p2$ for $0 \leq c \leq 4$, listed in Table \ref{table: for F}. We set $X_c:=\mathbb{P}_{\p2}\big(F_c\big)$ and let $\pi:X_c\to\p2$ the the projection induced from $F_c$.

It is known that the Chow ring of $X_c$ is given by
\begin{equation}\label{eqn:chow_ring_projectivization}
	A(X_c)=A\left(\p2 \right)[\xi] / \left( \xi^{r}  + c_{1}(F_c^{\vee}) \xi^{r-1} +c_{2} (F_c^{\vee})\right)
\end{equation}
where $\xi = c_1(\cO_{X_{c}} (1))$, see \cite[Theorem 9.6]{EH13}; we remark that $\xi$ restricts to the hyperplane class on each fiber.

We have two short exact sequences
\begin{equation}\label{ses:tangentbundle_1}
	0 \rightarrow T_{X_c \mid \p2} \rightarrow T_{X_c} \rightarrow \pi^{*} T_{\p2} \rightarrow 0
\end{equation}
\begin{equation}\label{ses:tangentbundle_2}
	0 \rightarrow \mathcal{O}_{X_c} \rightarrow \pi^{*} F_c^{\vee} \otimes \cO_{X_c}(\xi) \rightarrow T_{X_c \mid \p2} \rightarrow 0
\end{equation}
where $T_{X_c \mid \p2}$ is the relative tangent sheaf and $\pi : X_c \rightarrow \p2$. One can compute the Chern classes of $T_{X_c}$ using short exact sequences \eqref{ses:tangentbundle_1} and \eqref{ses:tangentbundle_2}. Namely,
\begin{equation}\label{eqn:1_chern_class_tangent}
	c_1(T_{X_c}) = c_1(\pi^{*} T_{\p2}) + c_1 \left( \pi^{*} F_c^{\vee} \otimes \cO_{X_c}(\xi) \right)
\end{equation}
\begin{equation}\label{eqn:2_chern_class_tangent}
	c_2(T_{X_c}) = c_2 \left( \pi^{*} F_c^{\vee} \otimes \cO_{X_c}(\xi) \right) + c_2(\pi^{*} T_{\p2}) + c_1 \left( \pi^{*} F_c^{\vee} \otimes \cO_{X_c}(\xi) \right) c_1(\pi^{*} T_{\p2})
\end{equation}

Set $f:=c_1(\pi^*(\cO_{\p2}(1)))$; then, by the description in display \eqref{eqn:chow_ring_projectivization}, 
$\operatorname{Pic}(X_c)=\langle \xi , f \rangle$ with relations $\xi^{2} = 2 \xi f - c  f^2 $ and $ f^{3}=0$. Also, using the equations in \eqref{eqn:1_chern_class_tangent} and in \eqref{eqn:2_chern_class_tangent}, we have
$$ c_1(T_{X_c}) = 2\xi + f ~~ {\rm and} ~~ c_2 (T_{X_c}) = 6 \xi f  - 3f^2. $$
Since $\pi_{*} (\xi f^2) = f^2$ by \cite[Lemma 9.7]{EH13},
$\operatorname{deg}(\xi f^2) = 1$. Therefore, we have
$$ \operatorname{deg}(\xi^3) = 4-c ~~,~~ \operatorname{deg}(\xi^2 f) = 2 ~~{\rm and}~~ \operatorname{deg}(f^3) = 0. $$

We also have the following short exact sequences:
\begin{equation}\label{ses:general_ses_for_our_variety_1}
	0 \rightarrow \pi^{*} \Omega_{\mathbb{P}^{2}} \rightarrow \mathcal{O}_X (-f)^{\oplus3} \rightarrow \mathcal{O}_{X_c} \rightarrow 0
\end{equation}
\begin{equation}\label{ses:general_ses_for_our_variety_2}
	0 \rightarrow \mathcal{O}_{X_c} (-3 f) \rightarrow \mathcal{O}_{X_c}(-2 f)^{\oplus3} \rightarrow \pi^{*} \Omega_{\mathbb{P}^{2}} \rightarrow 0
\end{equation}
\begin{equation}\label{ses:general_ses_for_our_variety_3}
	0 \rightarrow \cO_{X_c}(2f - \xi) \rightarrow \pi^{*} F_c  \rightarrow \mathcal{O}_{X_c}(\xi) \rightarrow 0
\end{equation}


The sequence in display \eqref{ses:general_ses_for_our_variety_1} is the pull-back of dual Euler sequence on $\p2$; the one in display \eqref{ses:general_ses_for_our_variety_2} is the dual of \eqref{ses:general_ses_for_our_variety_1} twisted by $\cO_{X_c}(-3 f)$; finally, the sequence in display \eqref{ses:general_ses_for_our_variety_3} is the dual of sequence \eqref{ses:tangentbundle_2} twisted by $\cO_{X_c}(\xi)$, by noticing that $T_{X_c \mid \p2} \simeq \cO_{X_c}(2\xi - 2f).$ 

Since, for each $c$, $F_c$ is globally generated, $\cO_{X_{c}}(\xi)$ is globally generated. Similarly, $\cO_{X_{c}}(f)$ is globally generated. Also, $\cO_{X_{c}}(h) \simeq \cO_{X_{c}}(\xi + f)$ is ample by \cite[Proposition 2.1]{SW90}. In this section, we mean by $(X_c, h)$ the polarized variety $X_c =\mathbb{P}_{\p2}(F_c)$ with the ample bundle $h = \xi + f$.

\begin{lemma}\label{lem:cohomology_of_line_bundles}
	Let $\mathcal{L} := \mathcal{O}_{X_c} \left( \lambda_{1} \xi+ \lambda_{2} f \right) $ be a line bundle on $X_c$. Then
	\[
	\H{i} (X_c, \mathcal{L} ) \simeq \begin{cases} 
		H^{i} \left( \mathbb{P}^{2}, S^{\lambda_{1}}(F_c) \otimes \mathcal{O}_{\mathbb{P}^{2}} \left( \lambda_{2} \right) \right) & \mbox{if } \lambda_1 \geq 0 \\
		0 & \mbox{if } \lambda_1 = -1 \\
		H^{i-1} \left( \mathbb{P}^{2},  S^{-\lambda_{1}-2}(F_c)^{\vee} \otimes \mathcal{O}_{\p2} \left( \lambda_{2} -2 \right) \right) & \mbox{if } \lambda_1 \leq -2.
	\end{cases}
	\]
	where $S^m F_c$ stands for the m-th symmetric power of $F_c$ for any $m \geq 0$.
\end{lemma}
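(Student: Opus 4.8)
The plan is to compute the higher direct images $R^i\pi_*\mathcal{L}$ and then apply the Leray spectral sequence, which degenerates since $\pi$ has relative dimension $1$. Concretely, writing $\mathcal{L} = \mathcal{O}_{X_c}(\lambda_1\xi)\otimes\pi^*\mathcal{O}_{\mathbb{P}^2}(\lambda_2)$, the projection formula gives $R^i\pi_*\mathcal{L} \simeq \big(R^i\pi_*\mathcal{O}_{X_c}(\lambda_1\xi)\big)\otimes\mathcal{O}_{\mathbb{P}^2}(\lambda_2)$. So the first step is to recall the standard computation of $R^i\pi_*\mathcal{O}_{X_c}(\lambda_1\xi)$ for the projectivization of a rank $2$ bundle. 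Using the relative Euler sequence and the fact that for a $\mathbb{P}^1$-bundle $\mathbb{P}(F)$ (in Grothendieck's convention, as in \cite[Ch.~9]{EH13}) one has $\pi_*\mathcal{O}(\lambda_1\xi) = S^{\lambda_1}F$ for $\lambda_1\ge 0$, $R^1\pi_*\mathcal{O}(\lambda_1\xi)=0$ for $\lambda_1\ge -1$, $\pi_*\mathcal{O}(\lambda_1\xi)=0$ for $\lambda_1\le -1$, and $R^1\pi_*\mathcal{O}(\lambda_1\xi) \simeq S^{-\lambda_1-2}F^\vee\otimes(\det F)^{-1}$ for $\lambda_1\le -2$. (The twist by $\det F^\vee$ comes from relative Serre duality $R^1\pi_*\mathcal{O}(\lambda_1\xi)\simeq \big(\pi_*\mathcal{O}((-\lambda_1-2)\xi)\otimes\omega_{\pi}^\vee\big)^\vee$ with $\omega_\pi = \mathcal{O}(-2\xi)\otimes\pi^*\det F$.)

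Next I would match the normalization to this paper's conventions. The subtlety is that the lemma's statement has no $\det F_c$-twist appearing, so one must check that with the chosen $F_c$ from Table~\ref{table: for F} the relevant determinant twist is absorbed. Indeed $c_1(F_c) = $ (the index-normalization forces) $\mathcal{O}_{\mathbb{P}^2}(2)$... actually here one should simply verify directly from the Chow-ring relation $\xi^2 = 2\xi f - cf^2$ recorded after \eqref{eqn:chow_ring_projectivization}, which encodes $c_1(F_c^\vee) = -2f$ hence $\det F_c = \mathcal{O}_{\mathbb{P}^2}(2)$, together with the shift $\lambda_2 \mapsto \lambda_2 - 2$ appearing in the third case. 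So the clean statement is that for $\lambda_1\le -2$, $R^1\pi_*\mathcal{O}(\lambda_1\xi) \simeq S^{-\lambda_1-2}(F_c)^\vee\otimes\mathcal{O}_{\mathbb{P}^2}(-2)$, and the $\mathcal{O}_{\mathbb{P}^2}(-2)$ combines with $\mathcal{O}_{\mathbb{P}^2}(\lambda_2)$ to give the $\lambda_2-2$ in the lemma. This bookkeeping is the one place where a sign or twist error would creep in, so I would do it carefully using the explicit resolutions of $F_c$ in Table~\ref{table: for F} as a consistency check.

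Finally I would assemble the conclusion via Leray. For $\lambda_1\ge 0$: only $R^0\pi_*\mathcal{L}$ survives, so $\H{i}(X_c,\mathcal{L}) \simeq \H{i}(\mathbb{P}^2, S^{\lambda_1}(F_c)\otimes\mathcal{O}_{\mathbb{P}^2}(\lambda_2))$. For $\lambda_1 = -1$: both $R^0$ and $R^1$ vanish, so all cohomology vanishes. For $\lambda_1\le -2$: only $R^1\pi_*\mathcal{L}$ survives, and the Leray spectral sequence gives the shift $\H{i}(X_c,\mathcal{L}) \simeq \H{i-1}(\mathbb{P}^2, R^1\pi_*\mathcal{L}) \simeq \H{i-1}(\mathbb{P}^2, S^{-\lambda_1-2}(F_c)^\vee\otimes\mathcal{O}_{\mathbb{P}^2}(\lambda_2-2))$. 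Since $\mathbb{P}^2$ has cohomological dimension $2$ and the fibers are $\mathbb{P}^1$, the $E_2$ page has at most two nonzero rows and the differentials vanish for dimension reasons, so no degeneration argument beyond this is needed. The main obstacle, as noted, is purely the normalization matching in the $\lambda_1\le -2$ case; everything else is a formal application of the projection formula and base change for the flat morphism $\pi$.
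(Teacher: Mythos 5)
Your proposal is correct and follows essentially the same route as the paper: both compute $R^j\pi_*\mathcal{O}_{X_c}(\lambda_1\xi)$ via the standard projective-bundle formulas (the paper cites \cite[Exercise 8.4, Chapter III]{Har77} where you invoke relative Serre duality, yielding the same $R^1\pi_* \simeq S^{-\lambda_1-2}(F_c)^\vee\otimes(\det F_c)^{-1}$), combine with the projection formula, and conclude by the Leray spectral sequence, which has only one nonzero row in each case. Your normalization check $\det F_c=\mathcal{O}_{\mathbb{P}^2}(2)$ from the relation $\xi^2=2\xi f-cf^2$ is exactly the bookkeeping that produces the $\lambda_2-2$ twist in the paper's third case.
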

\begin{proof}
	Let us use Leray spectral sequence
	$$\H{i} \left(\p2,  R^{j} \pi_{*}\mathcal{L} \right) \Longrightarrow \H{i+j} (X,\mathcal{L}), $$
	and note that $\mathcal{L}=\mathcal{O}_{X_c}\big(\lambda_{1}\xi\big)\otimes\pi^*(\cO_{\p2}(\lambda_2))$. We analyze each of the three cases, using \cite[Exercise 8.4, Chapter III]{Har77}:
	\begin{itemize}
		\item if $\lambda_{1} \geq 0$, then $\pi_{*} \left( \mathcal{O}_{X_c} \left(\lambda_{1} \xi \right) \right) = S^{\lambda_{1}} F$ and $R^{1} \pi_{*} \left( \mathcal{O}_{X_c} \left(\lambda_{1} \xi \right) \right) = 0$. The desired isomorphism follows from the projection formula and the Leray spectral sequence. \medskip
		
		\item when $\lambda_{1} = -1$ we have that $\pi_{*} \left( \mathcal{O}_{X_c} \left( - \xi \right) \right) = R^{1} \pi_{*} \left( \mathcal{O}_{X_c} \left( - \xi \right) \right) = 0$, so the Leray spectral sequence implies that $\H{i}(X_c,\mathcal{L})=0$ for all $i\ge0$.  \medskip
		
		\item if $\lambda_{1} \leq -2$, then $\pi_{*}\left(\mathcal{O}_{X_c} \left(\lambda_{1} \xi \right)\right)=0$ and
		$$ R^{1}\pi_{*}\left(\mathcal{O}_{X_c} \left( \lambda_{1} \right) \right)  =\pi_{*} \left( \mathcal{O}_{X_c} \left( -2 - \lambda_{1} \right) \right)^{\vee} \otimes \left( {\wedge }^{2}  F_c \right)^{\vee}  =S^{-\lambda_{1}-2}(F_c)^{\vee} \otimes \cO_{\p2} (-2f).$$
		Again, the desired isomorphism follows from the projection formula and the Leray spectral sequence.
	\end{itemize}
\end{proof}

The following technical result will be useful in the next section.

\begin{proposition}\label{prop:non effective divisors}
	If $a,b\in\bZ$ satisfy $a\geq0$ and $(9-c)a+4b\leq c-15$ with $c\in\{0,\dots,4\}$, then
	\begin{enumerate}
		\item $a\xi + (b+2)f$ is not effective for each $c$.
		\item $(a+2)\xi + (b+1)f$ is not effective for $c = 2,3,4$.
	\end{enumerate}
\end{proposition}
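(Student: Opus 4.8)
The plan is to compute the relevant cohomology groups using Lemma~\ref{lem:cohomology_of_line_bundles}, reducing everything to symmetric powers of $F_c$ on $\mathbb{P}^2$, and then to show these groups vanish. First I would observe that a line bundle $\cO_{X_c}(\lambda_1\xi+\lambda_2 f)$ is effective only if $\H{0}$ is nonzero, so it suffices to prove $\H{0}(X_c,\cO_{X_c}(\lambda_1\xi+\lambda_2 f))=0$ in each case. For item~(1) we have $\lambda_1=a\geq0$, so Lemma~\ref{lem:cohomology_of_line_bundles} gives $\H{0}(X_c,\cO_{X_c}(a\xi+(b+2)f))\simeq H^0(\mathbb{P}^2, S^a(F_c)\otimes\cO_{\mathbb{P}^2}(b+2))$. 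Since $F_c$ is a globally generated rank 2 bundle on $\mathbb{P}^2$ with $c_1(F_c)$ computable from Table~\ref{table: for F} (namely $c_1(F_c)=2$ for $c\in\{0,1,2\}$, and $c_1(F_c)=2$ in all cases — one checks $\deg\xi^3=4-c$ forces $c_1(F_c)=2$ throughout), the symmetric power $S^a(F_c)$ is globally generated with slope $\mu(S^a F_c)=a$; twisting by $\cO_{\mathbb{P}^2}(b+2)$ one needs $H^0$ to vanish, which follows once the generic splitting type on a line forces negativity, i.e. once $a+(b+2)<0$ together with a bound on how negative the other summand can be. The hypothesis $(9-c)a+4b\leq c-15$ is exactly engineered to make $b+2$ sufficiently negative relative to $a$ to kill all of $S^a F_c$'s ``Harder--Narasimhan pieces'' after the twist.

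More concretely, I would argue as follows. A global section of $S^a(F_c)\otimes\cO(b+2)$ restricts to a global section on every line $\ell\subset\mathbb{P}^2$; on a general line $F_c|_\ell\cong\cO_\ell(a_1)\oplus\cO_\ell(a_2)$ with $a_1+a_2=c_1(F_c)\cdot\ell=2$ and $a_1,a_2\geq0$ by global generation, so the possible splitting types are $(2,0),(1,1)$. Then $S^a(F_c)|_\ell$ has summands $\cO_\ell(ia_1+(a-i)a_2)$ for $0\leq i\leq a$, with maximal degree $\leq 2a$. After twisting by $\cO(b+2)$, a section restricted to $\ell$ lies in $\bigoplus_i \cO_\ell(ia_1+(a-i)a_2+b+2)$, all of whose summands have degree $\leq 2a+b+2$. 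The inequality $(9-c)a+4b\leq c-15$ with $c\leq4$ gives $9a+4b\leq c-15+ca \leq 4-15+4a$, hence $5a+4b\leq -11$, so $b\leq (-11-5a)/4<0$; combined with $a\geq0$ one gets $2a+b+2 < 0$ after a short check (e.g. $a=0$ gives $b\leq-3$; for $a\geq1$ use $5a+4b\leq-11\Rightarrow b\leq -(11+5a)/4\Rightarrow 2a+b+2\leq 2a+2-(11+5a)/4 = (3a-3)/4 - \text{wait, recompute}$). I would do this elementary arithmetic carefully to conclude every summand has strictly negative degree, hence $H^0(\ell,\cdot)=0$ for general $\ell$, hence the section is identically zero.

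For item~(2), with $c\in\{2,3,4\}$ and first index $\lambda_1=a+2\geq2$, Lemma~\ref{lem:cohomology_of_line_bundles} again gives $\H{0}(X_c,\cO_{X_c}((a+2)\xi+(b+1)f))\simeq H^0(\mathbb{P}^2,S^{a+2}(F_c)\otimes\cO_{\mathbb{P}^2}(b+1))$. The same line-restriction argument applies: $S^{a+2}(F_c)|_\ell$ has summands of degree $\leq 2(a+2)=2a+4$, so after twisting one needs $2a+4+(b+1)=2a+b+5<0$. From $5a+4b\leq-11$ (valid for $c\leq4$, derived as above) one gets $b\leq -(11+5a)/4$, so $2a+b+5\leq 2a+5-(11+5a)/4 = (8a+20-11-5a)/4=(3a+9)/4$, which is positive — so the naive bound is insufficient and I must use the stronger hypothesis for $c=2,3,4$ more carefully. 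The point is that for these $c$ the hypothesis gives $(9-c)a+4b\leq c-15$ with $9-c\in\{5,6,7\}$, i.e. at least $5a+4b\leq c-15\leq-11$ but when $c=2$ it is $7a+4b\leq-13$, when $c=3$ it is $6a+4b\leq-12$, when $c=4$ it is $5a+4b\leq-11$; these are strong enough precisely because for small $c$ (where $F_c$ is ``more positive'' — wait, larger $c_2$) the relevant twist is $b+1$ rather than $b+2$, one degree better, and the coefficient $9-c$ is correspondingly larger. I expect the main obstacle to be pinning down the sharp numerics: verifying that in each case $c\in\{2,3,4\}$ the given inequality forces the top splitting-type degree of $S^{a+2}(F_c)|_\ell$ plus $(b+1)$ to be negative. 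This is a finite, case-by-case check on the coefficient $9-c$ versus the maximal slope $2$ of $F_c$ along a line, and I would organize it as a single lemma: \emph{if $F$ is a globally generated rank $2$ bundle on $\mathbb{P}^2$ with $c_1(F)=2$ and $m,d\in\bZ$ with $m\geq0$ and $2m+d<0$, then $H^0(\mathbb{P}^2,S^m(F)\otimes\cO(d))=0$}, and then simply check that the hypotheses of items~(1) and~(2) each imply the relevant instance $2m+d<0$.
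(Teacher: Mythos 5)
Your overall strategy --- pushing forward to $\p2$ via Lemma~\ref{lem:cohomology_of_line_bundles} and killing sections of $S^m(F_c)\otimes\cO_{\p2}(d)$ by restriction to a general line --- is genuinely different from the paper's proof, which is purely intersection-theoretic: the authors show that $k\xi+f$ is ample for every $k>0$ and then verify that $D\cdot(k\xi+f)^2<0$ for $k\gg0$, which is impossible for an effective divisor $D$. Your route could in principle work, but as written it has a concrete gap in the final numerical reduction. The lemma you propose (vanishing of $H^0(S^m(F)\otimes\cO_{\p2}(d))$ whenever $2m+d<0$) is fine, but the hypotheses of the proposition do \emph{not} imply $2m+d<0$ in all cases. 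For item (1) take $c=4$, $a=2$, $b=-6$: then $(9-c)a+4b=-14\le -11=c-15$, yet $2a+(b+2)=0$. To repair this for $c\ge1$ you would need the semistability of $F_c$ (which you do not establish), so that the generic splitting type is $(1,1)$ and the relevant bound improves to $a+(b+2)<0$; this sharper inequality does follow from the hypothesis when $c\ge1$, and for $c=0$ the crude bound $2a+(b+2)<0$ does hold since $9a+4b\le-15$.

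Item (2) is worse: even with the sharp generic splitting type $(1,1)$ there are admissible boundary cases where the degree on a general line is exactly $0$, for instance $c=4$, $a=0$, $b=-3$ (indeed $5\cdot0+4\cdot(-3)=-12\le-11$), where one must show $H^0\big(S^2(F_4)\otimes\cO_{\p2}(-2)\big)=H^0\big(S^2(F_4(-1))\big)=0$ while $S^2(F_4(-1))$ restricts trivially to a general line; similar boundary cases occur for $c=3$, $a=0$, $b=-3$ and $c=4$, $a=1$, $b=-4$. These vanishings are true but need a separate argument (e.g.\ $E\otimes E\cong S^2E\oplus\cO_{\p2}$ together with simplicity of the stable bundle $E=F_c(-1)$, and an analogous computation for $S^3$), none of which appears in your plan. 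So the proof does not close as stated: you must either supply the semistability of $F_c$ plus a slope-zero analysis for the boundary cases of item (2), or switch to the Nakai--Moishezon computation used in the paper, which sidesteps all of these issues.
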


\begin{proof}
	First, notice that $\xi$ and $f$ are globally generated divisors since both $F_c$ and $\cO_{\p2}(1)$ are globally generated bundles. Then, $k\xi+f$ is a globally generated divisor for any $k>0$ as the sum of globally generated divisors. Then, for any irreducible curve $C \subseteq X_c$, $(k\xi+f) \cdot C = (\xi +f) \cdot C + (k-1)\xi \cdot C$. Since $(\xi +f)$ is ample, $(\xi +f) \cdot C > 0$ by Nakai--Moishezon Criterion. Since $(k-1)\xi$ is globally generated, $(k-1)\xi \cdot C \geq 0$. So, $k\xi+f) \cdot C > 0$, and so, $(k\xi+f)$ is ample by \cite[Corollary 1.2.15]{Laz17}.
	
	Now consider the intersection product of $a\xi+(b+2)f$ with $(k \xi + f)^2$, which is $((4-c)a +2b +4)k^2 + (4a+2b+4)k +a$. The hypotheses imply that 
	$$ ((4-c)a +2b +4) \leq \frac{c-15}{2} - \frac{1+c}{2}a +4 \leq \frac{c-7}{2} $$ 
	Since $c-7<0$, the Nakai--Moishezon Criterion implies that $a\xi + (b+2)f$ cannot be effective.
	
	Next, consider the intersection product of $(a+2) \xi + (b+1)f$ with $(k \xi + f)^2$, which is $((4-c)a +2b + 10 -2c)k^2 + (4a+2b+10)k +a+2$. Then we have, using the hypotheses on $a$ and $b$:
	$$ ((4-c)a +2b +10-2c) \leq \frac{c-15}{2} - \frac{1+c}{2}a + 10-2c \leq \frac{5-3c}{2}. $$ 
	Since, $5-3c< 0$ for $c=2,3,4$, the same argument applies and $(a+2) \xi + (b+1)f$ cannot be effective.
\end{proof}

To complete this section, let $M$ be a curve in the class of $\xi f$ in $A^2(X_c)$; since $M$ is a complete intersection, 
its structure sheaf admits the following 
resolution:
\begin{equation}\label{ses: resolution of elliptic}
	0 \lra \cO_X (-\xi - f) \lra \cO_X(-\xi) \oplus \cO_X(-f) \lra \cO_X \lra \cO_M \lra 0.
\end{equation}
In particular, we have $\mathcal{N}_{M|X} \simeq \cO_M (1) \oplus  \cO_M (2)$, thus $\det(\mathcal{N}_{M|X})=\cO_M (3)$. 

Moreover, $\xi f (\xi +f) = 3$ and 
$$ \chi(\cO_M) = \chi(\cO_X) - \chi(\cO_X(-f)) - \chi(\cO_X(-\xi)) + \chi(\cO_X (-\xi - f)) = 1 $$
since $\chi(\cO_X(-f))=\chi(\cO_X(-\xi))=\chi(\cO_X(-\xi - f))=0$ by Lemma \ref{lem:cohomology_of_line_bundles}. Therefore, $M$ is a rational curve of degree 3 in $X_c$.

In what follows we will denote by $\Lambda_M$ the component of the Hilbert scheme ${\rm Hilb}^{3t+1}(X_c)$ containing the complete intersection curves described above. Since
$$ \h0(\mathcal{N}_{M|X})=5 ~~ {\rm and} ~~ \h1(\mathcal{N}_{M|X})=0, $$
we conclude that $\Lambda_M$ is smooth of dimension equal to 5. Note that generic elements of $\Lambda_M$ do not intersect each other. If they intersect, two generic elements $\xi_0 f_0$ and $\xi_1 f_1$ of $\Lambda_M$ have intersection point $p$. Notice that, the divisor $f_1$ will intersect $\xi_0 f_0$ at $p$ and then $\xi_1$ will go through $p$. Then any element $\xi_i f_1$ of $\Lambda_M$ will intersect  $\xi_0 f_0$ at $p$; that is, $\xi_i$ goes through $p$ for any $i$. This is a contradiction because $\cO_{X_c}(\xi) $ is globally generated.

\begin{lemma} \label{lem:serreconstruction}
	For each $c=0,1,2,3,4$ and $m\ge2$, there exists a pair $(\mathcal{F},s)$ consisting of rank 2 locally free sheaf $\mathcal{F}$ on $X_c$ and a global section $s\in \H0(\mathcal{F})$ vanishing exactly on a disjoint union of $m$ curves in $\Lambda_M$.
\end{lemma}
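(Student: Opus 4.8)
The plan is to apply the Serre construction (the Hartshorne--Serre correspondence) to build $\mathcal{F}$ from a disjoint union $Z = M_1 \sqcup \cdots \sqcup M_m$ of $m$ general members of $\Lambda_M$. Recall that we observed above that two generic elements of $\Lambda_M$ do not intersect, so such a disjoint union indeed exists for every $m\ge 2$ (and $Z$ moves in a $5m$-dimensional family). To run the Serre construction producing a \emph{locally free} rank 2 sheaf, I need two inputs: first, that $Z$ is a local complete intersection curve of pure codimension 2 (clear, since each $M_i$ is the complete intersection cut out by sequence \eqref{ses: resolution of elliptic}, and the $M_i$ are disjoint); and second, a line bundle $\cO_{X_c}(D)$ such that $\det(\mathcal{N}_{Z|X_c}) \simeq \omega_{X_c}(D)\otimes\cO_Z$ together with the vanishing $\H2(\cO_{X_c}(D-\xi-f))=0$ (or more precisely the surjectivity needed to lift the Serre class from $\Ext^1$ on $Z$ to $\Ext^1$ on $X_c$; see e.g. \cite[Theorem 2.5]{Har80} or Arrondo's survey).

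Concretely, I would proceed as follows. Step 1: compute $\omega_{X_c}$. Since $c_1(T_{X_c}) = 2\xi+f$, we have $K_{X_c} = -2\xi - f$, i.e. $4h + K_{X_c} = 4\xi+4f-2\xi-f = 2\xi+3f$, so an orientable rank 2 sheaf has $c_1 = 2\xi+3f$ — this matches the $c_1$ appearing in both Main Theorems and pins down the determinant we want: $\det\mathcal{F} = \cO_{X_c}(2\xi+3f)$. Step 2: check the determinant condition on $Z$. On each component $\det(\mathcal{N}_{M_i|X_c}) = \cO_{M_i}(3) = \cO_{M_i}(3\xi+3f)\otimes\cdots$ — more carefully, from $\mathcal{N}_{M|X_c}\simeq \cO_M(1)\oplus\cO_M(2)$ with the twists by $\xi$ and $f$ read off \eqref{ses: resolution of elliptic}, one gets $\det\mathcal{N}_{M|X_c} \simeq \cO_{X_c}(\xi+\xi+2f)\otimes\cO_M = \cO_{X_c}(2\xi+2f)\otimes\cO_M$. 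Then $\omega_{X_c}\otimes\det\mathcal{N}_{M|X_c} = \cO_{X_c}(-2\xi-f+2\xi+2f)\otimes\cO_M = \cO_{X_c}(f)\otimes\cO_M$, so taking $D = 2\xi+f$ we get $\omega_{X_c}(D) = \cO_{X_c}(2\xi) $... I would recompute this twist carefully so that $\det\mathcal{F} = \cO_{X_c}(2\xi+3f)$ comes out, adjusting $D$ accordingly; the point is simply that $\det\mathcal{N}_{Z|X_c}$ extends to a line bundle on $X_c$, which it visibly does. Step 3: verify the cohomological hypothesis $\H2(\cO_{X_c}(D - 2\xi - 3f)) = \H2(\cO_{X_c}(-f))=0$, which holds by Lemma \ref{lem:cohomology_of_line_bundles} (the $\lambda_1 = 0$, $\lambda_2 = -1$ case gives $\H2(\cO_{X_c}(-f)) = H^2(\bP^2,\cO_{\bP^2}(-1)) = 0$). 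Step 4: conclude that there is a rank 2 locally free $\mathcal{F}$ with $\det\mathcal{F}=\cO_{X_c}(2\xi+3f)$ fitting in $0\to\cO_{X_c}\to\mathcal{F}\to\cI_{Z|X_c}(2\xi+3f)\to 0$, i.e. a section $s\in\H0(\mathcal{F})$ whose zero locus is exactly $Z$.

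The main obstacle — and the only step requiring genuine care rather than bookkeeping — is the \textbf{local freeness} of $\mathcal{F}$, equivalently checking that the Serre class in $\Ext^1(\cI_Z(2\xi+3f),\cO_{X_c})$ restricts to a nowhere-vanishing section of the rank 2 bundle $\sExt^1(\cI_Z(2\xi+3f),\cO_{X_c}) \simeq \det\mathcal{N}_{Z|X_c}\otimes(\text{twist})$ along $Z$. This is automatic when $Z$ is a smooth (or at least l.c.i.) curve with $\det\mathcal{N}_{Z|X_c}$ generated by the restriction of a globally generated line bundle, which is why I emphasized $\xi$ and $f$ globally generated in Step 2 above; alternatively one cites the standard criterion (\cite[Theorem 1.1]{Har80} / the "subcanonicity is not needed, only global generation of the extending line bundle" form of Serre correspondence) directly, since $Z$ is here even a disjoint union of smooth rational curves. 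Everything else is a direct application of Lemma \ref{lem:cohomology_of_line_bundles} and the intersection-theory relations $\xi^2 = 2\xi f - cf^2$, $f^3 = 0$ already recorded in Section \ref{sec:ruled fanos}.
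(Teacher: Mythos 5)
Your overall strategy (Serre construction along a disjoint union $Z=M_1\sqcup\dots\sqcup M_m$ of general members of $\Lambda_M$) is exactly the paper's, but there is a genuine obstruction to the way you set it up: you insist on $\det\mathcal F=\cO_{X_c}(2\xi+3f)$, and no such bundle can have a section vanishing exactly on $Z$ in codimension two. If $s\in \H0(\mathcal F)$ vanishes exactly on $Z$, then $\mathcal F\otimes\cO_Z\simeq\mathcal N_{Z|X_c}$, so $\det\mathcal F$ must restrict on each $M_i$ to $\det\mathcal N_{M_i|X_c}=\cO_{X_c}(\xi+f)\otimes\cO_{M_i}=\cO_{M_i}(3)$ (note your Step 2 bookkeeping is off here: the determinant of the normal bundle of the complete intersection $M=\xi\cap f$ is $\cO_{X_c}(\xi+f)\otimes\cO_M$, of degree $\xi^2f+\xi f^2=2+1=3$, not $\cO_{X_c}(2\xi+2f)\otimes\cO_M$). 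Since $(2\xi+3f)\cdot\xi f=7\neq 3$, the required isomorphism fails. Equivalently, $\sExt^1(\cI_Z(2\xi+3f),\cO_{X_c})\simeq \omega_Z\otimes\omega_{X_c}^{\vee}\otimes\cO_Z(-2\xi-3f)\simeq\bigoplus_i\cO_{M_i}(-2+5-7)=\bigoplus_i\cO_{M_i}(-4)$ has no nonzero global sections, so no extension class can be ``nowhere vanishing along $Z$'' and no extension is locally free. Your hedge that one can ``adjust $D$ accordingly'' while keeping $\det\mathcal F=\cO_{X_c}(2\xi+3f)$ cannot be carried out.

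The fix is what the paper does: take extensions $0\to\cO_{X_c}\to\mathcal F\to\cI_Z(2\xi-f)\to0$, where $(2\xi-f)\cdot\xi f=3$ matches $\det\mathcal N_{M_i|X_c}$; then $\Ext^1(\cI_Z(2\xi-f),\cO_{X_c})\simeq \H2(\cI_Z(-2f))^{*}\simeq\bigoplus_i\H0(\cO_{M_i})$ is $m$-dimensional, and any class with all $m$ components nonzero gives a locally free $\mathcal F$. The lemma itself does not ask for orientability; that is achieved only later by setting $\cE=\mathcal F(2f)$, whose first Chern class is then $2\xi+3f$ while its distinguished section is still the section of $\mathcal F$ vanishing on $Z$. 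Apart from this determinant issue, your remaining ingredients (disjointness of generic members of $\Lambda_M$, the vanishing from Lemma \ref{lem:cohomology_of_line_bundles}, and the appeal to the Hartshorne--Serre criterion) are in order.
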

\begin{proof}
	We use the Serre construction, see \cite{Ar07} for details. More precisely, consider the 1-dimensional subscheme $Z\subset X_c$
	\begin{equation}\label{XInstanton}
		Z:=\bigcup_{i=1}^{m}M_i  ~~,~~ {\rm where}~~ M_{i}\in \Lambda_M ~~ {\rm and}~~ M_i\cap M_j=\emptyset ~~ {\rm when}~~ i\ne j,
	\end{equation}
	and extensions of the form
	\begin{equation} \label{eq:bdl-f}
		0 \lra \cO_{X_c} \lra \cF \lra \cI_{Z}(2\xi-f) \lra 0.
	\end{equation}
	Using Serre duality we obtain
	\begin{align*}
		\Ext^1(\cI_{Z}(2\xi-f),\cO_{X_c}) & \simeq \H2(\cI_{Z}(-2f))^* \simeq \bigoplus_{i=1}^m \H1(\cO_{M_i}\otimes\cO_{X_c}(-2f))^* \\
		&  \simeq \bigoplus_{i=1}^m \H1(M_i,\cO_{M_i}(-2))^*  \simeq \bigoplus_{i=1}^m \H0(M_i,\cO_{M_i}).
	\end{align*}
	Therefore $\dim\Ext^1(\cI_{Z}(2\xi-f),\cO_{X_c})=m$, and the existence of locally free extensions is guaranteed by the existence of non-vanishing sections in $\bigoplus_{i=1}^m H^0(M_i,\cO_{M_i})$.
	
	When $c\ge1$, a general curve $Z$ of class $|m\cdot\xi f|$ is not contained in a divisor of class $|2\xi|$ when $m\ge 3$; therefore, we can assume that $\h0(\cI_{Z}(2\xi))=0$. When $c=0$, then $2\xi f=\xi^2$, so we need $Z$ to have at least 5 connected components to make sure that $\h0(\cI_{Z}(2\xi))=0$.
	
	If $\h0(\cI_{Z}(2\xi))=0$, then $\h0(\cI_{Z}(2\xi-f))=0$ and $h^0(\cF)=1$. It follows that the family of sheaves constructed as above has dimension equal to
	\begin{equation} \label{eq:dim.family}
		\dim(\Lambda_M)m+(m-1) = 6m-1 ;
	\end{equation}
	the first summand corresponds to the choice of $m$ disjoint curves in $\Lambda_M$, while the second summand corresponds to $\dim\Ext^1(\cI_{Z}(2\xi-f),\cO_{X_c})-\h0(\cF)$.
\end{proof}

Finally, we will test $\mu$-stability of rank 2 bundles on $(X_c, h)$ using the following criterion.

\begin{proposition}\label{prop:Hoppe}
	Let $\cE$ be a rank 2 orientable locally free sheaf on $(X_c, h)$. Then $\cE$ is $\mu$-(semi)stable if and only if $\H0(\cE(a\xi + bf)) =0 $ for all $a,b \in \mathbb{Z}$ satisfying $ (9-c) a + 4b \leq ~(<)~ c-15$. 
\end{proposition}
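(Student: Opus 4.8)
The plan is to invoke Hoppe's criterion for $\mu$-stability of bundles on varieties with Picard rank $\geq 2$, adapted to the polarization $h = \xi + f$ on $X_c$. Recall that for a rank $2$ bundle $\cE$ with $\det\cE = \cO_{X_c}(D_0)$, $\mu$-stability with respect to $h$ is equivalent to the nonexistence of a destabilizing sub-line-bundle $\cO_{X_c}(D) \hookrightarrow \cE$ with $D\cdot h^2 \geq \tfrac12 D_0\cdot h^2$ (and $>$ in the stable case); equivalently, twisting by $\cO_{X_c}(-D)$, to the vanishing of $\H0(\cE(-D))$ for all $D$ with $(2D - D_0)\cdot h^2 \geq 0$. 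So the first step is to compute: for an orientable rank $2$ bundle on $(X_c,h)$ we have $c_1(\cE) = 4h + K_{X_c}$. Using $c_1(T_{X_c}) = 2\xi + f$ we get $K_{X_c} = -2\xi - f$, hence $D_0 = 4(\xi+f) - 2\xi - f = 2\xi + 3f$. Writing $D = -a\xi - bf$ (so that $\cE(-D) = \cE(a\xi+bf)$, matching the statement), the destabilizing condition $(2D-D_0)\cdot h^2 \geq 0$ becomes $(-2a\xi - 2bf - 2\xi - 3f)\cdot h^2 \geq 0$, i.e. $-(2a+2)\,\xi h^2 - (2b+3)\,f h^2 \geq 0$.

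The second step is the numerical computation of $\xi\cdot h^2$ and $f\cdot h^2$ on $X_c$. From the relations $\xi^2 = 2\xi f - cf^2$, $f^3 = 0$, $\deg(\xi f^2) = 1$ established in the excerpt, one computes $h^2 = (\xi+f)^2 = \xi^2 + 2\xi f + f^2 = (2\xi f - cf^2) + 2\xi f + f^2 = 4\xi f + (1-c)f^2$, then $\xi\cdot h^2 = 4\xi^2 f + (1-c)\xi f^2 = 4(2\xi f^2 - cf^3) + (1-c)\xi f^2 = 8 + (1-c) = 9 - c$ and $f\cdot h^2 = 4\xi f^2 + (1-c)f^3 = 4$. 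Substituting, the destabilizing inequality $-(2a+2)(9-c) - (2b+3)\cdot 4 \geq 0$ simplifies to $(2a+2)(9-c) + 4(2b+3) \leq 0$, i.e. $2(9-c)a + 8b + 2(9-c) + 12 \leq 0$, i.e. $(9-c)a + 4b \leq -(9-c) - 6 = c - 15$. This is exactly the stated inequality; the strict version corresponds to $\mu$-stability, giving the parenthetical $(<)$.

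The third step is to justify that testing only sub-line-bundles suffices, i.e. that Hoppe's criterion applies in this form. Since $\cE$ has rank $2$, any destabilizing subsheaf can be replaced by its saturation, which is a reflexive rank $1$ sheaf on the smooth threefold $X_c$, hence a line bundle $\cO_{X_c}(D)$; and since $\Pic(X_c) = \bZ\xi \oplus \bZ f$ is torsion-free, $D = a\xi + bf$ for integers $a,b$. The existence of an injection $\cO_{X_c}(D)\hookrightarrow\cE$ is equivalent to $\H0(\cE(-D)) = \H0(\cE(-a\xi - bf))\neq 0$. This direction (nonvanishing forces a destabilizing subsheaf when the slope inequality holds, and conversely) is the content of the generalized Hoppe criterion; I would cite the standard reference (e.g. Hoppe's original paper, or Okonek–Schneider–Spindler, or the formulation in Jardim–Earp or Macrì–Schmidt-type surveys). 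The main obstacle — really the only subtlety — is the bookkeeping of signs and the correct form of the slope inequality: one must be careful that "$D$ destabilizes" means $\mu_h(\cO(D)) \geq \mu_h(\cE) = \tfrac12\mu_h(\det\cE)$, translate this into $(2D - D_0)\cdot h^2 \geq 0$, and then not drop a sign when passing to $\cE(a\xi+bf)$ with $D = -a\xi - bf$. Once the identities $\xi h^2 = 9-c$, $fh^2 = 4$, $D_0 = 2\xi + 3f$ are in hand, the rest is the displayed arithmetic, which reproduces the bound $(9-c)a + 4b \leq (<)\ c-15$ precisely, completing the proof.
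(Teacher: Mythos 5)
Your proposal is correct and takes essentially the same route as the paper, which likewise computes $\mu(\cE)=15-c$ and $(a\xi+bf)\cdot h^2=(9-c)a+4b$ and then invokes the generalized Hoppe criterion \cite[Corollary 4]{JMPS17}; your extra arithmetic ($\xi h^2=9-c$, $fh^2=4$, $D_0=2\xi+3f$) and the saturation argument for reducing to sub-line-bundles are exactly the content hidden in that citation. One caveat: in both of your parenthetical remarks you swap ``stable'' and ``semistable'' --- the weak inequality $\leq$ is the test for $\mu$-stability and the strict $<$ for $\mu$-semistability, which is what your own derivation (nonexistence of a sub-line-bundle with $D\cdot h^2\geq\tfrac12 D_0\cdot h^2$ characterizes stability) actually establishes and is the convention intended in the statement.
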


\begin{proof}
	First, notice that the slope $\mu (\cE)$ of $\cE$ is $\frac{c_1 (\cE) \cdot h^2}{2} = 15-c$ and $(a \xi + bf) \cdot h^2 =  (9-c) a + 4b$. The result follows from \cite[Corollary 4]{JMPS17}.
\end{proof}


\section{Rank 2 $h$-instanton bundles on $X_c$} \label{sec:instantons on Xc}


We start by noticing that if $\cE$ is orientable rank 2 bundle on $(X_c,h)$, then $c_1(\cE)=2 \xi + 3f$. The instanton condition imposes restrictions on $c_2(\cE)$.

\begin{lemma}\label{lem:bounds_2ndChern}
	Let $\cE$ be an orientable rank 2 $h$-instanton bundle on $(X_c,h)$. If 
	$c_2(\cE) = \alpha \xi f + \beta f^2$, then $k(\cE)=3 \alpha + \beta - 21 + c\ge0$ and $\alpha \geq 5$.
\end{lemma}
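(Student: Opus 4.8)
The plan is to use the Grothendieck--Riemann--Roch formula to compute $\chi(\cE(-h))$ in terms of $\alpha$, $\beta$, and $c$, and then invoke the instanton cohomology vanishings recorded after Definition \ref{defn:instanton} together with the ampleness/effectivity restrictions from Proposition \ref{prop:non effective divisors} to pin down the bounds. First I would set $c_1(\cE) = 2\xi + 3f$ (as noted at the start of the section, orientability forces this, since $4h + K_{X_c} = 4(\xi+f) - (2\xi+f) = 2\xi + 3f$) and $c_2(\cE) = \alpha\xi f + \beta f^2$. Then I would apply GRR (or equivalently Hirzebruch--Riemann--Roch on the threefold $X_c$) to $\cE(-h)$, using the Chern classes of $T_{X_c}$ computed in Section \ref{sec:ruled fanos} ($c_1(T_{X_c}) = 2\xi+f$, $c_2(T_{X_c}) = 6\xi f - 3f^2$), and the intersection numbers $\deg(\xi^3) = 4-c$, $\deg(\xi^2 f) = 2$, $\deg(\xi f^2) = 1$, $\deg(f^3) = 0$. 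Carrying out the (routine but somewhat lengthy) computation should yield $\chi(\cE(-h)) = -(3\alpha + \beta - 21 + c)$.

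Next I would invoke the cohomological consequences of the instanton condition: from the displayed vanishings after Definition \ref{defn:instanton} we have $\chi(\cE(-h)) = -\h1(\cE(-h)) = -k(\cE)$, so the GRR computation gives $k(\cE) = 3\alpha + \beta - 21 + c$. Since $k(\cE) = \h1(\cE(-h)) \ge 0$ by definition, this immediately gives the inequality $3\alpha + \beta \ge 21 - c$. To extract the sharper bound $\alpha \ge 5$, I would argue by contradiction: suppose $\alpha \le 4$. I would want to use the effectivity obstructions of Proposition \ref{prop:non effective divisors} applied to divisor classes built from $c_1(\cE)$ and $c_2(\cE)$, restricting $\cE$ to a suitable surface or a curve in the class $\xi f$, and derive a contradiction with the vanishing $\h0(\cE(-h)) = 0$ (item (1) of Definition \ref{defn:instanton}). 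More concretely, I expect one restricts $\cE$ to a general member $M \in \Lambda_M$ (a rational curve of degree $3$, where $\cO_M(1)$ has degree $3$), computes $\deg(\cE|_M) = c_1(\cE) \cdot \xi f = (2\xi+3f)\cdot\xi f = 2\xi^2 f + 3\xi f^2 = 4 + 3 = 7$, and uses the splitting type $\cE|_M \cong \cO_M(a) \oplus \cO_M(b)$ with $a + b = 7$; then the constraint that the vanishing locus of a section of $\cE$ (or the geometry of $\cE^{\vee\vee}$ via Theorem \ref{thm:double_dual_bundle}) forces $c_2$ to be large enough.

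Alternatively --- and this is probably the cleaner route --- I would combine the already-established inequality $3\alpha + \beta \ge 21 - c$ with a lower bound on $\beta$ coming from positivity of intersection numbers: since the instanton bundle $\cE$ restricted to a fiber $f$ of $\pi$ has $c_2$ controlled by $\beta$-type terms, and since $c_2(\cE)$ must be a ``positive'' class (represented by an effective curve, as $\cE$ has a section in high twist whose zero locus has class $c_2(\cE)$), one gets $\beta \le$ (something linear in $\alpha$) from Proposition \ref{prop:non effective divisors} with the roles reversed, hence $\alpha \ge 5$. The main obstacle I anticipate is the second part: the bound $\alpha \ge 5$ does not follow from GRR and $k(\cE) \ge 0$ alone (which only gives $3\alpha+\beta \ge 21-c$, consistent with e.g. $\alpha = 0, \beta = 21$), so one genuinely needs to rule out small $\alpha$ using the finer geometry --- either the non-effectivity of certain twists of $c_1(\cE)$ via Proposition \ref{prop:non effective divisors}, or the $\mu$-(semi)stability criterion of Proposition \ref{prop:Hoppe} applied together with the restriction that $\cE(-h)$ has no sections, forcing the destabilizing sub-line-bundles to have bounded slope and thereby constraining $\alpha$ from below. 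Getting the exact threshold $\alpha = 5$ to drop out cleanly is the delicate point, and I would expect the proof to restrict $\cE$ to a general surface $S \in |\xi + f|$ or to a general fiber and analyze the resulting rank $2$ bundle there.
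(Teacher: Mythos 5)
The first half of your proposal is correct and matches the paper: Riemann--Roch applied to $\cE(-h)$ gives $\chi(\cE(-h))=-(3\alpha+\beta-21+c)$, and since $\chi(\cE(-h))=-\h1(\cE(-h))=-k(\cE)$ the charge formula and $k(\cE)\ge0$ follow. Your observation that $\alpha\ge5$ cannot follow from this alone is also right. But for that second inequality you only offer speculative routes (restriction to curves in $\Lambda_M$ and splitting types, positivity of $c_2$, restriction to a surface in $|\xi+f|$), none of which is carried out, and none of which is the actual argument; as written this is a genuine gap.

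The missing idea is to run the same Euler-characteristic strategy on a \emph{different} twist, namely $\cE(-2\xi-f)$. The Riemann--Roch formula gives $\chi(\cE(-2\xi-f))=5-\alpha$, so it suffices to show that $\h{i}(\cE(-2\xi-f))=0$ for $i=0,2,3$, whence $\chi(\cE(-2\xi-f))=-\h1(\cE(-2\xi-f))\le0$. These vanishings come from the instanton conditions combined with effectivity and orientability: $\h0(\cE(-2\xi-f))=0$ because $\xi$ is effective and $\h0(\cE(-h))=0$; $\h3(\cE(-2\xi-f))=\h0(\cE(-2\xi-3f))=0$ by Serre duality together with $\cE^\vee\simeq\cE(-2\xi-3f)$ and the effectivity of $2\xi+3f$; and $\h2(\cE(-2\xi-f))=0$ follows by tensoring the pulled-back Euler sequence \eqref{ses:general_ses_for_our_variety_1} and its twisted dual \eqref{ses:general_ses_for_our_variety_2} with $\cE(-2\xi-f)$, which reduces the claim to $\h2(\cE(-2h))=0$ (an instanton condition) and $\h3(\cE(-2\xi-3f))=0$. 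Without some argument of this kind --- producing a twist whose Euler characteristic is $5-\alpha$ and whose $\h0$, $\h2$, $\h3$ all vanish --- the threshold $\alpha\ge5$ is not established.
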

\begin{proof}
	Let $\cE$ be a rank 2 vector bundle on $X_c$ with $c_1(\cE) = 2 \xi + 3f$ and $c_2(\cE) = \alpha \xi f + \beta f^2$. A lengthy but straightforward computation with the Riemann--Roch theorem yields
	\begin{align}
		\chi (\cE (\lambda_1 \xi + \lambda_{2} f)) &= \bigg( \frac{4-c}{3} \bigg) \lambda_1^3 + 2 \lambda_{1}^2 \lambda_{2} + \lambda_{1} \lambda_{2}^2 +(12-2c) \lambda_{1}^2 + 12 \lambda_{1} \lambda_{2} +2 \lambda_{2}^2 \nonumber\\
		&+ \bigg( \frac{143-14c}{3} - 2 \alpha- \beta \bigg) \lambda_{1} + \big( 21- \alpha \big) \lambda_{2}  -6 \alpha -2 \beta +68-4c. \label{eq:RR}
	\end{align}
	Setting $\lambda_1=\lambda_2=-1$, we get that 
	$$ \chi (\cE (-h)) = -3\alpha - \beta +21 - c, $$
	leading to the desired formula for the charge of $\cE$.
	
	
	Since $\xi$ is an effective divisor and $\h0 (\cE (-h)) = 0$ by definition, we have the vanishing $\h0 (\cE (-2 \xi - f)) = 0$. Also, 
	$$ \h3(\cE (-2 \xi - f))=\h0 (\cE(-2\xi-3f)) $$
	by Serre duality and the isomorphism $\cE^\vee\simeq\cE(-2\xi-3f)$; the term on the right-hand side vanishes again because $2\xi+3f$ is an effective divisor, so $\h3(\cE (-2 \xi - f))=0$.
	
	If we tensor the sequence \eqref{ses:general_ses_for_our_variety_2} with $\cE(-2 \xi -f)$ and pass to cohomology, we obtain the inequality 
	$$ \h3 \big( \cE(-2 \xi - f) \otimes  \pi^{*} \Omega_{\mathbb{P}^{2}} \big) \leq \h3 (\cE (-2\xi - 3f)), $$
	thus $\h3 \big( \cE(-2 \xi - f) \otimes  \pi^{*} \Omega_{\mathbb{P}^{2}} \big)=0$.
	
	Similarly, if we tensor the sequence \eqref{ses:general_ses_for_our_variety_1} with $\cE(-2 \xi -f)$ and pass to cohomology, we obtain
	$$ \h2 (\cE (-2 \xi - f)) \leq \h2 (\cE (-2 \xi - 2f)) + \h3 \big( \cE(-2 \xi - f) \otimes  \pi^{*} \Omega_{\mathbb{P}^{2}} \big) = 0, $$
	so $\h2 (\cE (-2 \xi - f)) = 0$. Therefore, we have $\chi (\cE (-2 \xi - f)) = - \h1 (\cE (-2 \xi - f))$.  Since $\chi (\cE (-2 \xi - f)) = -\alpha +5 $ by the formula in \eqref{eq:RR}, we obtain the second inequality.
\end{proof}



We are now ready to state the main result of this section, establishing the existence of rank 2 instanton bundles on the varieties $X_c$.

\begin{proposition} \label{thm:existence_of_instanton_by_Serre}
	For each $\alpha \ge 6$ when $c\ge1$ and $\alpha\ge7$ when $c=0$, there exist rank 2, orientable, $\mu$-stable $h$-instanton bundles $\cE$ on $(X_c,h)$ with $c_2(\cE) = \alpha \xi f + 2f^2$.
\end{proposition}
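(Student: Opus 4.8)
The plan is to realize $\cE$ as a twist of the rank~$2$ bundles produced by the Serre construction in Lemma~\ref{lem:serreconstruction}. Put $m:=\alpha-4$; the hypotheses on $\alpha$ say precisely that $m\ge2$ when $c\ge1$ and $m\ge3$ when $c=0$, so Lemma~\ref{lem:serreconstruction} applies and yields a rank~$2$ locally free sheaf $\cF$ with a section vanishing on a \emph{general} disjoint union $Z=\bigsqcup_{i=1}^{m}M_i$ of members of $\Lambda_M$, sitting in the extension~\eqref{eq:bdl-f}. I would then set $\cE:=\cF(2f)$. Twisting \eqref{eq:bdl-f} by $\cO_{X_c}(2f)$ gives
\begin{equation*}
0\lra\cO_{X_c}(2f)\lra\cE\lra\cI_Z(2\xi+f)\lra0,
\end{equation*}
so $c_1(\cE)=2\xi+3f$ (hence $\cE$ is orientable) and $c_2(\cE)=(2f)(2\xi+f)+[Z]=(m+4)\xi f+2f^2=\alpha\xi f+2f^2$. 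It remains to verify that $\cE$ is an $h$-instanton and that it is $\mu$-stable.

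For the instanton property I would apply Lemma~\ref{lem:rk2instanton}: since $\cE$ is orientable, locally free and of rank~$2$, it suffices to show $\h0(\cE(-h))=\h1(\cE(-2h))=0$. Twisting the displayed sequence by $\cO_{X_c}(-h)$ and by $\cO_{X_c}(-2h)$ produces
\begin{equation*}
0\to\cO_{X_c}(f-\xi)\to\cE(-h)\to\cI_Z(\xi)\to0,\qquad 0\to\cO_{X_c}(-2\xi)\to\cE(-2h)\to\cI_Z(-f)\to0.
\end{equation*}
By Lemma~\ref{lem:cohomology_of_line_bundles} one has $\h{i}(\cO_{X_c}(f-\xi))=0$ for all $i$, $\h1(\cO_{X_c}(-2\xi))=\h0(\p2,\cO(-2))=0$ and $\h1(\cO_{X_c}(-f))=\h1(\p2,\cO(-1))=0$; moreover $\cO_{X_c}(-f)$ restricts to $\cO_{\bP^1}(-1)$ on each rational curve $M_i$, so the structure sequence of $Z$ twisted by $\cO_{X_c}(-f)$, together with $\h0(\cO_Z(-f))=0$, gives $\h1(\cI_Z(-f))=0$. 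Hence $\h1(\cE(-2h))=0$ automatically, and $\h0(\cE(-h))=\h0(\cI_Z(\xi))$. Since $\xi^2f=2$, the line bundle $\cO_{X_c}(\xi)$ restricts to $\cO_{\bP^1}(2)$ on each $M_i$, while $\h0(\cO_{X_c}(\xi))=\h0(\p2,F_c)=7-c$; from $0\to\cI_Z(\xi)\to\cO_{X_c}(\xi)\to\cO_Z(\xi)\to0$ with $\cO_Z(\xi)\simeq\bigoplus_i\cO_{\bP^1}(2)$, I would show that for a general $Z$ the restriction map is injective as soon as $3m\ge7-c$ — which is exactly the asserted range of $\alpha$. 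Thus $\h0(\cI_Z(\xi))=0$, and $\cE$ is an $h$-instanton (of charge $3\alpha+c-19$ by Lemma~\ref{lem:bounds_2ndChern}).

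For $\mu$-stability I would invoke Proposition~\ref{prop:Hoppe}: it is enough to prove $\h0(\cE(a\xi+bf))=0$ whenever $(9-c)a+4b\le c-15$. Twisting the displayed sequence by $\cO_{X_c}(a\xi+bf)$ yields $\h0(\cE(a\xi+bf))\le\h0(\cO_{X_c}(a\xi+(b+2)f))+\h0(\cI_Z((a+2)\xi+(b+1)f))$. When $a\ge0$, the hypothesis of Proposition~\ref{prop:non effective divisors} is met, so $a\xi+(b+2)f$ is non-effective for every $c$ (first term $=0$) and $(a+2)\xi+(b+1)f$ is non-effective for $c=2,3,4$ (second term $=0$); for $c=1$ the second term vanishes by the computation $S^{a+2}(F_1)\simeq\cO_{\p2}(a+2)^{\oplus(a+3)}$ together with the inequality $a+b\le-4$, and for $c=0$ only finitely many classes $(a+2)\xi+(b+1)f$ in the allowed range are effective, each cut out by a small linear system against which a general $Z$ is not contained, the corresponding bound on $m$ being already subsumed by $m\ge3$. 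When $a<0$, the first term vanishes by Lemma~\ref{lem:cohomology_of_line_bundles}, and so does the second except in the residual cases $a\in\{-1,-2\}$, where either the inequality forces the coefficient of $f$ so negative that the class is non-effective, or a general $Z$ again makes $\h0(\cI_Z(\cdot))$ vanish by the same restriction-map estimate (using that $\pi$ carries each $M_i$ isomorphically onto a line of $\p2$). Proposition~\ref{prop:Hoppe} then yields $\mu$-stability, completing the argument.

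The technical heart is the general-position input: one has to verify that a general union of $m$ disjoint members of the $5$-dimensional family $\Lambda_M$ imposes the expected number of conditions on $\h0(\cO_{X_c}(\xi))$ — and, when $c=0$, on the finitely many extra linear systems entering the stability estimate — so that the numerical threshold is exactly $m\ge3$ for $c=0$ and $m\ge2$ for $c\ge1$. This can be done by a standard incidence/semicontinuity argument, using that the curves $M\in\Lambda_M$ sweep out $X_c$ and that $\cO_{X_c}(\xi)$ is globally generated. With this in hand, all remaining steps are routine manipulations with Lemma~\ref{lem:cohomology_of_line_bundles}, Proposition~\ref{prop:non effective divisors} and Proposition~\ref{prop:Hoppe}.
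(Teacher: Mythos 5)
Your construction is the same as the paper's: take the bundles $\cF$ from Lemma~\ref{lem:serreconstruction}, set $\cE=\cF(2f)$, verify the instanton conditions via Lemma~\ref{lem:rk2instanton} and Lemma~\ref{lem:cohomology_of_line_bundles}, and prove $\mu$-stability via Proposition~\ref{prop:Hoppe} using Proposition~\ref{prop:non effective divisors} for $a\ge0$, $c\ge2$, and the splitting $S^{a+2}F_1\simeq\cO_{\p2}(a+2)^{\oplus(a+3)}$ for $c=1$; so in outline the proofs coincide. The two places where you deviate are both viable but only sketched. First, for $\h0(\cI_Z(\xi))=0$ you invoke a dimension count ($3m\ge 7-c$, matching $\h0(\cO_{X_c}(\xi))=\h0(F_c)=7-c$ against $3$ conditions per curve) plus a general-position argument; the paper instead argues geometrically (two disjoint generic members of $\Lambda_M$ never lie on a divisor in $|\xi|$ when $c\ge1$, while for $c=0$ the relation $\xi^2=2\xi f$ forces $m\ge3$) — both versions rest on an unproved genericity assertion of comparable strength, so this is an acceptable substitute. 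Second, for $\mu$-stability when $c=0$ and $a\ge0$, the paper runs a cohomological chase (twisting \eqref{ses:general_ses_for_our_variety_3} and \eqref{ses:constructed_bundle_by_Serre} and using the already-established instanton vanishings, case by case for $a=0,1,\dots,5$ and $a\ge6$), whereas you propose to list the finitely many effective classes $(a+2)\xi+(b+1)f$ in the allowed range and argue that a general $Z$ lies on none of them; this does work — every effective divisor in those classes on $X_0$ is of the form $kE+\pi^{-1}(C)$ with $E$ the rigid divisor in $|\xi-2f|$ and $C$ a point-free or linear plane curve, and a general $M_i$ lies neither in $E$ nor in more than one fiber surface — but as written it is an assertion, and the same remark applies to your treatment of the residual cases $a\in\{-1,-2\}$, where the class $\xi-f$ is in fact effective for $c=0,1,2$ and so genuinely requires the general-position (or, as in the paper, the $\h0(\cE(-h))=0$) argument rather than non-effectivity. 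In short: correct strategy, essentially the paper's, with the $c=0$ stability analysis replaced by an effectivity/general-position argument that buys brevity at the cost of several genericity checks the paper avoids by using the instanton vanishings directly.
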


\begin{proof}
	Set $\cE:=\cF(2f)$, where $\cF$ is one of the locally free sheaves constructed in Lemma \ref{lem:serreconstruction}. Note that $c_1(\cE) = c_1(\cF) + 4f = 2\xi + 3f$ so $\cE$ is orientable; moreover, 
	$$ c_2 (\cE)= c_2(\cF) + c_1(\cF) \cdot 2f + 4f^2 = (m+4) \xi f + 2f^2. $$ 
	To simplify notation, we set $\alpha=m+4$.
	
	We argue that $\cE$ is an $h$-instanton bundle when $m\ge2$ (equivalently $\alpha\ge6$) if $c\ge1$, and when $m\ge3$ (equivalently $\alpha\ge7$) if $c=0$.
	
	By Lemma \ref{lem:rk2instanton}, it is enough to check that $\h0(\cE(-h)) =\h1(\cE(-2h))=0$. Twisting the exact sequence in display \eqref{eq:bdl-f} by $\cO_{X_c}(2f)$ we obtain
	\begin{equation}\label{ses:constructed_bundle_by_Serre}
		0\longrightarrow\cO_{X_c}(2f)\longrightarrow \cE \longrightarrow \cI_{Z}(2\xi + f) \longrightarrow 0.
	\end{equation}
	Since $\h{i}(\cO_{X_c}(-\xi+f))=0$ by Lemma \ref{lem:cohomology_of_line_bundles}, the cohomology of exact sequence \eqref{ses:constructed_bundle_by_Serre} tensored by $\cO_{X_c}(-h)$ yields $\h0(\cE(-h))=\h0(\cI_{Z}(\xi))$. When $c\ge1$, then $\h0(\cI_{Z}(\xi))=0$ for $m\ge2$, since the disjoint union of two generic curves of class $|\xi f|$ cannot be contained in a single divisor of class $|\xi|$. However, this argument fails when $c=0$ because $\xi^2=2\xi f$, which implies that the union of two curves of class $|\xi f|$ does lie in a divisor of class $|\xi|$, therefore $\h0(\cI_{Z}(\xi))\ne0$ when $m=2$ (equivalently, $\alpha=6$). When $c=0$, we need $Z$ to have at least 3 disjoint curves (so that $\alpha\ge7$) to make sure that $\h0(\cI_{Z}(\xi))=0$.
	
	Lemma \ref{lem:cohomology_of_line_bundles} also yields $\h{i}(\cO_{X_c}(-2\xi ))=0$ for all $i$, so $\h1(\cE(-2h))=\h1(\cI_{Z}(-f)).$
	The cohomology of the following short exact sequence 
	\begin{equation}\label{ses:ideal_sheaf}
		0 \lra \cI_{Z}(-f)  \lra \cO_{X_c}(-f) \lra \cO_{Z}(-f) \lra 0
	\end{equation}
	gives us the equality $\h1 (\cI_{Z} (-f)) = \h0 (\cO_{Z} (-f))$ since $\h{i}(\cO_{X_c}(-f))=0$ by Lemma \ref{lem:cohomology_of_line_bundles}. Since $\mathrm{deg} (\xi f \cdot (-f)) = -1$, we get that $\h0 (\cO_{Z} (-f)) = 0$.
	
	Next, let us prove that $\cE$ is $\mu$-stable; by Proposition \ref{prop:Hoppe}, it is enough to show that $\h0(\cE (a\xi + bf)) = 0$ for all $a,b\in\mathbb{Z}$ satisfying
	\begin{equation} \label{eq:ineq}
		(9-c) a + 4b \leq c-15.
	\end{equation}
	
	If we tensor sequence in display \eqref{ses:constructed_bundle_by_Serre} by $\cO_{X_c}(a\xi + bf)$ then the induced cohomology sequence gives us
	\begin{equation}\label{ineq:existence_theorem_1}
		\h0(\cE(a\xi + bf)) \leq \h0 (\cO_{X_c} (a\xi + (b+2)f)) + \h0 ( \cI_{Z}((a+2)\xi + (b+1)f));
	\end{equation}
	we will now check that both summands vanish when $a$ and $b$ satisfy the inequality in display \eqref{eq:ineq}.
	
	Note that $\h0 (\cO_X(a\xi + (b+2)f)) = 0$ if $a < 0$ by Lemma \ref{lem:cohomology_of_line_bundles}. If $a\geq0$, then Proposition \ref{prop:non effective divisors} guarantees that $a\xi + (b+2)f$ is not effective; therefore, we conclude that $\h0(\cO_X (a\xi + (b+2)f))=0$ for every $a,b$ satisfying $(9-c)a+4b\leq c-15$. 
	
	The cohomology of the sequence in display \eqref{ses:ideal_sheaf} after tensoring by $\cO_X ((a+2)\xi + (b+1)f)$ gives us the inequality
	\begin{equation}\label{ineq:existence_theorem_2}
		\h0 ( \cI_{Z}((a+2)\xi + (b+1)f)) \leq \h0 ( \cO_X ((a+2)\xi + (b+1)f)).
	\end{equation}
	We now need to consider various cases:
	\begin{itemize}
		\item If $a = -2$ then the inequality in display \eqref{eq:ineq} implies that $ b+1 \leq 1$. Since $m\ge2$, $Z$ will not lie in an element of the class $|f|$; that is, $\h0(\cI_{Z}(f))=0$. Since  $f$ is effective, we get that  $\h0 (\cI_{Z} ((b+1)f)) = 0$ for $b+1\leq1$. \\
		
		\item If $a =-1$ then $ b \leq -2$. Since $\h0(\cE(-\xi-f))=0$ by Proposition \ref{lem:cohomology_of_line_bundles} and $f$ is effective, then $\h0(\cE(-\xi+bf)) =0$ for $b\leq-2$. \\
		
		\item  If $a\geq0$ then $\cO_X((a+2)\xi + (b+1)f)$ is not effective for $c=2,3,4$ by Proposition \ref{prop:non effective divisors}. So, $\h0(\cO_X ((a+2)\xi + (b+1)f))=0$.  \\
	\end{itemize}
	
	This completes the proof of $\mu$-stability for $\cE$ when $c=2,3,4$. 
	
	Assume now that $c=1$; the inequality in display \eqref{eq:ineq} simplifies to $8a+4b\le-14$. When $a\geq0$, we have that $a+b\leq\frac{-14-4a}{4} \leq -4$. By Lemma \ref{lem:cohomology_of_line_bundles}, we have 
	\begin{align*}
		\h0 (\cO_{X_1}((a+2)\xi + (b+1)f) &= \h0 (\p2 , S^{a+2}F_1\otimes \cO_{\p2}(b+1)) \\
		& = \h0 (\p2, \cO_{\p2}^{a+3}(a+2) \otimes \cO_{\p2}(b+1)) \\
		& = \h0 (\p2, \cO_{\p2}^{a+3}(a+b+3)) =0.
	\end{align*}
	Therefore, $\cE$ is $\mu$-stable when $c=1$ and $m\ge2$.
	
	Finally, set $c=0$; the inequality in display \eqref{eq:ineq} simplifies to $9a+4b\le-15$ so that $2a+b\le-(15+a)/4$. Again, we must consider three separate cases.
	\begin{itemize}
		\item If $a=0$ then $b\leq-4$. Then, if we tensor the sequence in display \eqref{ses:general_ses_for_our_variety_3} by $\cE(-\xi -2f)$, we have
		$$\h0(\cE(-4f)) \leq \h0(\cE(-\xi-4f))+\h0(\cE(-\xi-2f))+\h1(\cE(-2\xi-2f)).$$
		Since $\cE$ is an instanton, all terms on the right-hand side vanish. Therefore, we have $\h0(X_0, \cE(-4f))=0$, hence $\h0(X_0, \cE(bf))=0$ for $b \leq -4$.\\
		
		\item If $a=1$, then $b\leq-6$. Then, if we tensor the sequence in display \eqref{ses:general_ses_for_our_variety_3} by $\cE(-6f)$, we have $$ \h0(\cE(-\xi-6f)) \leq \h0(\cE(-4f)) + \h0(\cE(-6f)) + \h1(\cE(-\xi-4f)). $$
		Note that $\h0(\cE(-4f))$ and $ \h0(\cE(-6f))$ vanish by the previous item. Also, if we tensor the sequence is display \eqref{ses:constructed_bundle_by_Serre} by $\cO_{X_0}(-\xi-4f)$, we have 
		$$ \h1(\cE(-\xi-4f)) \leq \h1(\cO_{X_0}(-\xi-2f)) + \h1\cI_{Z}(\xi-3f)). $$
		Since $ \h1(\cO_{X_0}(-\xi-2f)) = 0$ by Lemma \ref{lem:cohomology_of_line_bundles}, we have 
		$$ \h1(\cE(-\xi-4f)) \leq \h1(\cI_{Z}(\xi-3f)) \leq \h1(\cO_{X_0}(\xi-3f)) + \h0(\cO_{Z}(\xi-3f)). $$
		Since $(\xi-3f)\cdot \xi f = -1$, $\h0(X_0, \cO_{Z}(\xi-3f)) = 0$. Moreover, $\h1(\cO_{X_0}(\xi-3f))=0$ again by Lemma \ref{lem:cohomology_of_line_bundles}. So, $\h0(\cE(-\xi-6f)) = 0$. Hence, $\h0(\cE(-\xi+bf)) = 0 $ for $b \leq -6$.\\
		
		\item If $2 \leq a \leq 5$, one can use the same arguments as in the case $a=1$. \\
		
		\item If $a\geq6$ then $2a+b\leq -6$. Then we have
		\begin{align*}
			\h0 (X_0, \cE(a\xi + bf)) &\leq \h0 ( \cO_X ((a+2)\xi + (b+1)f))\\
			&\le \h0 (\p2, S^{a+2}F_0 \otimes \cO_{\p2} (b+1)) \\
			&\le \h0 (\p2, \oplus_{i=0}^{a+2} \cO_{\p2} (2i +b+1)) = 0
		\end{align*}
		because $2(a+2)+b+1 = 2a +b +5 \leq -1$. 
	\end{itemize}
	Therefore $\cE$ is $\mu$-stable when $c=0$ and $m\ge3$.
\end{proof}

The next step is to compute the dimension of $\Ext^1(\cE,\cE)$. First, we need the following technical result.

\begin{lemma}\label{lem:dim_extEE}
	Let $\cE$ be a rank 2 $\mu$-stable locally free sheaf on a nonsingular Fano threefold $X$ with Chern classes $c_1(\cE)$ and $c_2(\cE)$. Then 
	$$ \h1(\cE \otimes \cEv)-\h2(\cE \otimes \cEv) = \frac{1}{2}\big(c_1(\cE)^2 - 4c_2(\cE)\big)K_X-3. $$
\end{lemma}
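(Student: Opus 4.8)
The plan is to reduce the identity to the Hirzebruch--Riemann--Roch theorem after disposing of the two extreme cohomology groups of $\cE\otimes\cEv$. Since $\cE$ is $\mu$-stable it is simple, so $\h0(\cE\otimes\cEv)=\dim\Hom(\cE,\cE)=1$. By Serre duality, $\h3(\cE\otimes\cEv)=\dim\Hom(\cE,\cE\otimes\omega_X)$, and I would argue that this vanishes: tensoring by the line bundle $\omega_X$ preserves $\mu$-semistability, while $\mu(\cE\otimes\omega_X)=\mu(\cE)+K_X\cdot h^2<\mu(\cE)$ because $-K_X$ is ample on the Fano threefold $X$; since a nonzero homomorphism between $\mu$-semistable sheaves forces the slope of the source to be at most that of the target (the image is simultaneously a quotient of the first and a subsheaf of the second), we get $\Hom(\cE,\cE\otimes\omega_X)=0$. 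Consequently
\[
\h1(\cE\otimes\cEv)-\h2(\cE\otimes\cEv)=\h0(\cE\otimes\cEv)-\h3(\cE\otimes\cEv)-\chi(\cE\otimes\cEv)=1-\chi(\cE\otimes\cEv),
\]
so it remains only to evaluate the Euler characteristic.

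For that I would first compute the Chern character. As $\cE$ has rank $2$, one has $c_i(\cE)=0$ for $i\ge3$, so, writing $c_i:=c_i(\cE)$,
\[
\operatorname{ch}(\cE)=2+c_1+\tfrac12\big(c_1^2-2c_2\big)+\tfrac16\big(c_1^3-3c_1c_2\big),
\]
and $\operatorname{ch}(\cEv)$ is obtained by reversing the sign of the odd-degree components. Multiplying the two, the degree-$1$ and degree-$3$ parts cancel and one is left with the even class $\operatorname{ch}(\cE\otimes\cEv)=4+\big(c_1(\cE)^2-4c_2(\cE)\big)$. Now apply Hirzebruch--Riemann--Roch, $\chi(\cE\otimes\cEv)=\int_X\operatorname{ch}(\cE\otimes\cEv)\operatorname{td}(X)$, with $\operatorname{td}(X)=1-\tfrac12K_X+\tfrac1{12}\big(K_X^2+c_2(X)\big)-\tfrac1{24}K_X\cdot c_2(X)$ since $c_1(X)=-K_X$. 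Only the degree-$3$ part contributes to the integral, giving
\[
\chi(\cE\otimes\cEv)=-\tfrac{1}{6}K_X\cdot c_2(X)-\tfrac12\big(c_1(\cE)^2-4c_2(\cE)\big)K_X.
\]
Finally, Kodaira vanishing on the Fano threefold $X$ gives $\chi(\cO_X)=1$, while Hirzebruch--Riemann--Roch for $\cO_X$ reads $\chi(\cO_X)=-\tfrac1{24}K_X\cdot c_2(X)$; hence $K_X\cdot c_2(X)=-24$ and the first term above equals $4$. Substituting back, $\chi(\cE\otimes\cEv)=4-\tfrac12\big(c_1(\cE)^2-4c_2(\cE)\big)K_X$, and therefore
\[
\h1(\cE\otimes\cEv)-\h2(\cE\otimes\cEv)=1-\chi(\cE\otimes\cEv)=\tfrac12\big(c_1(\cE)^2-4c_2(\cE)\big)K_X-3,
\]
as claimed.

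The Chern-character bookkeeping and the Todd-class expansion are entirely mechanical; the only step requiring genuine input is the vanishing $\h3(\cE\otimes\cEv)=0$, which is precisely where $\mu$-stability and the Fano hypothesis enter, so that is the point I would treat with the most care.
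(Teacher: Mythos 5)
Your proposal is correct and follows essentially the same route as the paper: simplicity from $\mu$-stability gives $\h0(\cE\otimes\cEv)=1$, Serre duality plus the slope argument (which the paper outsources to a cited lemma) gives $\h3(\cE\otimes\cEv)=0$, and the rest is the Riemann--Roch computation with $c_1(\cE\otimes\cEv)=c_3(\cE\otimes\cEv)=0$ and $c_2(\cE\otimes\cEv)=4c_2(\cE)-c_1(\cE)^2$. Your explicit Chern-character and Todd-class bookkeeping, including $K_X\cdot c_2(X)=-24$, just fills in details the paper leaves implicit.
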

\begin{proof}
	$\cE$ is $\mu$-stable, thus it is simple by \cite[Corollary 1.2.8]{HL10}, so $\h0(\cE \otimes \cEv)=1$. Serre duality tells us that 
	$\h3(\cE\otimes\cEv)=\h0(\cE\otimes\cEv\otimes K_X)=0$, with the last equality coming from the fact that $-K_X$ is ample, see \cite[Lemma 2.2]{ACG22}.
	
	Next, notice that $c_k(\cE \otimes \cEv)=0$ for $k = 1,3$ and $c_2(\cE \otimes \cEv) = 4c_2(\cE)-c_1(\cE)^2$. Grothendieck--Riemann--Roch theorem then yields
	$$ \chi(\cE \otimes \cEv) = 1-\h1(\cE \otimes \cEv) + \h2(\cE \otimes \cEv) = \frac{(-K_X)(c_1^2 - 4c_2)}{2} +4 $$
	and the claim follows.
\end{proof}

\begin{proposition} \label{prop:dim.mod}
	Let $\cE$ be one of the $h$-instanton bundles on $(X_c,h)$ constructed in Proposition \ref{thm:existence_of_instanton_by_Serre}. Then $\dim \Ext^1(\cE,\cE) = 10\alpha+4c-54$ and $\Ext^2 (\cE, \cE) = 0$.
\end{proposition}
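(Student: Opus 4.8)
The plan is to pin down $\dim\Ext^1(\cE,\cE)$ by combining the numerical identity of Lemma \ref{lem:dim_extEE} with a vanishing statement for $\Ext^2(\cE,\cE)$. Since $\cE$ is $\mu$-stable, orientable and locally free of rank $2$, Lemma \ref{lem:dim_extEE} gives $\h1(\cE\otimes\cEv)-\h2(\cE\otimes\cEv)=\tfrac12\big(c_1(\cE)^2-4c_2(\cE)\big)K_{X_c}-3$. With $c_1(\cE)=2\xi+3f$, $c_2(\cE)=\alpha\xi f+2f^2$ and $K_{X_c}=-(2\xi+f)$, a short computation in $A(X_c)$ using $\xi^2=2\xi f-cf^2$, $\deg(\xi^2f)=2$, $\deg(\xi f^2)=1$ and $f^3=0$ yields $\big(c_1(\cE)^2-4c_2(\cE)\big)\cdot K_{X_c}=20\alpha+8c-102$, hence
\[ \h1(\cE\otimes\cEv)-\h2(\cE\otimes\cEv)=10\alpha+4c-54. \]
So the statement reduces entirely to proving $\Ext^2(\cE,\cE)=\h2(\cE\otimes\cEv)=0$.

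To obtain this vanishing I would use a chain of exact sequences. Recall $\cEv\simeq\cE(-2\xi-3f)$, so tensoring the defining sequence \eqref{ses:constructed_bundle_by_Serre} by the locally free sheaf $\cEv$ and using $\cEv(2f)\simeq\cE(-2\xi-f)$, $\cEv(2\xi+f)\simeq\cE(-2f)$ gives
\[ 0\longrightarrow\cE(-2\xi-f)\longrightarrow\cE\otimes\cEv\longrightarrow\cE(-2f)\otimes\cI_Z\longrightarrow0. \]
We already know $\h2(\cE(-2\xi-f))=0$ from the proof of Lemma \ref{lem:bounds_2ndChern}, so it suffices to show $\h2(\cE(-2f)\otimes\cI_Z)=0$. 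Tensoring $0\to\cI_Z\to\cO_{X_c}\to\cO_Z\to0$ by $\cE(-2f)$ reduces this to $\h2(\cE(-2f))=0$ and $\h1(\cE(-2f)|_Z)=0$. Since $\cE=\cF(2f)$ we have $\cE(-2f)=\cF$: the first vanishing follows from \eqref{eq:bdl-f}, where $\h2(\cO_{X_c})=0$ by Lemma \ref{lem:cohomology_of_line_bundles}, and $\h2(\cI_Z(2\xi-f))=0$ because in $0\to\cI_Z(2\xi-f)\to\cO_{X_c}(2\xi-f)\to\cO_Z(2\xi-f)\to0$ we have $\h2(\cO_{X_c}(2\xi-f))=\h2(\p2,S^2F_c\otimes\cO_{\p2}(-1))=0$ (Serre duality on $\p2$ and global generation of $S^2F_c$) and $\h1(\cO_Z(2\xi-f))=0$ (the $M_i$ being disjoint smooth rational curves on each of which $2\xi-f$ has degree $3$). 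For the second vanishing, $Z$ is the clean zero locus of a section of the rank $2$ bundle $\cF$, so $\cF|_Z\simeq\cN_{Z|X_c}\simeq\bigoplus_i\big(\cO_{M_i}(1)\oplus\cO_{M_i}(2)\big)$ by the description of the normal bundles of the complete intersection curves in Section \ref{sec:ruled fanos}; hence $\h1(\cE(-2f)|_Z)=\h1(\cF|_Z)=0$.

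Chaining these vanishings through the two displayed sequences gives $\h2(\cE\otimes\cEv)=0$, i.e. $\Ext^2(\cE,\cE)=0$, and then the numerical identity forces $\dim\Ext^1(\cE,\cE)=10\alpha+4c-54$. I expect the only mildly delicate points to be the identification $\cF|_Z\cong\cN_{Z|X_c}$, which comes from the Koszul resolution of $\cO_Z$ attached to the section cutting out $Z$, and keeping careful track of the various twists; everything else is a routine diagram chase fed by Lemma \ref{lem:cohomology_of_line_bundles}.
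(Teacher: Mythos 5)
Your proposal is correct and follows essentially the same route as the paper's proof: reduce to $\h2(\cE\otimes\cEv)=0$ via Lemma \ref{lem:dim_extEE}, then chase the same chain of sequences (\eqref{ses:constructed_bundle_by_Serre} tensored by $\cEv$, then \eqref{ses:ideal_sheaf} tensored by $\cE(-2f)=\cF$), ending with $\h2(\p2,S^2F_c(-1))=0$ and $\cF|_Z\simeq\cN_{Z|X_c}\simeq\bigoplus_i(\cO_{M_i}(1)\oplus\cO_{M_i}(2))$. Your sign for the right-hand side of Lemma \ref{lem:dim_extEE}, namely $+(10\alpha+4c-54)$, is the correct one.
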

\begin{proof}
	Using $c_1(\cE)=2\xi+3f$ and $c_2(\cE)=\alpha\xi f + 2f^2$, the right hand side of the equality in Lemma \ref{lem:dim_extEE} gives $-(10\alpha+4c-54)$. Therefore, it is enough to show that $\h2(\cE\otimes\cEv)=0$.
	
	To do that, start by considering the cohomology of the sequence \eqref{ses:constructed_bundle_by_Serre} after tensoring by $\cEv\simeq \cE(-2\xi-3f)$; we get the inequality
	\begin{equation}\label{ineq:existence_theorem_3}
		\h2 (\cE \otimes \cEv ) \leq \h2(\cE (-2\xi -f)) + \h2 (\cE\otimes \cI_{Z}(-2f)).
	\end{equation}
	We already showed that $\h2(\cE(-2\xi-f))=0$ in the proof of Lemma \ref{lem:bounds_2ndChern}. Regarding the second summand on of inequality \eqref{ineq:existence_theorem_3}, tensor sequence \eqref{ses:ideal_sheaf} with $\cE (-2f)$ to obtain the following sequence in cohomology
	\begin{equation}\label{ineq:existence_theorem_4}
		\h2 (\cE \otimes \cI_{Z}(-2f)) \leq \h2(\cE (-2f)) + \h1(\cE(-2f)\otimes\cO_Z).
	\end{equation}
	First, notice that $ \h2 (\cE (-2f)) \leq \h2(\cO_{X_c}) + \h2(\cI_{Z \vert X_c}(2\xi -f))$ thanks to sequence in display \eqref{ses:constructed_bundle_by_Serre}. We have $\h2(\cO_{X_c})=0$ and $\h2(\cI_{Z}(2\xi-f)) \leq \h2 (\cO_{X_c} (2\xi -f)) + \h1 (\cO_Z(2\xi-f))$ thanks to sequence \eqref{ses:ideal_sheaf}.
	
	Then $\h2(\cE (-2f))\leq \h2(\cO_{X_c} (2\xi -f)) + \h1 (\cO_Z(2\xi-f))$. We have $ \h2 (\cO_{X_c}(2\xi -f)) = \h2(\p2, S^2 F_c(-1))$ by Lemma \ref{lem:cohomology_of_line_bundles} and it vanishes for all $c$. Also, $\h1(\cO_Z(2\xi-f))=0$ because $Z$ is a disjoint union of $(\alpha-4)$ disjoint curves of class $\xi f$ and $\operatorname{deg}(\xi f\cdot (2\xi -f) ) = 3$. So, $\h2 (\cE (-2f)) = 0$.
	
	Finally, let us consider $\h1(\cE(-2f) \otimes\cO_Z)$. Notice that $\cE(-2f) = \cF$ and the construction of $\cF$ gives the isomorphism $\cF \otimes \cO_Z \simeq \cN_{Z\vert X_c}$. Since $Z$ is a disjoint union of curves $M_i$ of class $\xi f$ and $\cN_{M_i\vert X_c}\simeq\cO_{M_i}(1) \oplus\cO_{M_i}(2)$ as observed right below the exact sequence in display \eqref{ses: resolution of elliptic}, we conclude that 
	$$ \h1 (\cE(-2f) \otimes \cO_Z) = \bigoplus_{i=1}^m\h1(\cO_{M_i}(1)\oplus\cO_{M_i}(2)) = 0. $$ Therefore, $\h2 (\cE \otimes \cI_{Z \vert X_c}(-2f)) =0$ by the inequality in display \eqref{ineq:existence_theorem_4}, and $\h2 (\cE \otimes \cEv ) = 0$ by the inequality in display \eqref{ineq:existence_theorem_3}.
\end{proof}

\begin{remark}
	Comparing the dimension of the family of instanton bundles we just constructed, given by the formula in display \eqref{eq:dim.family}, with the dimension of the irreducible component of the moduli space $\mathcal{M}(2,2\xi+3f,\alpha\xi f+2f^2)$ containing these instanton bundles, obtained in Proposition \ref{prop:dim.mod}, we obtain
	$$ 10\alpha + 4c - 54 - \big(6\alpha-25\big) = 4\alpha + 4c -27; $$
	Note that this is positive when $c\ge1$ and $\alpha\ge6$ and when $c=0$ and $\alpha\ge9$ (which are the conditions under which the dimension formula in display \eqref{eq:dim.family} is valid).
	
	Therefore, we conclude that the family of instanton bundles here constructed is never an open subset of $\mathcal{M}(2,2\xi+3f,\alpha\xi f+2f^2)$.
\end{remark}


\section{Deformation of rank 2 $h$-instanton sheaves on $X_c$} \label{sec:deformation}

In this section, we present the construction of rank 2, orientable, $\mu$-stable instanton bundles on $(X_c,h)$ with $c_2 = a \xi f + bf^2$ for $b>2$, completing the proof of Main Theorem \ref{mthm}.  We will use the elementary transformation of instanton bundles discussed in Proposition \ref{thm:existence_of_instanton_by_Serre} and apply induction on $\beta$. Let us start with a couple of propositions and a remark.

Let $L$ denote a curve in the class of $f^2$ in $A^2(X_c)$. Then $L$ is the pull-back of a point via the map $\pi:X_c\to\mathbb{P}^2$, which is a rational curve. In fact, since $f^2(\xi + f) = 1$, $L$ is a line on $X_c$. If we denote by $\Lambda_L$ the Hilbert scheme of curves in $X_c$ whose class is $f^2$, then $\Lambda_L \simeq \p2$ and distinct elements of $\Lambda_L$ do not intersect each other. Moreover, the structure sheaf $\cO_L$ has the following resolution:
\begin{equation}\label{ses: resolution of line}
	0 \rightarrow \cO_{X_c} (-2f) \rightarrow \cO_{X_c}(-f) \oplus \cO_{X_c}(-f) \rightarrow \cO_{X_c} \rightarrow \cO_L \rightarrow 0.
\end{equation}
In particular, we have $\mathcal{N}_{L|X} \simeq \cO_L^{\oplus2}$.

\begin{proposition}\label{prop: splitting type of serre constructed}
	Let $\cE$ be a bundle constructed in Theorem \ref{thm:existence_of_instanton_by_Serre}. Then the restriction of $\cE$ to a generic line $L \in |f^2|$ splits as $\cO_{L}(1)^{\oplus2} $.
\end{proposition}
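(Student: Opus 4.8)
The plan is to reduce the statement to exhibiting a single line along which $\cE$ is balanced, and then to produce such a line explicitly from the geometry of the vanishing locus of the defining section of $\cE$.

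Recall from Lemma~\ref{lem:serreconstruction} and the proof of Proposition~\ref{thm:existence_of_instanton_by_Serre} that $\cE\cong\cF(2f)$, where $\cF$ fits in an exact sequence
\[ 0\longrightarrow \cO_{X_c}\stackrel{s}{\longrightarrow}\cF\longrightarrow\cI_Z(2\xi-f)\longrightarrow 0 \]
with $s$ vanishing exactly along $Z=\bigsqcup_{i=1}^{m}M_i$, the $M_i\in\Lambda_M$ being pairwise disjoint. Every $L\in|f^2|$ is a fibre of $\pi$, so $\cO_{X_c}(f)|_L\cong\cO_L$ (because $f\cdot f^2=f^3=0$) and hence $\cE|_L\cong\cF|_L$, a rank $2$ bundle of degree $(2\xi+3f)\cdot f^2=2$ on $L\cong\p1$; thus $\cE|_L\cong\cO_L(a)\oplus\cO_L(2-a)$ for a unique $a\ge1$, and the splitting equals $\cO_L(1)^{\oplus2}$ exactly when $a=1$, i.e.\ exactly when $\h0(\cE(-2\xi)|_L)=0$. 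First I would note that the subset $U\subset\Lambda_L\simeq\p2$ of lines $L$ with $\h0(\cE(-2\xi)|_L)=0$ is open, by upper-semicontinuity of cohomology along the fibres of $\pi$; since $\Lambda_L$ is irreducible, it is enough to exhibit one element of $U$.

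To find one, I would use that $M_j$ is a complete intersection of a divisor in $|\xi|$ with a divisor $\pi^{-1}(\ell_j)\in|f|$, $\ell_j\subset\p2$ a line, and that $\pi$ maps $M_j$ isomorphically onto $\ell_j$ (it is not contracted since $\cO_{X_c}(\xi)|_{M_j}$ has degree $\xi^2\cdot f=2$). Disjointness of the $M_j$ forces the lines $\ell_j$ to be pairwise distinct, as two curves of class $\cO_{X_c}(\xi)|_S$ on the surface $S=\pi^{-1}(\ell)$ meet in $\xi^2\cdot f=2$ points. Now fix a general point $x\in M_1$ and set $L_0:=\pi^{-1}(\pi(x))\in\Lambda_L$. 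Then $L_0$ meets $M_1$ transversally at the single point $x$ (a fibre crossing a section), while $L_0\cap M_j=\varnothing$ for $j\ge2$ because the general point $\pi(x)\in\ell_1$ avoids the finite set $\bigcup_{j\ge2}(\ell_1\cap\ell_j)$. Hence the zero scheme of $s|_{L_0}\in\h0(\cF|_{L_0})$, which is the scheme-theoretic intersection of $Z$ with $L_0$ computed inside $L_0$, is the reduced point $x$. Consequently $s|_{L_0}$ saturates to a subbundle of $\cF|_{L_0}$ of degree $1$, that is $\cO_{L_0}(1)\hookrightarrow\cF|_{L_0}$, with invertible quotient of degree $2-1=1$, and the extension
\[ 0\longrightarrow\cO_{L_0}(1)\longrightarrow\cF|_{L_0}\longrightarrow\cO_{L_0}(1)\longrightarrow0 \]
splits since $\Ext^1_{L_0}(\cO_{L_0}(1),\cO_{L_0}(1))=\h1(\cO_{L_0})=0$. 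Therefore $\cE|_{L_0}\cong\cF|_{L_0}\cong\cO_{L_0}(1)^{\oplus2}$, so $L_0\in U$; by the openness of $U$ in the irreducible variety $\Lambda_L$, the generic line in $|f^2|$ lies in $U$, which is the assertion.

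The step I expect to demand the most care is the transversality bookkeeping in the previous paragraph: one must check carefully that, because $T_xL_0$ and $T_xM_1$ meet only in $0$, the ideal of $Z\cap L_0$ in $\cO_{L_0,x}$ is the whole maximal ideal, so that $Z\cap L_0$ is a reduced point and $s|_{L_0}$ vanishes there to order exactly one — this is what legitimises regarding $s|_{L_0}$ as a nowhere-vanishing section of $\cF|_{L_0}(-x)$, hence the degree count in the extension above. A secondary point to verify is that a general choice of pairwise disjoint $M_i$ in Lemma~\ref{lem:serreconstruction} yields pairwise distinct images $\ell_i$ and a general $x\in M_1$ at which $\pi$ is étale; both are routine consequences of the dimension counts already established.
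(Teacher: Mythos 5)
Your argument is correct, but it follows a genuinely different route from the paper's. The paper restricts the defining sequence \eqref{ses:constructed_bundle_by_Serre} to a generic line $L$ chosen \emph{disjoint} from $Z$, getting $0\to\cO_L\to\cE|_L\to\cO_L(2)\to0$, and then rules out the unbalanced splitting types by examining how $\cO_L$ can sit inside them: summands of negative degree are excluded because the quotient would acquire torsion, and the type $\cO_L\oplus\cO_L(2)$ is dismissed because it would make the restricted extension split (so the paper's case analysis implicitly uses that this restricted extension class is nonzero for generic $L$). You instead specialize to a line $L_0$ meeting $Z$ transversally in a single reduced point, where the Serre section $s\in\h0(\cF)$ of Lemma \ref{lem:serreconstruction} restricts to a section of $\cF|_{L_0}$ with one simple zero; its saturation exhibits $\cO_{L_0}(1)$ as a subbundle with invertible degree-one quotient, hence $\cE|_{L_0}\cong\cF|_{L_0}\cong\cO_{L_0}(1)^{\oplus2}$, and semicontinuity over $\Lambda_L\simeq\p2$ (irreducible) spreads the balanced splitting to the generic line. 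What your route buys is precisely that delicate point: you never need to know that the restricted extension is non-split, at the price of the transversality bookkeeping, which you handle correctly ($T_xM_1\cap T_xL_0=0$ forces $Z\cap L_0$ to be the reduced point $x$, so $s|_{L_0}$ has a simple zero). One cosmetic slip: the reason $\pi$ maps $M_1$ isomorphically onto the line $\ell_1$ is that $M_1\cdot f=\xi f\cdot f=1$, making $M_1$ a section of the ruled surface $\pi^{-1}(\ell_1)\to\ell_1$; the number $\xi^2 f=2$ you quote is the degree of $\cO_{X_c}(\xi)$ on $M_1$ and is not what rules out contraction by $\pi$, though this does not affect the proof.
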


\begin{proof}
	We know that $\cE$ fits into the short exact sequence in display \eqref{ses:constructed_bundle_by_Serre}. If we restrict $\cE$ to a generic line $L$ that can be chosen as not intersecting $Z$, then we will have 
	\begin{equation}
		0 \longrightarrow \cO_{L} \longrightarrow \cE \otimes \cO_{L} \longrightarrow \cO_{L}(2) \longrightarrow 0.
	\end{equation}
	We know that $\cE|_L$ must split, so let $\cE \otimes \cO_{L}= \cO_{L}(k) \oplus \cO_{L}(2-k)$ for some $k \le 2$. If $k<0$, the injective map $\cO_{L} \rightarrow \cE \otimes \cO_{L}$ must go into the 2nd summand, so its cokernel would be $ \cO_{L}(k) \oplus T$ where $T$ is a torsion sheaf, rather than $ \cO_{L}(2)$. If $k = 0 \text{ or } 2$, we obtain the trivial extension. So the only possibility left is $k=1$.
\end{proof}

The next step is introducing a family of non-locally free instanton sheaves which is the key to the induction step.

\begin{proposition}\label{prop: kernel instanton sheaf}
	Let $\cE$ be a rank 2, orientable, $\mu$-stable instanton bundle on $(X_c,h)$ with $c_2 = a \xi f + bf^2$ and $\mathrm{Ext}^2(\cE , \cE) = 0$, and let $L\subset X_c$ be a curve of class $|f^2|$ such that $\cE \otimes \cO_{L} \simeq \cO_L(1)^{\oplus 2}$. Then the kernel sheaf $\cE_\varphi$ of the general map  $\varphi: \cE \rightarrow \cO_L(1)$ is a rank 2, orientable, $\mu$-stable instanton sheaf with $c_2 = a \xi f + (b+1)f^2$ satisfying $\cE_\varphi \otimes \cO_{L^{'}} \simeq \cO_{L^{'}}(1)^{\oplus 2}$ for a generic line $L^{'} \in |f^2|$ and $\Ext^2 (\cE_\varphi , \cE_\varphi) = 0$, $\dim \Ext^1 (\cE_\varphi , \cE_\varphi) = \dim \Ext^1 (\cE, \cE) +4.$
\end{proposition}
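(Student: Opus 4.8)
The plan is to realise $\cE_\varphi$ as an elementary transformation, transport the numerical and stability data from $\cE$, and then chase a chain of $\Ext$ long exact sequences to pin down $\Ext^1$ and $\Ext^2$. Write $\imath\colon L\hookrightarrow X_c$ for the inclusion and set $\cT:=\imath_*\cO_L(1)$. Since $f^2\cdot f=0$ and $f^2\cdot\xi=1$ we have $\cO_{X_c}(h)|_L\simeq\cO_L(1)$, so $\cT$ is a rank $0$ $h$-instanton sheaf of degree $1$ by Lemma~\ref{cor: O_L(1) is rank 0 instanton}, and adjunction gives $\Hom(\cE,\cT)\simeq\Hom_L(\cE|_L,\cO_L(1))\simeq\bC^2$ because $\cE|_L\simeq\cO_L(1)^{\oplus2}$. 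A general $\varphi$ — equivalently, one whose associated map $\psi\colon\cO_L(1)^{\oplus2}\to\cO_L(1)$ is non-zero — is an epimorphism, so $\cE_\varphi=\ker\varphi$ is an elementary transformation of $\cE$ along $\cT$; it has rank $2$ and, by Lemma~\ref{lem:kernel is instanton sheaf}, is an orientable $h$-instanton sheaf with defect $0$ and $c_1(\cE_\varphi)=c_1(\cE)=2\xi+3f$. Comparing Chern characters in $0\to\cE_\varphi\to\cE\to\cT\to0$, with $\operatorname{ch}_2(\cT)=f^2$, gives $c_2(\cE_\varphi)=c_2(\cE)+f^2=a\xi f+(b+1)f^2$. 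Any rank-$1$ subsheaf of $\cE_\varphi$ is also a rank-$1$ subsheaf of $\cE$, hence of slope $<\mu(\cE)=\mu(\cE_\varphi)$, so $\cE_\varphi$ is $\mu$-stable, in particular simple. The locus $U\subseteq\Lambda_L\simeq\bP^2$ of lines $L'$ with $\cE|_{L'}\simeq\cO_{L'}(1)^{\oplus2}$ is open (semicontinuity) and contains $L$, hence is dense; for $L'\in U$ with $L'\cap L=\emptyset$ — a dense open condition, since distinct members of $\Lambda_L$ are disjoint — tensoring the defining sequence with $\cO_{L'}$ kills the $\cT$-terms, so $\cE_\varphi\otimes\cO_{L'}\simeq\cE\otimes\cO_{L'}\simeq\cO_{L'}(1)^{\oplus2}$.

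The real work is the $\Ext$-computation. Since $\cE$ is locally free with $\cE|_L\simeq\cO_L(1)^{\oplus2}$, one has $\cE^\vee\otimes\cT\simeq\imath_*\cO_L^{\oplus2}$ and $\cE^\vee\otimes\imath_*\cO_L(-1)\simeq\imath_*\cO_L(-2)^{\oplus2}$, whence $\Ext^\bullet(\cE,\cT)=(\bC^2,0,0,0)$ and $\Ext^\bullet(\cE,\cO_L(-1))=(0,\bC^2,0,0)$; since $K_{X_c}=-2\xi-f$ gives $\omega_{X_c}|_L\simeq\cO_L(-2)$, Serre duality also yields $\Ext^\bullet(\cT,\cE)=(0,0,\bC^2,0)$. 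Applying $\Hom(\cE,-)$ to $0\to\cE_\varphi\to\cE\to\cT\to0$ and using $\Ext^1(\cE,\cT)=\Ext^2(\cE,\cT)=0$, $\Ext^2(\cE,\cE)=0$ (hypothesis) and $\Ext^3(\cE,\cE)=0$ (as $\cE$ is $\mu$-stable and $-K_{X_c}$ ample, cf.\ Lemma~\ref{lem:dim_extEE}), I get $\Ext^2(\cE,\cE_\varphi)=\Ext^3(\cE,\cE_\varphi)=0$. Feeding this into the long exact sequence obtained by applying $\Hom(-,\cE_\varphi)$ to the same sequence produces at once $\Ext^3(\cE_\varphi,\cE_\varphi)=0$ and an isomorphism $\Ext^2(\cE_\varphi,\cE_\varphi)\simeq\Ext^3(\cT,\cE_\varphi)$, and Serre duality rewrites the latter as $\Hom_{X_c}(\cE_\varphi,\cT\otimes\omega_{X_c})^\vee=\Hom_{X_c}(\cE_\varphi,\cO_L(-1))^\vee$. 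So the proposition reduces to showing $\Hom_{X_c}(\cE_\varphi,\cO_L(-1))=0$ and then counting an Euler characteristic.

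For the vanishing, I would apply $\Hom(-,\cO_L(-1))$ to $0\to\cE_\varphi\to\cE\xrightarrow{\varphi}\cT\to0$: since $\Hom_{X_c}(\cE,\cO_L(-1))=\Hom_L(\cO_L(1)^{\oplus2},\cO_L(-1))=0$, this identifies $\Hom_{X_c}(\cE_\varphi,\cO_L(-1))$ with $\ker\big(\varphi^{\ast}\colon\Ext^1_{X_c}(\cT,\cO_L(-1))\to\Ext^1_{X_c}(\cE,\cO_L(-1))\big)$. The local-to-global spectral sequence, using $\cN_{L|X_c}\simeq\cO_L^{\oplus2}$, identifies the source with $\H1(L,\cO_L(-2))\simeq\bC$ and the target with $\H1(L,\cO_L(-2)^{\oplus2})\simeq\bC^2$; and since $\varphi$ factors as $\cE\to\imath_*(\cE|_L)\xrightarrow{\imath_*\psi}\cT$ with $\psi=(\psi_1,\psi_2)\neq0$, the map $\varphi^{\ast}$ is $\H1$ of the split injection $\cO_L(-2)\to\cO_L(-2)^{\oplus2}$, $g\mapsto(\psi_1g,\psi_2g)$, hence injective. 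Therefore $\Ext^2(\cE_\varphi,\cE_\varphi)=0$. Finally, $\chi(\cE_\varphi,\cE_\varphi)$ depends only on $\operatorname{ch}(\cE_\varphi)=\operatorname{ch}(\cE)-\operatorname{ch}(\cT)$, so expanding bilinearly and using the computations above ($\chi(\cE,\cT)=\chi(\cT,\cE)=2$) together with $\chi(\cT,\cT)=0$ (because $\operatorname{ch}_2(\cT)^2=f^4=0$) gives $\chi(\cE_\varphi,\cE_\varphi)=\chi(\cE,\cE)-4$; combined with $\dim\Hom(\cE_\varphi,\cE_\varphi)=1$ and $\Ext^2(\cE_\varphi,\cE_\varphi)=\Ext^3(\cE_\varphi,\cE_\varphi)=0$, this yields $\dim\Ext^1(\cE_\varphi,\cE_\varphi)=\dim\Ext^1(\cE,\cE)+4$.

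The step I expect to be the main obstacle is the injectivity of $\varphi^{\ast}$ above: one must chase this functorial map through the spectral sequences and confirm that, precisely because $\cE|_L$ is balanced and $\varphi$ is fibrewise non-zero, it coincides with the split injection described — this is the one place where both hypotheses of the proposition are genuinely used. (Equivalently, it can be phrased as surjectivity of the map $\Ext^2(\cT,\cE)\to\Ext^2(\cT,\cT)$ induced by $\varphi$, which again amounts to $\psi\neq0$.) Everything else is routine diagram chasing with the exact sequences displayed above.
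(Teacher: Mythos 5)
Your proposal is correct and follows essentially the same route as the paper: elementary transformation along the rank 0 instanton $\cO_L(1)$, inherited stability and generic restriction, then the $\Ext$ long exact sequences plus Serre duality reducing everything to $\Hom(\cE_\varphi,\cO_L(-1))=0$, which both arguments obtain from $\cN_{L|X_c}\simeq\cO_L^{\oplus2}$, the balanced splitting $\cE|_L\simeq\cO_L(1)^{\oplus2}$, and genericity of $\varphi$, followed by the same Euler characteristic count giving the $+4$. The only differences are cosmetic: you verify the injectivity of $\varphi^*$ on global $\Ext^1$ via the local-to-global spectral sequence, whereas the paper takes $H^0$ of the sheaf-$\sHom$ exact sequence, and you get $\Ext^3(\cE_\varphi,\cE_\varphi)=0$ from the long exact sequence rather than from stability.
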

\begin{proof}
	We have the exact sequence
	\begin{equation}\label{ses:surjection to L}
		0\longrightarrow\cE_\varphi\longrightarrow\cE\longrightarrow \cO_L (1) \longrightarrow0.
	\end{equation}
	Since $ \cO_L (1)$ is a rank 0 instanton sheaf (thanks to Corollary \ref{cor: O_L(1) is rank 0 instanton})), $\cE_\varphi$ is a rank 2 instanton sheaf by Lemma \ref{lem:kernel is instanton sheaf}. Also, by Lemma \ref{lem:kernel is instanton sheaf}, $c_1 (\cE_\varphi)= 4h + K_{X_c}$ and $c_2 (\cE_\varphi) = a \xi f + (b+1)f^2$ since $c_2(\cO_{L} (1)) = f^2$.
	
	We notice that any sheaf $\mu$-destabilizing $\cE_\varphi$ must also $\mu$-destabilize $\cE$. Since we assume that $\cE$ is $\mu$-stable, it follows that $\cE_\varphi$ is $\mu$-stable as well.
	
	Clearly, a general element $L^{'}$ of $\Lambda_L$ does not intersect $L$. Thus, the restriction of sequence \eqref{ses:surjection to L} to each general $L^{'}$ in $\Lambda_L$ remains exact, hence $\cE_\varphi\otimes\cO_{L^{'}} \cong \cE \otimes \cO_{L^{'}} \cong \cO_{L^{'}} (1) ^{\oplus2}$. 
	
	Next, we compute $\dim\Ext^i_{X_c}\big(\cE_\varphi,\cE_\varphi\big)$, for $i\ge1$. By applying the functor $\Hom_{X_c}\big(\cE,-\big)$ to sequence in display \eqref{ses:surjection to L}, we obtain $\Ext^2_{X_c}\big(\cE,\cE_\varphi\big)= \Ext^2_{X_c}\big(\cE,\cE \big) = 0$, because $\Ext^i_{X_c}\big(\cE,\cO_L (1) \big) \cong H^i\big(L,\cO_L^{\oplus2} \big)$, $i\ge1$. Thus, by applying $\Hom_{X_c}\big(-,\cE_\varphi\big)$ to sequence \eqref{ses:surjection to L} we obtain 
	$$
	\Ext^2_{X_c}\big(\cE_\varphi,\cE_\varphi\big) \subseteq \Ext^3_{X_c}\big(\cO_L (1),\cE_\varphi\big).
	$$
	
	Serre duality implies $\Ext^3_{X_c}\big(\cO_L (1) , \cE_\varphi \big) \cong \Hom_{X_c}\big(\cE_\varphi,\cO_L(-1) \big)^{\vee}$. Since $L$ is a locally complete intersection subscheme in $X_c$, we have the isomorphism $\sExt^i (\cO_{L}, \cO_L) \simeq \sExt^{i-1} (\cI_{L}, \cO_{L}) = \bigwedge^i \cN_{L\vert X_c}$. From $\cN_{L\vert X_c} \cong \cO_{\p1}^{\oplus2}$ we deduce  $\sExt^1_{X_c}\big(\cO_L,\cO_L(-1) \big) \cong \cO_L(-1)^{\oplus2}$. By the previous observations, applying $\sHom_{X_c}\big(-,\cO_L(-1)\big)$ to sequence \eqref{ses:surjection to L} we obtain the exact sequence
	\begin{align*}
		0 \longrightarrow \cO_L(-2) \longrightarrow \cO_L(-2)^{\oplus2} \longrightarrow \sHom_{X_c}\big(\cE_\varphi, \cO_L(-1)\big) \longrightarrow \cO_L(-2)^{\oplus2} \longrightarrow0,
	\end{align*}
	because $\sExt^1_{X_c}\big(\cE,\cO_L(-1)\big)=0$. It follows that 
	$$
	\Hom_{X_c}\big(\cE_\varphi,\cO_L(-1)\big)=H^0\big(X_c,\sHom_{X_c}\big(\cE_\varphi,\cO_L(-1)\big)\big)=0,
	$$
	hence $\Ext^2_{X_c}\big(\cE_\varphi,\cE_\varphi\big)=0$. The vanishing $\Ext^3_{X_c}\big(\cE_\varphi,\cE_\varphi\big)=0$ follows from \cite[Lemma 2.2]{ACG22} since $\cE_\varphi$ is $\mu$--stable. Also,  $\Ext^0_{X}\big(\cE_\varphi,\cE_\varphi\big)=1$ because $\cE_\varphi$ is simple ($\mu$--stable sheaves are simple). It follows that
	$$
	\dim\Ext^1_{X_c}\big(\cE_\varphi,\cE_\varphi\big)=1-\chi(\cE_\varphi,\cE_\varphi).
	$$
	
	By applying $\Hom_{X_c}(\cE_\varphi,-\big)$, $\Hom_{X_c}(-,\cE\big)$ and $\Hom_{X_c}(-,\cO_L\big)$ to sequence \eqref{ses:surjection to L} we deduce that
	$$
	\chi(\cE_\varphi,\cE_\varphi)=\chi(\cE,\cE)-\chi(\cE,\cO_L)-\chi(\cO_L,\cE)+\chi(\cO_L,\cO_L).
	$$
	We have $\chi(\cE,\cO_L) = \dim \Ext^0_{X_c} (\cE, \cO_{L}) = 0$ and $\chi(\cO_L,\cE)=- \chi(\cE, \cO_L(-2)) = \dim \Ext^1_{X_c} (\cE, \cO_{L}(-2)) = 4$. Finally, we deduce from the cohomology of sequence \eqref{ses: resolution of line} that $\chi(\cO_L,\cO_L)=0$.
	
	Therefore
	$$
	\dim\Ext^1_{X_c}\big(\cE_\varphi,\cE_\varphi\big) = \dim \Ext_{X_c}^1 (\cE, \cE) +4,
	$$
	as desired.
\end{proof}

\begin{remark}\label{remark: Unique}
	Let $\cE_\varphi$ be the sheaf defined by sequence \eqref{ses:surjection to L}. If $\Gamma\subseteq X_c$ is any subscheme of pure dimension $1$, then
	$$\Ext^1_{X_c}\big(\cO_\Gamma,\cE_\varphi\big)=
	\left\lbrace\begin{array}{ll} 
		0\quad&\text{if $L\not\subseteq \Gamma$,}\\
		1\quad&\text{if $L\subseteq \Gamma$.}
	\end{array}\right. $$
	
	Indeed, by applying $\Hom_{X_c}\big(\cO_\Gamma,-\big)$ to sequence \eqref{ses:surjection to L}, we obtain 
	$$ \Ext^1_{X_c} \big(\cO_\Gamma,\cE_\varphi\big)\cong \Hom_{X_c} \big(\cO_\Gamma,\cO_L\big)\cong H^0\big(X_c, \sHom_{X_c}\big(\cO_\Gamma,\cO_L\big)\big). $$
	We conclude by noticing that
	$$ \sHom_{X_c}\big(\cO_\Gamma,\cO_L\big)=
	\left\lbrace\begin{array}{ll} 
		0\quad&\text{if $L\not\subseteq \Gamma$,}\\
		\cO_L\quad&\text{if $L\subseteq \Gamma$.}
	\end{array}\right. $$
\end{remark}

We are finally ready to state one of the main result of this section, which completes the proof of Main Theorem \ref{mthm}.

\begin{theorem}
	For each $\alpha \ge 6$ when $c\ge1$, $\alpha\ge7$ when $c=0$ and $\beta \geq 2$, there exists rank 2, orientable, $\mu$-stable instanton bundle $\cE$ on $(X_c,h)$ with $c_2 = \alpha \xi f + \beta f^2$. Moreover, $\dim \Ext_{X_c}^1 (\cE, \cE) = 10 \alpha  + 4\beta +4c - 62$ and $\Ext_{X_c}^2 (\cE, \cE) = 0$.
\end{theorem}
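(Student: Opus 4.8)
The plan is to prove the theorem by induction on $\beta$, using the construction of Proposition \ref{thm:existence_of_instanton_by_Serre} as the base case $\beta = 2$ and the elementary transformation of Proposition \ref{prop: kernel instanton sheaf} together with a deformation argument for the inductive step. First I would record the base case: Proposition \ref{thm:existence_of_instanton_by_Serre} provides, for each admissible $\alpha$, a rank 2 orientable $\mu$-stable $h$-instanton bundle $\cE_0$ on $(X_c,h)$ with $c_2(\cE_0) = \alpha \xi f + 2 f^2$; Proposition \ref{prop:dim.mod} gives $\Ext^2(\cE_0,\cE_0) = 0$ and $\dim \Ext^1(\cE_0,\cE_0) = 10\alpha + 4c - 54$, which matches the asserted formula $10\alpha + 4\beta + 4c - 62$ at $\beta = 2$. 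Proposition \ref{prop: splitting type of serre constructed} moreover guarantees that $\cE_0$ restricts to $\cO_L(1)^{\oplus 2}$ on a generic line $L\in|f^2|$, so the hypotheses needed to start the induction machinery are all in place.

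For the inductive step, suppose we have a rank 2 orientable $\mu$-stable $h$-instanton bundle $\cE$ with $c_2 = \alpha\xi f + \beta f^2$, $\Ext^2(\cE,\cE)=0$, $\dim\Ext^1(\cE,\cE) = 10\alpha + 4\beta + 4c - 62$, and $\cE|_L \simeq \cO_L(1)^{\oplus 2}$ for generic $L\in|f^2|$. Proposition \ref{prop: kernel instanton sheaf} then produces a rank 2 orientable $\mu$-stable \emph{non-locally free} $h$-instanton sheaf $\cE_\varphi$ with $c_2 = \alpha\xi f + (\beta+1)f^2$, with $\Ext^2(\cE_\varphi,\cE_\varphi) = 0$, $\dim\Ext^1(\cE_\varphi,\cE_\varphi) = \dim\Ext^1(\cE,\cE) + 4 = 10\alpha + 4(\beta+1) + 4c - 62$, and $\cE_\varphi|_{L'} \simeq \cO_{L'}(1)^{\oplus 2}$ for generic $L'\in|f^2|$. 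The remaining task is to deform $\cE_\varphi$ to a locally free sheaf $\cE'$ with the same invariants, the same generic splitting type, and still satisfying $\Ext^2 = 0$: since $\Ext^2(\cE_\varphi,\cE_\varphi) = 0$, the sheaf $\cE_\varphi$ is a smooth point of the moduli space $\mathcal{M}_{X_c}(2, 2\xi+3f, \alpha\xi f + (\beta+1)f^2)$, so a versal deformation exists of the expected dimension; one shows a general nearby deformation is locally free (the non-locally free locus is a proper closed subset — here I would invoke the reflexive hull / double-dual obstruction theory as in Theorem \ref{thm:double_dual_bundle}, noting $\cE_\varphi^{\vee\vee}$ is a bundle and the singular scheme $\cT_{\cE_\varphi}$ can be smoothed out) and that the $h$-instanton conditions, being open cohomological vanishing conditions, and $\mu$-stability, and the generic splitting on lines in $|f^2|$, all persist under small deformation; semicontinuity keeps $\Ext^2 = 0$ and hence $\dim\Ext^1$ unchanged for the general deformation. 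This $\cE'$ is then the desired bundle with $c_2 = \alpha\xi f + (\beta+1)f^2$, completing the induction.

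The main obstacle I anticipate is precisely the smoothing step: one must argue that the non-locally free $h$-instanton sheaf $\cE_\varphi$ genuinely admits a locally free deformation rather than being rigidly singular. The key leverage is the vanishing $\Ext^2(\cE_\varphi,\cE_\varphi) = 0$, which makes the moduli space smooth at $[\cE_\varphi]$ and kills obstructions, so that the local deformation space has exactly dimension $\dim\Ext^1(\cE_\varphi,\cE_\varphi)$; then one needs that the locus of non-locally free sheaves in this smooth family has strictly smaller dimension, which in turn follows because deforming the $0$-dimensional singularity scheme (a finite union of lines in $|f^2|$, with $\cN_{L|X_c}\simeq\cO_L^{\oplus 2}$ and hence unobstructed deformations and smoothings of nearby configurations) picks up the extra four parameters accounted for by Proposition \ref{prop: kernel instanton sheaf}, and a general point of the family no longer has a section vanishing on such a configuration. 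Remark \ref{remark: Unique} is the tool that pins down how $\cE_\varphi$ sits relative to a given $1$-dimensional subscheme and should be used to control this non-locally free locus. Once this is established, the persistence of $\mu$-stability, the $h$-instanton vanishings and the generic splitting type under deformation are routine openness/semicontinuity arguments, and the dimension bookkeeping is the arithmetic already recorded above.
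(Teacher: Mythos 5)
Your overall architecture --- induction on $\beta$ with Proposition \ref{thm:existence_of_instanton_by_Serre} as base case, the elementary transformation of Proposition \ref{prop: kernel instanton sheaf} for the inductive step, and a deformation of $\cE_\varphi$ back to a locally free sheaf --- is exactly the paper's strategy, and your dimension bookkeeping is correct. The gap is in the one step you yourself flag as the main obstacle: you assert that ``the non-locally free locus is a proper closed subset'' of the component $\cM_1$ through $[\cE_\varphi]$, and you justify this by counting the sheaves obtained as elementary transformations of bundles in $\cM$ along lines in $\Lambda_L$ (the locus the paper calls $\cM_{bad}$, of dimension $\dim\cM_1-1$). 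But that count only controls the non-locally free sheaves whose torsion quotient $\cE^{\vee\vee}/\cE$ is $\cO_L(1)$ for a line $L\in\Lambda_L$. A priori a deformation $\mathfrak E_s$ of $\cE_\varphi$ inside $\cM_1$ could fail to be locally free with a completely different singularity: by Theorem \ref{thm:double_dual_bundle} its quotient $\mathfrak T_s$ is some rank $0$ $h$-instanton sheaf (which is $1$-dimensional, not $0$-dimensional as your sketch suggests), but nothing in your argument forces it to be supported on a curve of class $f^2$ or to have degree $1$, so such a sheaf need not lie in $\cM_{bad}$ and the dimension count does not apply to it. Smoothness of the moduli space at $[\cE_\varphi]$ (from $\Ext^2=0$) gives unobstructedness, not local freeness of the general deformation; there is no general principle that a non-locally free sheaf which is a smooth point of its moduli space deforms to a bundle.

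The paper closes this gap by a contradiction argument that you would need to reproduce. One first chooses a flat family $\mathfrak E\to S$ through $\cE_\varphi$ whose general member avoids $\cM_{bad}$ (possible precisely by the dimension count), and assumes every $\mathfrak T_s$ has non-empty support. Writing $c_2(\mathfrak T_s)=\eta_s\xi f+\vartheta_s f^2$, semicontinuity applied to $h^2\big(\mathfrak E_s(a\xi+bf)\big)\le h^2\big(\cE_\varphi(a\xi+bf)\big)=-a-2$ for $a,b\ll 0$, combined with Riemann--Roch for the $1$-dimensional sheaf $\mathfrak T_s$ and the vanishing $h^1(\mathfrak T_s(th))=-\chi(\mathfrak T_s(th))$ for $t\le -2$, forces $\eta_s=0$ and $\vartheta_s=-1$, hence $\mathfrak T_s\simeq\cO_L(1)$ for some $L\in\Lambda_L$. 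This makes $\mathfrak T\to S$ and then $\mathfrak E^{\vee\vee}\to S$ flat, and Remark \ref{remark: Unique} identifies the restriction of $0\to\mathfrak E\to\mathfrak E^{\vee\vee}\to\mathfrak T\to 0$ at $s_0$ with the defining sequence of $\cE_\varphi$, so every $\mathfrak E_s$ lies in $\cM_{bad}$ after all --- contradicting the choice of family. This Chern-class/semicontinuity computation identifying $\mathfrak T_s$ is the essential missing ingredient in your proposal; the rest (persistence of stability, the instanton vanishings, the splitting type on lines, and $\Ext^2=0$ under small deformation) is routine, as you say.
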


\begin{proof}
	We will prove it by induction on $\beta$. The base case $\beta = 2$ has been already proved in Theorem \ref{thm:existence_of_instanton_by_Serre}. 
	
	Now, assume that there exist a $\mu$-stable, orientable instanton bundle $\cE$ of rank 2 with $c_2 = \alpha \xi f + \beta f^2$ such that 
	$$ \dim \Ext^1_{X_c}\big(\cE,\cE\big)=10\alpha+4\beta + 4c -62,\qquad \Ext^2_{X_c}\big(\cE,\cE\big)=\Ext^3_{X_c}\big(\cE,\cE\big)=0. $$
	Thus $\cE$ represents a point in a component $\cM$ of the moduli space of $\mu$-stable sheaves of rank $2$ with Chern classes $c_i(\cE)$, which is smooth at $\cE$ of dimension
	$$
	\dim\Ext^1_{X_c}\big(\cE,\cE\big)=10\alpha+4\beta +4c-62.
	$$
	
	We constructed a sheaf $\cE_\varphi$ in Proposition \ref{prop: kernel instanton sheaf} satisfying all the properties of an instanton bundle, but the one of being locally free. Such a sheaf corresponds to a point in a component $\cM_1$ of the moduli space of $\mu$--stable sheaves of rank $2$ with Chern classes $c_i(\cE_\varphi)$. Moreover, $\cM_1$ is smooth at $\cE_\varphi$ of dimension
	$$
	\dim\Ext^1_{X_c}\big(\cE,\cE\big)=10\alpha+4\beta +4c-58.
	$$
	
	The locus points $\cM_{bad}\subseteq\cM_1$ corresponding to such sheaves are parameterized by the points of $\cM$ (the choice of $\cE$), the points in $\Lambda_L$ (the lines $L$) and elements inside $\bP\big(\Hom_{\p1}\big(\cO_{\p1}(1)^{\oplus2},\cO_{\p1}(1) \big)\big)$ (the morphism $\Hom_{X_c}\big(\cE\otimes\cO_L,\cO_L\big)$ up to scalars). It follows that
	$$
	\dim(\cM_{bad})\le 10\alpha+4\beta +4c-59 < 10 \alpha+4\beta +4c-58 = \dim(\cM_1).
	$$
	
	It follows the existence of a flat family over an integral base $\mathfrak E\to S$ with $\mathfrak E_{s_0}\cong\cE_\varphi$ and $\mathfrak E_s\not\in\cM_{bad}$ for $s\ne s_0$. Thanks to \cite[Satz 3]{BPS80} we can assume $\Ext^2_{X_c}\big(\mathfrak E_s,\mathfrak E_s\big)=0$ for $s\in S$.
	
	For all $s\in S$, we have a natural exact sequence
	\begin{equation}
		\label{seq: bi dual}
		0\longrightarrow\mathfrak E_s\longrightarrow(\mathfrak E_s)^{\vee\vee}\longrightarrow\mathfrak T_s\longrightarrow0.
	\end{equation}
	Then, by Theorem \ref{thm:double_dual_bundle}, $\mathfrak T_s$ is a rank $0$ instanton sheaf and $(\mathfrak E_s)^{\vee\vee}$ is a rank 2 instanton bundle on $X_c$. By semicontinuity we can certainly assume that $\mathfrak E_s$ is a $\mu$--stable, $\h0 \big(\mathfrak E_s (-h)\big)=\h1 \big(\mathfrak E_s(-2h)\big)=0$ and  $\mathfrak E_s\otimes\cO_L\cong\cO_{\p1}(1)^{\oplus2}$ for general $L\in\Lambda_L$. 
	
	If there is $s\in S$ such that the support of $\mathfrak T_s$ is empty, then $\mathfrak E_s\cong(\mathfrak E_s)^{\vee\vee}$. We already know that $\Ext^2_{X_c}\big(\mathfrak E_s,\mathfrak E_s\big)=0$: an easy computation returns also the other dimensions, hence the theorem is proved. 
	
	In the other case, the support of $\mathfrak T_s$ is non-empty for each $s\in S$. Let $c_2(\mathfrak T_s)=\eta_s\xi f+\vartheta_s f^2$ and $\cO_{X_c}(D):=\cO_{X_c}(a\xi+b f)$. If $a,b \ll0$, then we know that $\h{i} \big(\cE (D)\big)=0$ for $i = 1,2$ and $\h0 \big(\cO_L (1+ L \cdot D) \big) = 0$. So, by the short exact sequence \eqref{ses:surjection to L}, we have 
	\begin{equation}\label{middle step of main theorem 1}
		\h2 \big( \cE_\varphi(D) \big) = \h1 \big(\cO_L (1+ L \cdot D) \big) = \h1 \big(\p1, \cO_{\p1} (a + 1) \big) = -a-2.
	\end{equation}
	Similarly, if $a,b \ll0$, then we know that $\h{i} \big((\mathfrak E_s)^{\vee\vee}(D)\big)=0$ for $i = 1,2$ and $\h0 (\mathfrak T_s (D)) = 0$. Also, by semicontinuity, $\h2\big(\mathfrak E_s(D)\big)\le h^2\big(\cE_\varphi(D)\big)$. Then, by the short exact sequence \eqref{seq: bi dual} and the equality \ref{middle step of main theorem 1}, we have 
	\begin{equation}\label{middle step of main theorem 2}
		-\chi \big( \mathfrak T_s(D) \big) = \h1\big(\mathfrak T_s(D)\big)=\h2\big(\mathfrak E_s(D)\big) \le -a-2.
	\end{equation}
	
	Now, one can compute Chern classes of $\cO_{L}(D)$ by using the short exact sequence \eqref{ses: resolution of line}. Then, $c_1 \big(I_{L} (D) \big) = D$, $c_2 \big(I_{L} (D) \big) = f^2$ and $c_3 \big(I_{L} (D) \big) = -D\cdot f^2$.\\
	By the short exact sequence \eqref{ses:ideal_sheaf}, we have $c_1 \big(\cO_{L} (D) \big) = 0$, $c_2 \big(\cO_{L} (D) \big) = -f^2$ and $c_3 \big(\cO_{L} (D) \big) = 2D\cdot f^2$.\\
	Similarly, we have $c_1 \big( \cE_\varphi(D) \big) = 2\xi +3f +2D$, $c_2 \big( \cE_\varphi(D) \big) = c_2 (\cE(D)) +f^2$ and $c_1 \big( \cE_\varphi(D) \big) = 0$ by using the short exact sequence \eqref{ses:surjection to L}.\\
	Since $\mathfrak E_s$ is an element of $\cM_1$, $c_i (\mathfrak E_s) = c_i (\cE_\varphi)$. Moreover, $c_1 \big(\mathfrak E_s^{\vee \vee} (D) \big) = 2\xi + 3f +2D$ and $c_3 \big(\mathfrak E_s^{\vee \vee} (D) \big) = 0$ as $\mathfrak E_s^{\vee \vee}$ is a rank 2 instanton bundle.\\
	Lastly, we have $c_1 \big(  \mathfrak T_s(D) \big) = 0$, $c_2 \big(  \mathfrak T_s(D) \big) = c_2 (\mathfrak E_s^{\vee \vee}) - c_2 (\mathfrak E_s)$ and $c_3 \big(  \mathfrak T_s(D) \big) = - c_1 (\mathfrak E_s (D)) \cdot [c_2 (\mathfrak E_s^{\vee \vee}) - c_2 (\mathfrak E_s)]$ by using the short exact sequence \eqref{seq: bi dual}. Notice that $c_2 \big(  \mathfrak T_s(D) \big)$ is independent on $D$.
	
	Then, using the Riemann--Roch Theorem, we have 
	\begin{align*}
		\chi (\mathfrak T_s(D)) &= \frac{c_3 \big(\mathfrak T_s(D) \big) + \omega_{X_c} \cdot c_2 \big(\mathfrak T_s(D) \big)}{2} = \frac{ c_2 \big(\mathfrak T_s \big) \cdot \big( \omega_{X_c} - c_1 \big(\mathfrak T_s(D) \big) \big)}{2}\\
		&= (2\eta_s + \vartheta_s)(-2-a) +  \eta_s (-2-b).
	\end{align*}
	So, by the inequality \eqref{middle step of main theorem 2}, if $a,b \ll0$, then we have
	\begin{equation*}
		(2\eta_s + \vartheta_s  +1) (2+a) +  \eta_s (2+b) \leq 0.
	\end{equation*}
	Therefore, we have both 
	\begin{equation}\label{middle step of main theorem 3}
		(2\eta_s + \vartheta_s  +1) \geq 0 \text{ and } \eta_s \geq 0
	\end{equation}
	when $a,b \ll0$.\\ Moreover, 
	$$\chi \big( \mathfrak T_s(th) \big) = -\h1 ( \mathfrak T_s(th)) \text{ for } t\leq -2.$$
	So, $(-2-t)(3\eta_s + \vartheta_s) \leq 0 $ for $t \leq -2$. Therefore, we have 
	\begin{equation}\label{middle step of main theorem 4}
		(3\eta_s + \vartheta_s) <0.
	\end{equation}
	If we combine inequalities \eqref{middle step of main theorem 3} and \eqref{middle step of main theorem 4}, then we have $\eta_s = 0$ and $\vartheta_s = -1$. So, $c_2(\mathfrak T_s)= -f^2$ and $c_3(\mathfrak T_s) = (-2\xi -3f) \cdot (-f^2) = 2$. Therefore, $\mathfrak T_s \simeq \cO_{L} (1)$.
	
	It follows the existence of a morphism $S\to \Lambda_L$, hence $\mathfrak T\to S$ turns out to be a flat family. 
	
	The natural sequence
	\begin{equation}
		\label{seqDeformFamily}
		0\longrightarrow\mathfrak E\longrightarrow\mathfrak E^{\vee\vee}\longrightarrow\mathfrak T\longrightarrow0
	\end{equation}
	and the flatness of the families $\mathfrak E\to S$ and $\mathfrak T\to S$ yield the flatness of the induced family $\mathfrak E^{\vee\vee}\to S$. In particular $(\mathfrak E^{\vee\vee})_s\cong(\mathfrak E_s)^{\vee\vee}$ for each $s\in S$, hence $(\mathfrak E^{\vee\vee})_s$ is  a vector bundle for each $s\in S$. It follows that the restriction of sequence \eqref{seqDeformFamily} to $X_c \times\{\ s_0\ \}\subseteq X_c \times S$ must be non--split, hence it coincides with sequence \eqref{ses:surjection to L}, thanks to Remark \ref{remark: Unique}. In particular $(\mathfrak E_{s_0})^{\vee\vee}\cong\cE$, hence $(\mathfrak E_{s})^{\vee\vee}\in\cM$ and, consequently, $\mathfrak E_s\in\cM_{bad}$ for each $s\in S$, contradicting our initial choice. We conclude that the case where the support of $\mathfrak T_s$ is non-empty for each $s\in S$ cannot occur. 
\end{proof}


\section{Another series of instanton bundles on $X_c$} \label{sec:ulrich}

Using a completely different technique, we now present a second infinite series of $h$-instanton bundles on $(X_c,h)$, including a family of Ulrich bundles.

The starting point is the following series of rank 2 bundles on $\mathbb{P}^2$, indexed by positive integers $l\ge1$:
\begin{equation} \label{eq:bdl-g}
	0 \lra G \lra V_{2l+3}\otimes\cO_{\p2}(l+2)\stackrel{\mu}{\lra} V_{2l+1}\otimes\cO_{\p2}(l+3) \lra  0,
\end{equation}
where $V_d$ denotes a $d$-dimensional vector space. The existence of surjective morphisms $\mu$ in $\mathbb{M}:=\Hom(V_{2l+3},V_{2l+1})\otimes\H0(\cO_{\p2}(1))$ is guaranteed by \cite[Proposition 2.1]{Fra00}. It is easy to check that $c_1(G)=3$ and $c_2(G)=l^2+2l+3$.

Set $\mathbb{M}_{0}$ to be the open subset of $\mathbb{M}$ consisting of surjective morphisms. The group $\mathbb{G}:=GL(V_{2l+3})\times GL(V_{2l+1})/\mathbb{C}^*\cdot(\mathbf{1}_{V_{2l+3}},\mathbf{1}_{V_{2l+1}})$ acts on $\mathbb{M}_{0}$ in the obvious manner, so that $\ker(\mu)\simeq\ker(\mu')$ if and only if $\mu$ and $\mu'$ are in the same $\mathbb{G}$ orbit. It follows that the family of isomorphism classes of bundles obtained as the kernel of a morphism $\mu\in\mathbb{M}_0$ is given by the quotient $\mathbb{M}_0/\mathbb{G}$, whose dimension is given by  
$$ \dim(\mathbb{M}_0) - \dim(\mathbb{G}) = 4l(l+2). $$

Dualizing the exact sequence in display \eqref{eq:bdl-g} and tensoring the resulting sequence by $G$ and taking cohomology, one checks that $\H2(G\otimes G^\vee)=0$. Therefore, we get that
$$ \h1(G\otimes G^\vee)=4c_2(G)-c_1(G)^2-3=4l(l+2) $$
meaning that each bundle $G$ as in display \eqref{eq:bdl-g} is a generic point in the moduli space $\cM_{\mathbb{P}^2}(2,3,l^2+2l+3)=\cM_{\mathbb{P}^2}(2,-1,l^2+2l+1)$ of $\mu$-stable rank 2 bundles on $\p2$ with Chern classes $c_1=-1$ and $c_2=l^2+2l+1$.

Let us now see how this series of bundles on $\mathbb{P}^2$ generate $h$-instanton bundle on $(X_c,h)$.

\begin{proposition}
	For each rank 2 bundle $G$ on $\cO_{\p2}$ given by the exact sequence in display \eqref{eq:bdl-g}, the rank 2 bundle $\cG:=\pi^*(G)\otimes\cO_{X_c}(\xi)$ is a $\mu$-stable, orientable $h$-instanton bundle on $(X_c,h)$ with $c_2(\cG)=5\xi f+(l^2+2l+3-c)f^2$ and charge $(l-1)(l+3)$. In particular, $\cG$ is Ulrich when $l=1$.
\end{proposition}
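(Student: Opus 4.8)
The plan is to transfer every assertion about $\cG=\pi^{*}(G)\otimes\cO_{X_c}(\xi)$ to a statement about $G$ on $\p2$ by pushing forward along $\pi$. First I would record the numerical data: using $\pi^{*}c_1(G)=3f$, $\pi^{*}c_2(G)=(l^2+2l+3)f^2$ and the relation $\xi^2=2\xi f-cf^2$, one gets $c_1(\cG)=\pi^{*}c_1(G)+2\xi=2\xi+3f=4h+K_{X_c}$, so $\cG$ is orientable, and $c_2(\cG)=\pi^{*}c_2(G)+\xi\cdot\pi^{*}c_1(G)+\xi^2=5\xi f+(l^2+2l+3-c)f^2$. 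Feeding $\alpha=5$ and $\beta=l^2+2l+3-c$ into Lemma \ref{lem:bounds_2ndChern} gives $k(\cG)=15+(l^2+2l+3-c)-21+c=(l-1)(l+3)$, which vanishes exactly when $l=1$; thus $\cG$ is Ulrich precisely for $l=1$.

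Next I would verify the instanton condition. Since $\cG$ is orientable and locally free of rank $2$, Lemma \ref{lem:rk2instanton} reduces this to $\h0(\cG(-h))=\h1(\cG(-2h))=0$. Writing $h=\xi+f$, one has $\cG(-2h)=\pi^{*}(G(-2))\otimes\cO_{X_c}(-\xi)$; as $\cO_{X_c}(-\xi)$ restricts to $\cO_{\p1}(-1)$ on every fibre, $R^{j}\pi_{*}\cO_{X_c}(-\xi)=0$ for all $j$, hence by the projection formula and the Leray spectral sequence $\H{i}(X_c,\cG(-2h))=0$ for all $i$, in particular $\h1(\cG(-2h))=0$. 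Likewise $\cG(-h)=\pi^{*}(G(-1))$, so $\pi_{*}\cG(-h)=G(-1)$, $R^{1}\pi_{*}\cG(-h)=0$, and therefore $\h0(\cG(-h))=\h0(\p2,G(-1))$.

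The heart of the matter is thus to prove $\h0(\p2,G(-1))=0$, and this is the step I expect to be the main obstacle. Twisting \eqref{eq:bdl-g} by $\cO_{\p2}(-1)$ and taking cohomology identifies $\h0(\p2,G(-1))$ with the kernel of the multiplication map $m_{\mu}\colon V_{2l+3}\otimes\H0(\cO_{\p2}(l+1))\to V_{2l+1}\otimes\H0(\cO_{\p2}(l+2))$; since $\H2(\cO_{\p2}(l+1))=0$ one also gets $\H2(G(-1))=0$ and $\chi(G(-1))=-(l-1)(l+3)\le 0$, so injectivity of $m_{\mu}$ is equivalent to $\h1(\p2,G(-1))=(l-1)(l+3)$. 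A dimension count shows the target of $m_{\mu}$ exceeds the source by exactly $(l-1)(l+3)\ge 0$, so injectivity is the expected generic behaviour; I would establish it by invoking the genericity of the surjections $\mu$ supplied by \cite[Proposition 2.1]{Fra00} (which endows $G$ with natural cohomology, whence $\chi(G(-1))\le 0$ forces $\h0(G(-1))=0$), or, failing a clean citation, by arguing directly that $\mu$ admits no linear syzygy in degree $l+1$. Everything else in the proof is formal once this vanishing is in hand.

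Finally, for $\mu$-stability I would use Proposition \ref{prop:Hoppe}: it suffices to show $\h0(\cG(a\xi+bf))=0$ for every $a,b\in\bZ$ with $(9-c)a+4b<c-15$. Writing $\cG(a\xi+bf)=\pi^{*}(G(b))\otimes\cO_{X_c}((a+1)\xi)$ and pushing forward: when $a\le -2$ one has $\pi_{*}\cO_{X_c}((a+1)\xi)=0$ by Lemma \ref{lem:cohomology_of_line_bundles}, so $\h0=0$ trivially; when $a=-1$, $\h0(\cG(-\xi+bf))=\h0(\p2,G(b))$ and the inequality forces $b\le -2$, so $G(b)$ is $\mu$-stable of negative slope and has no sections; when $a\ge 0$, $\pi_{*}(\cG(a\xi+bf))=G(b)\otimes S^{a+1}F_c$, and since $F_c$ is globally generated $S^{a+1}F_c$ is a quotient of a trivial bundle, so $\h0(\p2,G(b)\otimes S^{a+1}F_c)$ is dominated by a multiple of $\h0(\p2,G(b))$; here the inequality forces $4b\le c-15\le -11$, i.e.\ $b\le -3$, so again $\h0(G(b))=0$. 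Hence $\cG$ is $\mu$-stable, and combining with the previous steps it is a $\mu$-stable, orientable $h$-instanton bundle of charge $(l-1)(l+3)$, Ulrich exactly when $l=1$.
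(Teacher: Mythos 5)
Your reduction to $\p2$ is sound in outline (the Leray/projection-formula computation of $\H{i}(\cG(-2h))$ is in fact cleaner than the paper's), but the argument has a genuine gap exactly where you flag it: the vanishing $\h0(\p2,G(-1))=0$ is never established. Invoking "natural cohomology of a generic $G$" does not match the statement, which asserts the conclusion \emph{for each} surjective $\mu$, and \cite[Proposition 2.1]{Fra00} only supplies existence of surjections, not a genericity statement about cohomology; "no linear syzygy in degree $l+1$" is just a restatement of what must be proved. The missing idea is to dualize: since all three terms of \eqref{eq:bdl-g} are locally free and $G^\vee\simeq G(-3)$, dualizing and twisting gives
$$ 0 \lra V_{2l+1}\otimes\cO_{\p2}(-l) \lra V_{2l+3}\otimes\cO_{\p2}(1-l) \lra G \lra 0, $$
so $\h0(G(-1))\le (2l+3)\,\h0(\cO_{\p2}(-l))+(2l+1)\,\h1(\cO_{\p2}(-l-1))=0$ for every $l\ge1$ and every surjective $\mu$. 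This is precisely how the paper proceeds (it works with the pulled-back dual sequence on $X_c$ and Lemma \ref{lem:cohomology_of_line_bundles}); in particular your multiplication map $m_\mu$ is injective for \emph{every} surjective $\mu$, not just generic ones.

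The stability check also has a flaw in the case $a\ge0$: from a surjection $\cO_{\p2}^{N}\twoheadrightarrow S^{a+1}F_c$ you get a surjection $G(b)^{N}\twoheadrightarrow G(b)\otimes S^{a+1}F_c$, but $\h0$ of a quotient is \emph{not} dominated by $\h0$ of the source (only by $\h0(\mathrm{source})+\h1(\mathrm{kernel})$), so "$\h0(G(b)\otimes S^{a+1}F_c)$ is a multiple of $\h0(G(b))$" does not follow. The correct route, which is the paper's, is to use that $F_c$ is $\mu$-semistable for $c\ge1$, hence so is $G(b)\otimes S^{a+1}F_c$, whose slope is forced negative by the Hoppe inequality; for $c=0$ the bundle $F_0=\cO_{\p2}\oplus\cO_{\p2}(2)$ is \emph{not} semistable and one must instead decompose $S^{a+1}F_0=\bigoplus_{i=0}^{a+1}\cO_{\p2}(2i)$ and check $\h0(G(b+2i))=0$ term by term (the inequality $9a+4b\le-16$ does give $b+2(a+1)\le-2$, so this works, but it needs to be said). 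With these two repairs your argument closes and coincides in substance with the paper's proof.
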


\begin{proof}
	We start by checking that $\cG$ is orientable:
	$$ c_1(\cG) = \pi^*c_1(G) + 2\xi = 2\xi + 3f, $$
	as desired. Note that $\cG$ fits into the following exact sequence
	\begin{equation} \label{eq:instanton-g}
		0 \lra \cG \lra V_{2l+3}\otimes\cO_{X_c}((l+2)f+\xi) \lra V_{2l+1}\otimes\cO_{X_c}((l+3)f+\xi) \lra  0.
	\end{equation}
	from which one can compute that $c_2(\cG)=5\xi f+(l^2+2l+3-c)f^2$, and 
	$\chi(\cG(-h))=-(l-1)(l+3)$; the latter computation uses Lemma \ref{lem:cohomology_of_line_bundles}.
	
	Dualizing the sequence \eqref{eq:instanton-g} and using the isomorphism $\cG\simeq\cG^\vee(3f+2\xi)$, we obtain the exact sequence
	\begin{equation} \label{eq:instanton-g-dual}
		0 \lra V_{2l+1}\otimes\cO_{X_c}(-lf+\xi) \lra V_{2l+3}\otimes\cO_{X_c}((1-l)f+\xi)  \lra \cG \lra  0.
	\end{equation}
	
	To see that $\cG$ is an $h$-instanton, note that
	$$ \h0(\cG(-h)) \le (2l+3)\cdot\h0(\cO_{X_c}(-lf)) + (2l+1)\cdot\h1(\cO_{X_c}(-(l+1)f)) \quad {\rm and} $$
	$$ \h1(\cG(-2h)) \le (2l+3)\cdot\h1(\cO_{X_c}(-(l+1)f-\xi) + (2l+1)\cdot\h2(\cO_{X_c}(-(l+2)f-\xi)). $$
	Using Lemma \ref{lem:cohomology_of_line_bundles} one quickly checks that $\h0(\cG(-h))=\h1(\cG(-2h))=0$, thus $\cG$ is an $h$-instanton bundle.
	
	Finally, using that fact that $\pi_*$ is a right adjoint to $\pi^*$, we note that 
	\begin{align*}
		\Hom_{X_c}(\cG,\cO_{X_c}(a\xi+bf)) & \simeq \Hom_{\mathbb{P}^2}(G,\pi_*\cO_{X_c}((a-1)\xi+bf)) \\
		&\simeq \Hom_{\mathbb{P}^2}(G,(S^{a-1}F_c)(b)).
	\end{align*}
	We must argue that this group vanishes when $\deg(\cO_{X_c}(a\xi+bf))\le\deg(\cG)$, or equivalently
	\begin{equation}\label{eq:ineq2}
		a(9-c)+4b \le 15-c
	\end{equation}
	Since $\pi_*\cO_{X_c}((a-1)\xi+bf)=0$ when $a\le0$, we get that  $\Hom_{X_c}(\cG,\cO_{X_c}(a\xi+bf))=0$ when $a\le0$; so assume from now on that $a\ge1$. 
	
	When $c\ge1$, the rank 2 bundle $F_c$ is $\mu$-semistable and therefore $S^pF_c$ is also $\mu$-semistable with slope $p\cdot \mu(F_c)=p$. It follows that $\Hom_{\mathbb{P}^2}(G,(S^{a-1}F_c)(b))=0$ when $\mu(G)>\mu((S^{a-1}F_c)(b))$, equivalently, $\frac{3}{2}>a-1+b$. Note that the inequality in display \eqref{eq:ineq2} implies that 
	$$ 4a+4b \le 15-c + (c-5)a \le 10 ~~ \Longleftrightarrow ~~ a+b-1\le \frac{3}{2} ; $$
	but $a$ and $b$ are integers, so we have $a+b-1 < \frac{3}{2}$, as desired.
	
	When $c=0$, then $S^{a-1}F_c=\bigoplus_{i=0}^{a-1}\cO_{\p2}(2i)$. It follows that 
	$$ \Hom_{\mathbb{P}^2}(G,(S^{a-1}F_c)(b)) = \bigoplus_{i=0}^{a-1}\H0 (G(2i+b-3)); $$
	since $G$ is $\mu$-stable with slope $\frac{3}{2}$, we get that $\H0 (G(2a+b-5))=0$ provided $-(2a+b-5)\le\frac{3}{2}$, equivalently $2a+b\le\frac{7}{2}$. The inequality in display \eqref{eq:ineq2} simplifies to $8a+4b\le15-a\le14$, which is coincides with $2a+b\le\frac{7}{2}$, as desired.
\end{proof}

Finally, we argue that $\cG$ is a smooth point in its moduli space.

\begin{proposition}
	$\h1(\cG\otimes\cG^\vee)=4l(l+2)$ and $\h2(\cG\otimes\cG^\vee)=0$
\end{proposition}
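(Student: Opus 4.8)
The plan is to reduce this to the analogous computation on $\p2$, which was already carried out earlier in this section.

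First I would observe that the twists by $\cO_{X_c}(\pm\xi)$ cancel: since $\cG=\pi^*(G)\otimes\cO_{X_c}(\xi)$, we have $\cG^\vee\simeq\pi^*(G^\vee)\otimes\cO_{X_c}(-\xi)$, hence
$$ \cG\otimes\cG^\vee \simeq \pi^*(G)\otimes\pi^*(G^\vee) \simeq \pi^*\big(G\otimes G^\vee\big). $$

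Next I would use that $\pi\colon X_c\to\p2$ is a $\p1$-bundle, so that $\pi_*\cO_{X_c}=\cO_{\p2}$ and $R^j\pi_*\cO_{X_c}=0$ for $j>0$. By the projection formula the same identities hold after tensoring with $G\otimes G^\vee$, so the Leray spectral sequence of $\pi$ applied to $\pi^*(G\otimes G^\vee)$ collapses and gives $\H{i}(X_c,\cG\otimes\cG^\vee)\simeq\H{i}(\p2,G\otimes G^\vee)$ for every $i$. (Alternatively, one could dualize the sequences in displays \eqref{eq:instanton-g} and \eqref{eq:instanton-g-dual}, splice the results into a locally free resolution of $\cG\otimes\cG^\vee$ by line bundles of the form $\cO_{X_c}(kf)$, and compute directly via Lemma \ref{lem:cohomology_of_line_bundles}; the $\pi^*$ argument is shorter.)

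Finally I would invoke the facts recorded earlier in this section for the bundles $G$ on $\p2$: namely $\H2(\p2,G\otimes G^\vee)=0$ — obtained by dualizing \eqref{eq:bdl-g}, tensoring by $G$, and taking cohomology — together with $\h1(\p2,G\otimes G^\vee)=4c_2(G)-c_1(G)^2-3=4l(l+2)$ from Grothendieck--Riemann--Roch. Combined with the isomorphism of the first step, these immediately yield $\h2(\cG\otimes\cG^\vee)=0$ and $\h1(\cG\otimes\cG^\vee)=4l(l+2)$. I do not expect any real obstacle here; the only points requiring a little care are the cancellation of the $\cO_{X_c}(\pm\xi)$ twists and the vanishing $R^j\pi_*\cO_{X_c}=0$ for $j>0$, which is exactly where the hypothesis that $\pi$ is a $\p1$-bundle enters.
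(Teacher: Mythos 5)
Your proof is correct, but it takes a genuinely different route from the paper. You exploit the special shape $\cG=\pi^*(G)\otimes\cO_{X_c}(\xi)$: the twists by $\cO_{X_c}(\pm\xi)$ cancel in $\cG\otimes\cG^\vee\simeq\pi^*(G\otimes G^\vee)$, and since $R^j\pi_*\cO_{X_c}=0$ for $j>0$ the Leray spectral sequence gives $\H{i}(X_c,\cG\otimes\cG^\vee)\simeq\H{i}(\p2,G\otimes G^\vee)$ for all $i$; the two facts about $G$ recorded after display \eqref{eq:bdl-g}, namely $\h2(G\otimes G^\vee)=0$ and $\h1(G\otimes G^\vee)=4c_2(G)-c_1(G)^2-3=4l(l+2)$, then finish the argument. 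The paper instead stays on the threefold: it applies the Riemann--Roch identity of Lemma \ref{lem:dim_extEE} to get $\h1(\cG\otimes\cG^\vee)-\h2(\cG\otimes\cG^\vee)=4l(l+2)$, and separately kills $\h2(\cG\otimes\cG^\vee)$ by twisting the dual of the resolution \eqref{eq:instanton-g} by $\cG$ and reducing to line-bundle cohomology via Lemma \ref{lem:cohomology_of_line_bundles}. Your argument is shorter and in fact computes every $\h{i}(\cG\otimes\cG^\vee)$ at once, but it depends entirely on $\cG$ being a pull-back up to a $\xi$-twist; the paper's method is the one that generalizes to instanton bundles on $X_c$ not of this special form, and it reuses the machinery (Lemmas \ref{lem:dim_extEE} and \ref{lem:cohomology_of_line_bundles}) already set up for the Serre-construction bundles of Section \ref{sec:instantons on Xc}. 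It is also a useful consistency check that the two computations agree.
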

\begin{proof}
	The formula in Lemma \ref{lem:dim_extEE} yields, in the case at hand,
	$$ \h1(\cG\otimes\cG^\vee) - \h2(\cG\otimes\cG^\vee) = 10\cdot5+4(l^2+2l+3-c)+4c-62=4l(l+2). $$
	Therefore, it is enough to show that $\h2(\cG\otimes\cG^\vee)=0$.
	
	Twisting the dual of sequence in display \eqref{eq:instanton-g} by $\cG$ we obtain
	$$ 0 \lra V_{2l+1}\otimes\cG(-(l+3)f-\xi) \lra V_{2l+3}\otimes\cG(-(l+2)f-\xi)  \lra \cG\otimes\cG^\vee \lra  0, $$
	so that
	$$ \h2(\cG\otimes\cG^\vee) \le (2l+3)\cdot\h2(\cG(-(l+2)f-\xi)) + (2l+1)\cdot\h3(\cG(-(l+3)f-\xi)). $$
	Next, twist \eqref{eq:instanton-g} by $\cO_{X_c}(-(l+s)f-\xi)$ with $s=2,3$ to get
	$$ 0 \lra \cG(-(l+s)f-\xi) \lra V_{2l+3}\otimes\cO_{X_c}((2-s)f) \lra V_{2l+1}\otimes\cO_{X_c}((3-s)f) \lra  0. $$
	It follows: 
	$$ \h2(\cG(-(l+2)f-\xi)) \le (2l+1)\cdot\h{1}(\cO_{X_c}(f)) + (2l+3)\cdot\h2(\cO_{X_c}) \quad {\rm and}$$
	$$ \h3(\cG(-(l+3)f-\xi)) \le (2l+1)\cdot\h{2}(\cO_{X_c}) + (2l+3)\cdot\h3(\cO_{X_c}(-f)). $$
	Using Lemma \ref{lem:cohomology_of_line_bundles} one easily checks that
	$$ \h{1}(\cO_{X_c}(f))=\h{2}(\cO_{X_c})=\h3(\cO_{X_c}(-f))=0, $$
	thus $\h2(\cG\otimes\cG^\vee)=0$, as desired.
\end{proof}

We want to close this exposition with some open problems which are worth pondering.

\begin{open problem}
	We show in Main Theorem 1 the existence of $h$-instanton bundles with second Chern class $\alpha \xi f + \beta f^2$ for $\beta \geq 2$. However, we know that $\beta$ can take values less than 2 in principle, thanks to Lemma \ref{lem:bounds_2ndChern}. Therefore the following problem is open: to construct rank 2, orientable, $\mu$-stable $h$-instanton bundles on $(X_c,h)$ with $\beta \leq 1$.
\end{open problem}

\begin{open problem}
	The definition of $h$-instanton bundle is directly dependent on the choice of polarization of the ambient variety. In this work, we only worked with the polarization $h = \xi + f$. Therefore, similar research can be carried out with another choice of polarization for $X_c$. In particular, the existence of rank 2, orientable, $\mu$-stable $h$-instanton bundle on $(X_c,h)$ where $h$ is different from $\xi + f$ is open.
\end{open problem}

\begin{open problem}
	Similarly, the existence of $h$-instanton bundles with defect $\delta=1$ as defined in \cite[Definition 1.3]{AC23}, is also open.
\end{open problem}

\begin{open problem}
	The present article was dedicated to the existence of rank 2 $h$-instanton bundles on $(X_c,h)$. It is easy to check that extensions of $h$-instanton sheaves are also $h$-instanton, so we have also obtained the existence of $\mu$-semistable $h$-instanton bundles of even rank on $X_c$. However, the construction of $\mu$-stable $h$-instanton sheaves of arbitrary rank is still an open challenge.
\end{open problem}

\begin{open problem}
	In this work, we study orientable, rank 2 $h$-instanton bundles, which have first Chern class $4h+K_{X_c}$. However, in general, a rank 2 $h$-instanton bundle $\cE$ has the first Chern class satisfying the equation (see \cite[Theorem 1.6]{AC23})
	\[
	c_1(\cE) h^2 = 4h^3 + K_{X_c}h^2.
	\]
	Therefore, one can study rank 2 $h$-instanton bundles in general, without restricting orientable ones on a variety that has Picard rank bigger than one, like in our case. For example, there is another potentially interesting family of rank 2 locally free sheaves on $X_c$ that can be constructed from the curves $Z$:
	$$ 0 \lra \cO_{X_c} \lra \mathcal{H} \lra \cI_{Z}(\xi+f) \lra 0. $$
	Note that $\mathcal{H}=\cO_X(\xi)\oplus\cO_X(f)$ when $m=1$. However, since $c_1(\mathcal{H})=\xi+f$, there are no $a,b$ such that 
	$$ c_1(\mathcal{H}(a\xi+bf))=(2a+1)\xi+(2b+1)f = 4h+K_X. $$
	So there is no chance of producing orientable instantons for the polarization $h = \xi + f$. Even if we change the polarization, we will have the same problem for rank 2 orientable instantons. So, is it possible to find a different polarization for $X_c$ such that $\mathcal{H}$ is a non-orientable, rank 2 $h$-instanton bundle up to a twist of a line bundle?
\end{open problem}

\bigskip


\begin{bibdiv}
	\begin{biblist}
		\bib{AC23}{article}{
			title={Instanton sheaves on projective schemes},
			author={Antonelli, Vincenzo},
			author={Casnati, Gianfranco},
			journal={Journal of Pure and Applied Algebra},
			volume={227},
			number={4},
			pages={107246},
			year={2023},
			publisher={Elsevier}}
		\bib{ACG22}{article}{
			title={Even and odd instanton bundles on Fano threefolds},
			author={Antonelli, Vincenzo},
			author={Casnati, Gianfranco},
			author={Genc, Ozhan},
			journal={Asian Journal of Mathematics},
			volume={26},
			number={1},
			pages={81--118},
			year={2022},
			publisher={International Press of Boston}
		}
		\bib{AM22}{article}{
			title={H-instanton bundles on three-dimensional polarized projective varieties},
			author={Antonelli, Vincenzo},
			author={Malaspina, Francesco},
			journal={Journal of Algebra},
			volume={598},
			pages={570--607},
			year={2022},
			publisher={Elsevier}
		}   
		\bib{Ar07}{article}{
			title={A Home-Made Hartshorne-Serre Correspondence},
			author={Arrondo, Enrique},
			journal={Revista Matem{\'a}tica Complutense},
			volume={20},
			number={2},
			pages={423--443},
			year={2007}
		}
		\bib{ADHM}{article}{
			title={Construction of instantons},
			author={Atiyah, Michael F.},
			author={Hitchin, Nigel J.},
			author={Drinfeld, Vladimir G.},
			author={Manin, Yuri I.},
			journal={Physics Letters A},
			volume={65},
			number={3},
			pages={185--187},
			year={1978},
			publisher={Elsevier}
		}
		\bib{BPS80}{article}{
			title={Variation der globalen Ext in Deformationen kompakter komplexer R{\"a}ume},
			author={B{\u{a}}nic{\u{a}}, Constantin},
			author={ Putinar, Mihai },
			author={Schumacher, Georg},
			journal={Mathematische Annalen},
			volume={250},
			number={2},
			pages={135--155},
			year={1980},
			publisher={Springer}
		}
		\bib{CCGM21}{article}{
			title={Instanton Bundles on the Blowup of the Projective 3-Space at a Point},
			author={Casnati, Gianfranco},
			author={Coskun, Emre},
			author={Genc, Ozhan},
			author={Malaspina, Francesco},
			journal={Michigan Mathematical Journal},
			volume={70},
			number={4},
			pages={807--836},
			year={2021},
			publisher={University of Michigan, Department of Mathematics}
		}
		\bib{CJ}{article}{
			title={Instanton sheaves on Fano threefolds},
			author={Comaschi, Gaia},
			author ={Jardim, Marcos},
			journal={Manuscripta Math.},
			volume={175},
			pages={293--343},
			year={2024}
		}
		\bib{EH13}{book}{
			title={3264 and All That
				A Second Course in Algebraic Geometry},
			author={Eisenbud, David},
			author={Harris, Joe},
			year={2013},
			publisher={Cambridge: Cambridge University Press}
		}
		\bib{F}{article}{
			author={Faenzi, Daniele},
			title={Even and odd instanton bundles on Fano threefolds of Picard number
				one},
			journal={Manuscripta Math.},
			volume={144},
			date={2014},
			number={1-2},
			pages={199--239},
			issn={0025-2611},
		}
		\bib{Fra00}{article}{
			title={On the generation of certain bundles of the projective space},
			author={Franco, Davide},
			journal={Geometriae Dedicata},
			volume={81},
			number={1-3},
			pages={33--52},
			year={2000},
			publisher={Springer Science+ Business Media BV, Formerly Kluwer Academic Publishers BV}
		}
		\bib{GJ}{article}{
			title={Singular loci of instanton sheaves on projective space},
			author={Gargate, Michael},
			author={Jardim, Marcos},
			journal={International Journal of Mathematics},
			volume={27},
			number={07},
			pages={1640006},
			year={2016},
			publisher={World Scientific}
		}
		\bib{Har80}{article}{
			title={Stable reflexive sheaves},
			author={Hartshorne, Robin},
			journal={Mathematische annalen},
			volume={254},
			number={2},
			pages={121--176},
			year={1980},
			publisher={Springer}
		}
		
		\bib{Har77}{book}{
			title={Graduate texts in mathematics. Algebraic Geometry},
			author={Hartshorne, Robin},
			volume={52},
			year={1977},
			publisher={Springer}
		}
		\bib{HaLa}{article}{
			title={Moduli spaces of framed perverse instantons on P3},
			author={Hauzer, Marcin},
			author={Langer, Adrian},
			journal={Glasgow Mathematical Journal},
			volume={53},
			number={1},
			pages={51--96},
			year={2011},
			publisher={Cambridge University Press}
		}
		\bib{HL10}{book}{
			title={The geometry of moduli spaces of sheaves},
			author={Huybrechts, Daniel },
			author={Lehn, Manfred},
			year={2010},
			publisher={Cambridge Univ. Press}
		}
		\bib{J-inst}{article}{
			title={Instanton sheaves on complex projective spaces.},
			author={Jardim, Marcos},
			journal={Collectanea Mathematica},
			volume={57},
			pages={69--91},
			year={2006}
		}
		\bib{JMPS17}{article}{
			title={Holomorphic bundles for higher dimensional gauge theory},
			author={Jardim, Marcos},
			author={Menet, Gr{\'e}goire},
			author={Prata, Daniela M},
			author={S{\'a} Earp, Henrique N},
			journal={Bulletin of the London Mathematical Society},
			volume={49},
			number={1},
			pages={117--132},
			year={2017},
			publisher={Wiley Online Library}
		}
		\bib{K}{article}{
			author={Kuznetsov, Alexander},
			title={Instanton bundles on Fano threefolds},
			journal={Cent. Eur. J. Math.},
			volume={10},
			date={2012},
			number={4},
			pages={1198--1231},
			issn={1895-1074},
		}
		\bib{Laz17}{book}{
			title={Positivity in algebraic geometry I: Classical setting: line bundles and linear series},
			author={Lazarsfeld, Robert K},
			volume={48},
			year={2017},
			publisher={Springer}
		}
		\bib{MML20}{article}{
			title={Instanton bundles on the flag variety F (0, 1, 2)},
			author={Malaspina, Francesco},
			author={Marchesi, Simone},
			author={Pons-Llopis, Juan Francisco},
			journal={Annali della Scuola Normale Superiore di Pisa. Classe di scienze},
			volume={20},
			number={4},
			pages={1469--1505},
			year={2020},
			publisher={Classe di Scienze}
		}
		\bib{MCS}{article}{
			title={Yang-Mills fields on quaternionic spaces},
			author={Mamone Capria, Marco},
			author={Salamon, Simon},
			journal={Nonlinearity},
			volume={1},
			pages={517--530},
			year={1988}
		}
		\bib{OSSG80}{book}{
			title={Vector bundles on complex projective spaces},
			author={Okonek, Christian},
			author={Schneider, Michael},
			author={Spindler, Heinz },
			author={Gel'fand, Sergei Izrailevich},
			volume={3},
			date={1980},
			publisher={Springer}
		}
		\bib{SW90}{article}{
			title={Fano bundles of rank $2 $ on surfaces},
			author={Szurek, Micha{\l}},
			author={Wisniewski, Jaros{\l}aw A},
			journal={Compositio Mathematica},
			volume={76},
			number={1-2},
			pages={295--305},
			date={1990}
		}
		
	\end{biblist}
	
\end{bibdiv}

\bigskip
\bigskip
\noindent
Ozhan Genc,\\
Department of Mathematics and Informatics, Jagiellonian University,\\
ul. prof. Stanisława Łojasiewicza 6, \\
30-348 Krak{\'o}w, Poland\\
e-mail: {\tt ozhangenc@gmail.com}

\bigskip
\noindent
Marcos Jardim, \\
Universidade Estadual de Campinas (UNICAMP) \\
Instituto de Matemática, Estatística e Computação Científica (IMECC) \\ 
Departamento de Matemática \\
Rua Sérgio Buarque de Holanda, 651,\\
13083-859 Campinas-SP, Brazil\\ 
e-mail: {\tt jardim@unicamp.br}

\end{document}